\DeclareRobustCommand{\cev}[1]{%
  {\mathpalette\do@cev{#1}}%
}
\newcommand{\do@cev}[2]{%
  \vbox{\offinterlineskip
    \sbox\z@{$\m@th#1 x$}%
    \ialign{##\cr
      \hidewidth\reflectbox{$\m@th#1\vec{}\mkern4mu$}\hidewidth\cr
      \noalign{\kern-\ht\z@}
      $\m@th#1#2$\cr
    }%
  }%
}
\colorlet{symbols}{black}
\definecolor{connection}{rgb}{0.7,0.1,0.1}
\definecolor{purple}{rgb}{0.6,0.3,1}
\def\sp{\mathord{s\kern-0.13em p}}
\tikzset{
	noise/.style={circle,draw = black, line width = 0.1mm, fill=green!80!black, inner sep=1pt, minimum size=1mm},
    bluedot/.style={circle, fill=blue!80, inner sep=1pt, minimum size=1mm},
	root/.style={circle,draw=black,line width=0.1mm,fill=red!80,inner sep=1pt, minimum size=1mm},
	rootp/.style={circle,draw=black,line width=0.1mm,fill=purple,inner sep=1pt, minimum size=1mm},
	broot/.style={circle,draw=black,line width=0.1mm,inner sep=1pt, minimum size=1.3mm},
	innern/.style={circle,draw = black, fill=yellow,line width=0.1mm, inner
sep=1pt, minimum size=1.0mm},
    dot/.style={circle,draw=black,line width=0.1mm,fill=black,inner sep=1pt, minimum size=1mm},
    dotred/.style={circle,fill=black!50,inner sep=0pt, minimum size=2mm},
    var/.style={circle,fill=black!10,draw=black,inner sep=0pt, minimum size=3mm},
    kernel/.style={semithick,shorten >=2pt,shorten <=2pt},
    kernels/.style={snake=zigzag,shorten >=2pt,shorten <=2pt,segment amplitude=1pt,segment length=4pt,line before snake=2pt,line after snake=5pt,},
    rho/.style={densely dashed,semithick,shorten >=2pt,shorten <=2pt},
    testfcn/.style={dotted,semithick,shorten >=2pt,shorten <=2pt},
    renorm/.style={shape=circle,fill=white,inner sep=1pt},
    labl/.style={shape=rectangle,fill=white,inner sep=1pt},
	kernels2/.style={very thick,draw=connection,segment length=12pt},
	contract/.style={draw=green!70!black,segment length=2pt},
	not/.style={thin,circle,fill=symbols,draw=connection,fill=connection,inner sep=0pt,minimum size=0.5mm},
	>=stealth,
        }
\def\DeclareSymbol#1#2#3{\expandafter\gdef\csname MH@symb@#1\endcsname{\tikz[baseline=#2,scale=0.15,draw=symbols,line join=round]{#3}}\expandafter\gdef\csname MH@symb@#1s\endcsname{\scalebox{0.7}{\tikz[baseline=#2,scale=0.15,draw=symbols,line join=round]{#3}}}}
\def\<#1>{\csname MH@symb@#1\endcsname}
\DeclareSymbolFont{timesoperators}{T1}{ptm}{m}{n}
\renewcommand{\operator@font}{\mathgroup\symtimesoperators}
\numberwithin{equation}{section}
\def\cff{\mathcal{F}}
\def\ctt{\mathcal{T}}
\def\coo{\mathcal{O}}
\def\lt{\left}
\def\rt{\right}
\def\tree{\ctt}
\def\loc{\mathrm{loc}}
\def\Cloc{{C_\loc(\R\times\R^d)}}
\DeclareMathOperator{\dist}{dist} 
\DeclareMathOperator{\sgn}{sgn} 
\newcommand*{\ud}{\mathrm{\,d}}
\newcommand{\EE}{\mathbf{E}}
\newcommand{\RR}{\mathbf{R}}
\newcommand{\NN}{\mathbf{N}}
\newcommand{\ZZ}{\mathbf{Z}}
\newcommand{\TT}{\mathbf{T}}
\newcommand{\PP}{\mathbf{P}}
\newcommand{\mG}{\mathcal{G}}
\newcommand{\mN}{\mathcal{N}}
\newcommand{\mO}{\mathcal{O}}
\newcommand{\mP}{\mathcal{P}}
\newcommand{\mM}{\mathcal{M}}
\newcommand{\mT}{\mathcal{T}}
\newcommand{\mI}{\mathcal{I}}
\newcommand{\mF}{\mathcal{F}}
\newcommand{\mX}{\mathcal{X}}
\newcommand{\mE}{\mathcal{E}}
\newcommand{\mV}{\mathcal{V}}
\newcommand{\mf}[1]{\mathfrak{#1}}
\renewcommand{\l}{\ell}
\newcommand{\ve}{\varepsilon}
\let\f\frac
\def\ie{\textit{i.e.}\ }
\newcommand{\myarcb}{\begin{tikzpicture}
\draw[line width=0.35mm] (180:0.1) arc (180:0:0.1) ;
\end{tikzpicture}}
\newcommand{\myarcg}{\begin{tikzpicture}
\draw[line width=0.35mm, draw=green!70!black] (180:0.1) arc (180:0:0.1) ;
\end{tikzpicture}}
\newcommand{\carrow}{
\begin{tikzpicture}[line width = 0.1mm,scale=1.2]
\coordinate (center) at (-0.09,0.3);
\node[broot,fill=red](arrowh) at (-.09,0.3) {};
\draw[fill=yellow] (center) + (0, 0.054) arc (90:270:0.054);
\node[broot,fill=red](r) at (.09,0.3) {};
\draw[white,-{To[length=0.6mm,width=0.9mm,black]}] (0,0.382) -- (-0.04,0.372);
\draw (r) to [in = 60,out = 120] (arrowh) ;
\end{tikzpicture}
}
\newcommand{\glue}{ \, \underset{\carrow}{\sqcup} \,}
\begin{document}

\title{The  Allen--Cahn equation with generic initial datum}
\author{Martin Hairer$^1$, Khoa L\^e$^2$, and Tommaso Rosati$^1$}
\institute{Imperial College, UK, \email{\{m.hairer,t.rosati\}@imperial.ac.uk}
\and TU Berlin, Germany, \email{le@math.tu-berlin.de}}

\maketitle
\begin{abstract}
We consider the Allen--Cahn equation $\partial_t u- \Delta u=u-u^3$ with
 a rapidly mixing Gaussian field as initial condition. We show that provided that the amplitude of the initial
 condition is not too large, the
equation generates fronts described by nodal sets of the Bargmann--Fock Gaussian
field, which then evolve according to mean curvature flow. \\[.4em]
\noindent {\scriptsize \textit{Keywords:} Allen--Cahn equation, white noise, mean curvature flow, coarsening}\\
\noindent {\scriptsize\textit{MSC classification:} 60H15, 35R60, 53E10}

\end{abstract}

\section{Introduction} 
\label{sec:introduction}

The aim of the present article is to prove that nodal sets of a smooth
Gaussian field, known as the Bargmann--Fock field,
arise naturally as a random initial condition to evolution by mean curvature
flow.

This problem is related to understanding the long-time behaviour of mean
curvature flow: for a sufficiently generic initial datum composed of
clusters with typical lengthscale of order $1$, one expects that at time  $ t \gg 1 $
the clusters have coarsened in such a way that the typical lengthscale is of order $ \sqrt{t} $. In fact, one
would even expect that upon rescaling by $ \sqrt{t} $, the clusters
become self-similar / stationary at large times \cite{bray}.

An understanding of such a behaviour remains currently beyond the reach of rigorous 
analysis although upper bounds on the coarsening rate can
be proven via deterministic arguments, see e.g. \cite{otto2002} for the
related Cahn--Hilliard dynamics.
On the other hand, lower bounds on the rate of coarsening can only be expected 
to hold for sufficiently
generic initial conditions, since for degenerate initial conditions one
may not see any coarsening at all. 

This motivates the question, addressed in
the present work, of what such generic initial condition should look like.
Notably, the correlation structure we obtain is
the same as the first order approximation of the longtime two-point correlation predicted
by Ohta, Jasnow and Kawasaki \cite{OhtaKawasaki}.

A natural way to construct random initial conditions to mean curvature flow is
to consider the fronts formed by the dynamics of the Allen--Cahn equation
with white noise initial data. Unfortunately this is not feasible, since the
scaling exponent $-\f d2$ of white noise on $ \RR^{d} $, with $d \geqslant 2$, 
lies below or at the critical exponent $-1$ (or $-\f23$ depending on what one really means by ``critical''
in this context) below which one does not expect any form of local well-posedness result 
for the equation.

	Instead, we consider the following setting. Let $ \eta $ be a white
noise on $\R^d$ with $d \ge 2$ and let $\phi$ be a Schwartz test function 
	integrating to $1$. Fix an exponent $\alpha \in
(0,1)$, and
for each $\varepsilon
\in (0, 1)$ and $x\in\R^d$ define 
\begin{equ}[e:defEtax]
\phi_x^\varepsilon(y)=\varepsilon^{-d}
\phi(\varepsilon^{-1}(x-y)), \qquad \eta_\varepsilon(x)=\varepsilon^{\frac d2-
\alpha}\eta(\phi^\varepsilon_x)\;.
\end{equ}
Here, the exponents are chosen in such a way that $\phi_x^\eps$ converges to a Dirac distribution
and typical values of $\eta_\eps$ are of order $\eps^{-\alpha}$.
Our aim is to describe the limit as $ \ve \to 0 $ of the solutions to the
Allen--Cahn equation with initial datum $ \eta_{\ve} $
\begin{equation}\label{eqn:uep}
		(\partial_t- \Delta) u_\varepsilon= u_\varepsilon-u^3_\varepsilon\,,\quad u_\varepsilon(0,x)=\eta_\varepsilon(x)\;.
\end{equation}
The reason for restricting ourselves to $\alpha < 1$ is that $1$ is precisely the critical exponent
for which $\Delta \eta_\eps$ and $\eta_\eps^3$ are both of the same order of magnitude, i.e.\ when
$\alpha+2 = 3\alpha$.

For a fixed terminal time $t$, 
the behaviour of \eqref{eqn:uep} is not very interesting since one has $\eta_\varepsilon \to 0$ weakly, so one would expect to also have
$u_\eps\to 0$. This is trivially the case for $\alpha < \f23$ since one has $\eta_\eps \to 0$ in $\CC^\beta$ for every
$\beta < -\alpha$ and the Allen--Cahn equation is well-posed for arbitrary initial data in $\CC^\beta$ 
if (and actually only if) $\beta > -\f23$.
As a consequence of Proposition~\ref{prop:uuN} below, we will see that it \textit{is} still the case 
that $u_\eps(t)\to 0$ at fixed $t$ for any exponent
$\alpha < 1$, but this is a consequence of more subtle stochastic cancellations.

It turns out that the natural time and length scales at which one does see non-trivial behaviour are given by
\begin{equ}
		T_\varepsilon= \Bigl({d\over 2} - \alpha\Bigr) \log \varepsilon^{-1}\;,\qquad
		L_\varepsilon=\sqrt{T_\eps}\,.
\end{equ}
The main result of this article is that for $ \sigma > 1 $, $ u ( \sigma T_{\ve}, x L_{\ve}) \to
\pm 1 $ on sets $ \Omega_{\sigma}^{\pm 1} $, which evolve under mean curvature
flow.
The initial data (at time $\sigma = 1$) for that flow is given by the nodal domains of a centred
Gaussian field $\{\Psi_{1}(x):x\in\R^d \}$ with Gaussian correlation function, also
known as the Bargmann--Fock field:
\begin{equation}\label{eqn:def-Psi}
		\E[ \Psi_{\sigma}(x)\Psi_{\sigma}(y)]= \sigma^{- \frac{d}{2}
}\exp\lt(-\frac{|x-y|^2}{8 \sigma} \rt), \qquad \sigma > 0\,.
	\end{equation}
In what follows we will write $ \Psi $ short for $ \Psi_{1}.$
This Gaussian field emerges from the linearisation near zero of
\eqref{eqn:uep}, within a time interval of order $1$ (in the original variables) around 
$t_{\star} (\ve) $ with
\begin{equ}[eqn:t-star]
t_{\star} (\ve) \eqdef T_\eps + \tau_\star(\ve)\;,\quad \tau_\star(\ve) \eqdef \f d4 \log \log \eps^{-1} + \mf{c}, \quad \mf{c}
\eqdef \frac{d}{4} \log{ \big( 4 \pi (d - 2 \alpha) \big)}\;,
\end{equ}
where $ \mf{c} $ is chosen to make certain expressions look nicer.
In fact, roughly up to time $t_{\star} (\ve) $ the
dynamics of $ u_{\ve} $ is governed by the linearisation of
\eqref{eqn:uep} around the origin, but at that time the solution becomes of order $ 1
$ and starts to follow the flow $ \Phi \colon \RR \times \RR \to
[-1, 1] $ of the nonlinear ODE
	\begin{equ}\label{eqn:Phi-new}
		\partial_t \Phi=\Phi-\Phi^3, \qquad \lim_{t \to -\infty}
e^{-t}\Phi(t,u) = u,
	\end{equ}
which is given by
\begin{equ}
	\Phi(t,u)=\frac{u}{\sqrt{e^{-2t} + u^2}}\;.
\end{equ}
We will show that a good approximation of $ u_{\ve}(t_{\star}(\ve), x
L_{\ve})$ is given by $ \Phi(0, \Psi(x)) $.

Then, after an additional time of order $ T_{\ve} $, $ u_{\ve} $ is finally governed by the large
scale behaviour of the
Allen--Cahn equation with random initial condition $ \Phi (0, \Psi) $, namely
by the mean curvature flow evolution of the set $ \{ \Psi =0 \} $.
To capture the different time scales we introduce the process
\begin{equ}\label{eqn:capital-U}
U_{\ve}(\sigma, x) \eqdef 
(e^{(1-\sigma) T_{\ve}}\vee 1)
u_{\ve}( \sigma T_{\ve} + \tau_{\star}(\ve), x L_{\ve})\;,
\end{equ}
and we summarise our results in the theorem below.
\begin{theorem}\label{thm:intro}
The process $ U_{\ve} $ converges in law as follows for $ \ve \to 0 $
\begin{equs}
U_{\ve}(\sigma, x) & \to \Psi_{\sigma}(x), \quad  & \sigma \in (0, 1)&, x \in
\RR^{d}\;, \\
U_{\ve}( 1 + t/T_\eps, x ) & \to \Phi( t , \Psi(x)), \quad
& t \in \RR&, x \in \RR^{d}\;.
\end{equs}
The convergences hold for fixed $t$ or $\sigma$ and locally uniformly over the variable $x$.

Denote furthermore by $ \Omega^{+}_{\sigma}, \Omega^{-}_{\sigma}$ respectively the mean
curvature flow evolutions of the sets $ \Omega^{+}_{1} = \{ x  \ \colon \
\Psi(x) > 0 \}, \ \Omega^{-}_{1} = \{ x  \ \colon \ \Psi(x) < 0 \}$. Then,
there exist a coupling between $\{U_\ve\}_{\ve \in (0, 1)}$ and $\Psi$ such that, for every $\sigma > 1$,
one has 
$\lim_{\eps \to 0} U_{\ve}(\sigma, x) = \pm 1$ locally uniformly for $x \in \Omega^{\pm}_{\sigma}$, in probability.
\end{theorem}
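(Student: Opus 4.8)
\emph{Proof sketch.} We argue along the three time windows of the statement, each with its own rescaling; throughout $P_t\eqdef e^{t\Delta}$ and we use the mild formulation $u_\ve(t)=e^{t}P_t\eta_\ve-\int_0^t e^{t-s}P_{t-s}u_\ve^3(s)\,ds$ of \eqref{eqn:uep}.

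\emph{Linear window $\sigma\in(0,1)$.} Let $L_t\eqdef e^{t}P_t\eta_\ve$ be the Gaussian (linear) part of $u_\ve$ and $\ell_\ve(\sigma,\cdot)\eqdef e^{(1-\sigma)T_\ve}L_{\sigma T_\ve+\tau_\star}(\,\cdot\,L_\ve)$ its rescaling. Since $\ve\ll\sqrt t$ and $\int\phi=1$, a short computation gives $\E[L_t(xL_\ve)L_t(yL_\ve)]=e^{2t}\ve^{d-2\alpha}\,p_{2t}\bigl((x-y)L_\ve\bigr)\bigl(1+o(1)\bigr)$ with $p_t$ the heat kernel; inserting $t=\sigma T_\ve+\tau_\star(\ve)$, multiplying by the square of the prefactor in \eqref{eqn:capital-U}, and using $L_\ve^2=T_\ve$, $T_\ve=(\tfrac d2-\alpha)\log\ve^{-1}$ and the value of $\mf c$ in \eqref{eqn:t-star}, every power of $\ve$, of $\log\ve^{-1}$ and every remaining constant cancels, leaving
\[
\E\bigl[\ell_\ve(\sigma,x)\,\ell_\ve(\sigma,y)\bigr]\;\longrightarrow\;\sigma^{-d/2}\exp\!\Bigl(-\tfrac{|x-y|^2}{8\sigma}\Bigr),
\]
the covariance \eqref{eqn:def-Psi} of $\Psi_\sigma$. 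Being Gaussian, $\ell_\ve(\sigma,\cdot)\to\Psi_\sigma$ in law, locally uniformly in $x$ by Kolmogorov's criterion applied to the (explicit) moments of its spatial increments. Finally Proposition~\ref{prop:uuN}, which compares $u_\ve$ with a finite-chaos expansion, shows that $u_\ve$ stays of size $e^{-(1-\sigma)T_\ve+o(T_\ve)}$ on $[0,\sigma T_\ve+\tau_\star]$, so that $U_\ve(\sigma,\cdot)-\ell_\ve(\sigma,\cdot)$ --- which is $e^{(1-\sigma)T_\ve}$ times a cubic Duhamel integral of a quantity a factor $e^{-2(1-\sigma)T_\ve}$ smaller --- tends to $0$; the first convergence follows.

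\emph{Inner layer $\sigma=1+t/T_\ve$.} Put $w_\ve(t,x)\eqdef u_\ve(t_\star(\ve)+t,xL_\ve)$, which solves $\partial_t w_\ve=T_\ve^{-1}\Delta_x w_\ve+w_\ve-w_\ve^3$. Fix $t_1<0$ large. Running the covariance computation above at heat time $t_\star+t_1\sim T_\ve$ (that is, at $\sigma=1+t_1/T_\ve\to1$) gives $e^{t_\star}P_{t_\star+t_1}\eta_\ve(\,\cdot\,L_\ve)\to\Psi$, and since by Proposition~\ref{prop:uuN} $u_\ve$ is still small and the cubic term negligible on $[0,t_\star+t_1]$, this yields $w_\ve(t_1,\cdot)\to e^{t_1}\Psi$ locally uniformly in probability. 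For $t\ge t_1$ we compare $w_\ve$ with $q(t,\cdot)\eqdef\Phi(t-t_1,e^{t_1}\Psi)$, the solution of the pure ODE $\partial_t q=q-q^3$ started at $e^{t_1}\Psi$: $u_\ve$ is uniformly bounded for $t\ge t_1$ (as $t_\star\to\infty$), so parabolic regularity bounds $\Delta_x w_\ve$ on compacts, $q$ and its first two $x$-derivatives are bounded on compacts uniformly in $t_1$ (near a zero $x_0$ of $\Psi$ one has $q(t,x)\approx\Phi(t,\nabla\Psi(x_0)\cdot(x-x_0))$, independent of $t_1$), and $w_\ve-q$ solves a linear equation whose only large datum sits on the lateral boundary; a Gr\"onwall/barrier estimate, in which the effective diffusion length $\sqrt{(t-t_1)/T_\ve}\to0$ screens off that boundary, forces $w_\ve\to q$ locally uniformly in $(t,x)$, for $t\ge t_1$. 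Finally $q(t,\cdot)=\Phi(t-t_1,e^{t_1}\Psi)\to\Phi(t,\Psi)$ locally uniformly as $t_1\to-\infty$, by the flow identity $\Phi(t,u)=\Phi(t-t_1,\Phi(t_1,u))$ together with $\Phi(t_1,u)-e^{t_1}u=O(e^{3t_1}u^3)$; hence $u_\ve(t_\star(\ve)+t,\,\cdot\,L_\ve)\to\Phi(t,\Psi)$, which in view of \eqref{eqn:capital-U} is the second convergence.

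\emph{Mean-curvature-flow window $\sigma>1$.} For $\sigma>1$ set $v_\ve(\sigma,x)\eqdef u_\ve(t_\star(\ve)+(\sigma-1)T_\ve,xL_\ve)$, so that $U_\ve(\sigma,\cdot)=v_\ve(\sigma,\cdot)$ and $v_\ve$ solves the Allen--Cahn equation $\partial_\sigma v_\ve=\Delta_x v_\ve+T_\ve(v_\ve-v_\ve^3)$ with front width $T_\ve^{-1/2}\to0$. By the inner-layer result at $t=0$, $v_\ve(1,\cdot)\to\Phi(0,\Psi)=\Psi/\sqrt{1+\Psi^2}$ locally uniformly in probability; this limit is bounded, sign-preserving and vanishes only on the a.s.\ smooth nodal hypersurface $\{\Psi=0\}$ ($0$ being a.s.\ a regular value, by Bulinskaya's lemma), so for $\ve$ small $v_\ve(1,\cdot)$ has the sign of $\Psi$ with $|v_\ve(1,\cdot)|$ bounded below on every compact avoiding $\{\Psi=0\}$. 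Conditionally on $\Psi$ these are precisely the hypotheses of the classical theory of convergence of the Allen--Cahn equation towards generalised motion by mean curvature in the level-set (viscosity) formulation: a ``generation of interface'' estimate shows that after an additional rescaled time $O\bigl(T_\ve^{-1}\log T_\ve\bigr)\to0$ the profile relaxes to the $\tanh$-type front attached to $\{\Psi=0\}$, and a ``propagation'' estimate based on sub- and super-solutions built from the signed distance to the level-set flow --- localised in space to absorb the non-compactness of the interface and the fact that $v_\ve(1,\cdot)$ is controlled only on compacts --- gives $v_\ve(\sigma,x)\to\pm1$ for $x$ in the open sets $\Omega^\pm_\sigma$. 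Since $\{\Psi=0\}$ is Lebesgue-null, integrating out $\Psi$ under the coupling in which $\Psi$ is the field produced by the first two windows yields the stated convergence in probability, locally uniformly on $\Omega^\pm_\sigma$, for every $\sigma>1$.

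\emph{Main obstacle.} The delicate step is the passage from the stochastic window to the deterministic ones: one must control $u_\ve$ over the long interval $[0,t_\star(\ve)]$, on which its amplitude first collapses from $\ve^{-\alpha}$-type values and then climbs back to order one, and on which the apparently negligible cubic term is --- through the stochastic cancellations of Proposition~\ref{prop:uuN} --- precisely what keeps $u_\ve$ from exploding and what selects the \emph{exact} field $\Psi$ rather than a renormalised variant. Turning this into a quantitative bound on $u_\ve(t_\star,\,\cdot\,L_\ve)-\Phi(0,\Psi)$ that, away from the random nodal set $\{\Psi=0\}$, is small enough to feed into the Allen--Cahn--to--mean-curvature-flow machinery --- which must itself be used in an all-time, generalised (level-set) formulation because $\{\Psi=0\}$ will in general become singular under the flow --- is where the bulk of the work lies.
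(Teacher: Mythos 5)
Your overall strategy coincides with the paper's: a Gaussian/linear window handled through chaos (Wild-expansion) estimates, an order-one window where the ODE flow $\Phi$ takes over, and a mean-curvature-flow window treated by sub-/supersolutions and the level-set (viscosity) theory under a Skorokhod-type coupling. The one genuinely different ingredient is your inner-layer argument: you compare $w_\varepsilon(t,x)=u_\varepsilon(t_\star+t,xL_\varepsilon)$ directly with the spatially frozen ODE started at a fixed time $t_1\ll0$ and then let $t_1\to-\infty$, exploiting the vanishing rescaled viscosity $T_\varepsilon^{-1}$, whereas the paper composes the exact flow $\overline{\Phi}$ with the linearly propagated truncated expansion ($w^{N,\kappa}_\varepsilon$ in Proposition~\ref{prop:d3}) and estimates the commutator $f^N_\varepsilon\lesssim|\nabla P^1_{t-t_1}u^N_\varepsilon|^2$ by the gradient moment bounds. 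Your variant could work, but not as written: the claim that ``parabolic regularity bounds $\Delta_x w_\varepsilon$ on compacts'' is unjustified, because the rescaled equation has viscosity $T_\varepsilon^{-1}\to0$ and a uniform bound on $\Delta_x w_\varepsilon$ is equivalent to $\Delta u_\varepsilon=O(T_\varepsilon^{-1})$ at unit scale, i.e.\ precisely the quantitative flatness of the solution at scale $L_\varepsilon$ that has to be proven (and which the paper extracts from \eqref{est:mPDXtau}). The comparison must instead be arranged so that the Laplacian falls on the smooth limit $q$, keeping $T_\varepsilon^{-1}\Delta$ on the difference and using the heat-kernel localisation you allude to.

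The more serious gap is the intermediate time regime, which is exactly the ``main obstacle'' you name but do not resolve. You invoke Proposition~\ref{prop:uuN} to assert that $u_\varepsilon$ stays small and the cubic Duhamel term is negligible on $[0,\sigma T_\varepsilon+\tau_\star]$ and on $[0,t_\star+t_1]$; but that proposition is only valid for $t\le(1-\alpha)\log\varepsilon^{-1}$, and even there its remainder $\varepsilon^{2-3\alpha}e^{3t}(\cdots)^{N-1}$ degenerates as $t$ approaches $(1-\alpha)\log\varepsilon^{-1}$, which is well below $\sigma T_\varepsilon$ for $\sigma$ close to $1$ (and below $t_\star$) whenever $d\ge2$. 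On the window where the linear term inflates the solution from $\varepsilon^{\theta}$ up to almost order one, no a priori bound on $u_\varepsilon$ itself is available to you, so the self-referential estimate ``the cubic Duhamel integral of a quantity a factor $e^{-2(1-\sigma)T_\varepsilon}$ smaller'' is circular. The paper closes this by the sign-definiteness of the nonlinearity: Lemma~\ref{lem:abs.v} bounds $|u_\varepsilon-P^1_{t-t_1}u^N_\varepsilon(t_1)|$ by the heat propagation of the initial error plus the Duhamel integral of the cube of the \emph{linear} propagation, whose moments are controlled by Proposition~\ref{prop:mPXtau}; making the resulting error vanish is what dictates the time $t_2$ with its $-\tfrac12\log\log\varepsilon^{-1}$ correction (Lemma~\ref{lem:S}). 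Without this (or an equivalent bootstrap), both your first window for $\sigma\in\big[\tfrac{1-\alpha}{d/2-\alpha},1\big)$ and the input $w_\varepsilon(t_1,\cdot)\to e^{t_1}\Psi$ for your inner layer are unproven. For the $\sigma>1$ window your plan matches the paper's (localised sub-/supersolutions plus \cite[Theorem 4.1]{barles1998} and the coupling of Lemma~\ref{lem:prob-space}), though note that passing from data controlled only locally uniformly to the level-set flow of $\Psi$ requires the additional cutoff-and-limit construction (and the renormalisation restoring uniform continuity of the level-set initial condition) carried out in Proposition~\ref{prop:conv-mcf}, which your sketch leaves implicit.
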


\begin{remark}\label{rem:fattening}
Although level set solutions to mean curvature flow with continuous initial
datum are unique, they might exhibit fattening, meaning that the evolution of the zero level set $
\Gamma_{\sigma} = \RR^d \setminus (\Omega^{+}_{\sigma} \cup \Omega^{-}_{\sigma}) $ might have a non-empty
interior for some $\sigma > 1$. In dimension $ d=2 $ this does not happen, as the initial interface is
composed of disjoint smooth curves which evolve under classical mean curvature
flow, c.f.\ Corollary~\ref{cor:2D-mcf}. In dimension $ d \geqslant 3 $ this
picture is less clear, and in addition the initial nodal set was recently
proven to contain unbounded
connected components \cite{duminil}.
\end{remark}

\subsection{Structure of the paper}

The rest of this article is divided as follows. In Section~\ref{sec:main-results}, we
reformulate Theorem~\ref{thm:intro} more precisely and we provide its proof, taking for granted the
more technical results given in subsequent sections. 
In
Section~\ref{sec:wild-expansion} we study a Wild expansion of the solution to
\eqref{eqn:uep} which allows us to take advantage of stochastic cancellations to control the
solutions on a relatively short initial time interval of length smaller than $(1 - \alpha)
\log{\ve^{-1}}$. 
In Section~\ref{sec:weak_convergence} we then show how the
Bargmann--Fock field arises from this expansion and we track solutions up to timescales of order $t_\star$. 
In Section~\ref{sec:convergence-mcf}, we finally conclude by showing that our estimates are sufficiently
tight to allows us to patch into existing results on the convergence to mean curvature flow on
large scales.

\subsection{Notations}
For a set $ \mathcal{X} $ and two functions $ f, g \colon \mX
\to \RR $, we write $ f \lesssim g $ if there exists a constant $ c > 0 $ such
that $ f(x) \leqslant c g(x) $ for all $ x \in \mX $. If $ \mX $ is a locally
compact metric space we define $C_\loc(\mX) $ as the space of continuous and
locally bounded functions, equipped with the topology of uniform convergence on
compact sets. We will write $ P_{t},
P^{1}_{t} $ for the following semigroups on $\RR^{d}$:
\begin{equ}
P_{t} \varphi = e^{t \Delta} \varphi, \qquad P^{1}_{t} \varphi = e^{t(\Delta +
1)} \varphi, \quad t \geqslant 0.
\end{equ}
We denote with $ \sgn \colon \RR \to \{ -1, 0, 1 \} $ the sign function, with
the convention that $ \sgn(0) = 0$.

\subsection*{Acknowledgements}

{\small
MH gratefully acknowledges support from the Royal Society through a research
professorship, grant number RP\textbackslash R1\textbackslash 191065.
}

\section{Main results}\label{sec:main-results}

In what follows we consider $ (t, x) \mapsto u_{\ve} (t, x) $ as a process on $ \RR \times
\RR^{d} $ by defining $ u_{\ve}(t, x) = u_{\ve}(0, x) $ for $ t < 0
$. We then show the first part of Theorem~\ref{thm:intro}, namely
\begin{theorem}\label{thm:convergvence-Gaussian}
Let $ u_{\ve} $ solve \eqref{eqn:uep} and $\Psi_{\sigma}, \Phi, U_{\ve}$ be
given respectively by \eqref{eqn:def-Psi}, \eqref{eqn:Phi-new} and
\eqref{eqn:capital-U}. Then
\begin{enumerate}
\item For any $ \sigma \in (0, 1) $ the process $ \{ U_{\ve} (\sigma, x)  \ \colon \
x \in \RR^{d} \} $ converges locally uniformly in law to $ \{ \Psi_{\sigma}
(x)  \ \colon \ x \in \RR^{d} \} $.
\item  The process $\{u_\varepsilon(t_\star(\varepsilon)+t, x
L_{\ve}):(t,x) \in \R \times \R^d \}$ converges locally uniformly in law to
$\lt\{\Phi\left(t,
\Psi(x) \right):(t,x) \in \R \times \R^d \rt\}$.
\end{enumerate}
\end{theorem}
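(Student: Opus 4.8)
The plan is to exploit the fact that on the relevant time scales the nonlinearity $-u_\ve^3$ is a genuine perturbation, so that the dynamics is essentially linear up to time $t_\star(\ve)$ and then passes through the ODE flow $\Phi$. Write $v_\ve$ for the solution of the linearised equation $(\partial_t - \Delta)v_\ve = v_\ve$ with $v_\ve(0) = \eta_\ve$, i.e.\ $v_\ve(t) = P^1_t \eta_\ve$. The first step is a \emph{Gaussian computation}: since $\eta_\ve = \ve^{d/2-\alpha}\eta(\phi^\ve_\cdot)$ is a centred Gaussian field, so is $v_\ve(t,\cdot)$, and one computes its covariance explicitly. At time $\sigma T_\ve$ and spatial scale $L_\ve$, using $e^{t\Delta}$ has Gaussian kernel with variance $2t$ and that $\phi$ integrates to $1$ (so $\phi^\ve_x \to \delta_x$), one finds
\begin{equ}
\E\bigl[ e^{(1-\sigma)T_\ve} v_\ve(\sigma T_\ve, xL_\ve)\, e^{(1-\sigma)T_\ve} v_\ve(\sigma T_\ve, yL_\ve)\bigr] \to \sigma^{-d/2}\exp\Bigl(-\frac{|x-y|^2}{8\sigma}\Bigr)\;,
\end{equ}
which is precisely \eqref{eqn:def-Psi}; the normalisation $e^{(1-\sigma)T_\ve}$ together with the choice of $T_\ve$ is what makes the prefactor $\ve^{d-2\alpha}\cdot(\text{kernel mass})$ converge to $\sigma^{-d/2}$. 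Since convergence of covariances of Gaussian fields implies convergence in law locally uniformly (tightness in $C_\loc$ following from the Kolmogorov criterion applied to spatial increments, uniformly in $\ve$), this yields $e^{(1-\sigma)T_\ve}v_\ve(\sigma T_\ve, \cdot L_\ve) \to \Psi_\sigma$ for $\sigma \in (0,1)$, and similarly at time $t_\star(\ve)$ one gets $v_\ve(t_\star(\ve), \cdot L_\ve) \to \Psi$ (here the extra time shift $\tau_\star(\ve)$ and the constant $\mf c$ are tuned so the variance is exactly $1$).

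The second step is to control the difference $u_\ve - v_\ve$. Here I would invoke the Wild-expansion machinery of Section~\ref{sec:wild-expansion} and the bounds of Section~\ref{sec:weak_convergence} (Proposition~\ref{prop:uuN} and its successors): the point is that the solution stays of size $\ve^{-\alpha+o(1)} \cdot e^{t}$ (in the linear regime) so that the cubic correction is smaller by a factor $\ve^{2\alpha}$ times logarithmic corrections, and the stochastic cancellations captured by the Wild expansion show that $u_\ve - v_\ve \to 0$ (suitably rescaled) up to time $t_\star(\ve)$. Concretely, one shows $e^{(1-\sigma)T_\ve}\bigl(u_\ve(\sigma T_\ve, \cdot L_\ve) - v_\ve(\sigma T_\ve, \cdot L_\ve)\bigr) \to 0$ in probability locally uniformly, which combined with step one gives part~(1). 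For part~(2), at time $t_\star(\ve)$ the solution $v_\ve$ is of order $1$, and one uses the defining property of $\Phi$ in \eqref{eqn:Phi-new}, namely $\lim_{t\to-\infty}e^{-t}\Phi(t,u) = u$: on the window $[t_\star(\ve) - K, t_\star(\ve)]$ one compares $u_\ve$ with $\Phi(\cdot - (t_\star(\ve) - K), \cdot)$ started from the rescaled linear solution at time $t_\star(\ve)-K$, which is well-approximated by $e^{-K}\Psi$; letting $K \to \infty$ after $\ve \to 0$ and using continuity of $\Phi$ and the semigroup property $\Phi(s+t,u) = \Phi(s,\Phi(t,u))$ closes the argument. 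The diffusion over the window $[t_\star(\ve)-K, t_\star(\ve)+t]$ acts only on scale $\sqrt{K+t} \ll L_\ve$, hence is asymptotically trivial after rescaling by $L_\ve$.

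The main obstacle is the second step --- showing $u_\ve - v_\ve$ is negligible \emph{uniformly} over the long time interval $[0, t_\star(\ve)]$, whose length is of order $\log\ve^{-1}$. A naive Duhamel/Gronwall estimate loses an exponential factor $e^{C t_\star(\ve)} = \ve^{-C}$ which destroys the $\ve^{2\alpha}$ gain from the nonlinearity. This is exactly why the Wild expansion is needed: one must resum the Duhamel iteration and use that the relevant stochastic objects (trees with several noise leaves) have variances carrying enough negative powers of $\ve$ to beat the polynomial blow-up in time, the delicate accounting being precisely that the number of noises in each tree controls its decay. A secondary technical point is upgrading convergence in law of the Gaussian fields to a form compatible with the (in-probability) statement for $u_\ve$, which is handled by Skorokhod representation on the separable space $C_\loc(\R^d)$, or by proving the stronger statement that $u_\ve - v_\ve \to 0$ in probability while $v_\ve$ converges in law, so that the pair converges jointly.
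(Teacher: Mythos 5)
Your plan for part (1) is essentially the paper's: the Gaussian covariance computation for the linearised solution is Lemma~\ref{lem:Pu1convg}(i), and the control of $u_\ve$ minus its (Wild-corrected) linear approximation over the long time interval is exactly what Proposition~\ref{prop:uuN} (up to $t_1=(\bar\alpha-\alpha)\log\ve^{-1}$) together with Lemma~\ref{lem:S} (restarting at $t_1$ and covering $[t_1,t_2^\kappa]$) provides; you correctly identify that a naive Gronwall bound loses $\ve^{-C}$ and that the tree-level moment bounds are the essential input, and the ``in probability + in law'' combination is also how the paper concludes. Two imprecisions should be fixed, though: your displayed covariance is stated at time $\sigma T_\ve$, whereas without the shift $\tau_\star(\ve)$ from \eqref{eqn:t-star} (which is built into \eqref{eqn:capital-U}) the limit is $0$, not $\sigma^{-d/2}e^{-|x-y|^2/(8\sigma)}$; and the literal claim that $e^{(1-\sigma)T_\ve}(u_\ve-v_\ve)\to0$ ``up to time $t_\star(\ve)$'' is false at $t_\star$ itself, where the two limits are $\Phi(0,\Psi)=\Psi/\sqrt{1+\Psi^2}$ and $\Psi$; it holds for fixed $\sigma<1$, and at $t_\star-K$ only with an error of order $e^{-3K}$ that is uniform in $\ve$ but does not vanish with $\ve$ (your fixed-$K$ scheme does accommodate this, but the statement as written would wrongly give $u_\ve(t_\star,\cdot L_\ve)\to\Psi$).

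For part (2) your route genuinely differs from the paper's. The paper sets $w^{N,\kappa}_\ve(t,x)=\overline{\Phi}\big(t-t_2^\kappa,\,e^{-(t-t_2^\kappa)}P^1_{t-t_1}u^N_\ve(t_1,x)\big)$, i.e.\ it composes the ODE flow with the linearly propagated Wild expansion over the growing window $[t_2^\kappa,t_\star^\kappa+t]$ of length of order $\log\log\ve^{-1}$, and Proposition~\ref{prop:d3} controls the discrepancy through the commutator term $f^N_\ve\lesssim|\nabla P^1_{t-t_1}u^N_\ve(t_1,\cdot)|^2$, estimated via the gradient moment bounds of Proposition~\ref{prop:mPXtau}; Theorem~\ref{thm:convergvence-Gaussian}(2) then follows by inserting Lemma~\ref{lem:Pu1convg}. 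You instead propose a window of fixed length $K$ before $t_\star$, the iterated limit $\ve\to0$ then $K\to\infty$, and the heuristic that diffusion only acts on scale $\sqrt{K+t}\ll L_\ve$. This can be made to work for the present theorem (the stability of $u\mapsto\overline{\Phi}(K+t,u)$ under $O(e^{-3K})$ perturbations at scale $e^{-K}$ is harmless since $\partial_u\overline{\Phi}(K+t,u)\lesssim e^{K+t}$ there), and it buys a softer argument that avoids the $\log\log$ bookkeeping; but the step you leave heuristic --- a quantitative PDE-versus-ODE comparison on the window, requiring uniform-in-$\ve$ gradient or modulus-of-continuity control of $u_\ve(t_\star-K,\cdot)$ at scale $L_\ve$ plus a comparison/Gronwall estimate costing only $e^{C(K+t)}$ --- is exactly where the paper's effort goes, and the paper's stronger approximation (valid on $\log\log\ve^{-1}$ windows, uniformly for $\kappa<\tfrac14$) is reused later for the front-formation results, so the fixed-$K$ shortcut would not substitute for Proposition~\ref{prop:d3} elsewhere in the paper.
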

\begin{proof}

The first statement follows, for $\sigma \in \big(0, \frac{1 -\alpha}{ d/2 -
\alpha } \big)$, from similar calculations as in Lemma~\ref{lem:Pu1convg} in
combination with Proposition~\ref{prop:uuN}. For $\sigma \in \big[\frac{1 -\alpha}{ d/2 -
\alpha }, 1 \big) $ it follows again by similar calculations as in
Lemma~\ref{lem:Pu1convg}, this time in combination with Lemma~\ref{lem:S}.
Let us pass to the second statement. For some $ \overline{\alpha} \in (\alpha, 1) $ consider 
\[ t_{1}(\ve) = ( \overline{\alpha} - \alpha) \log{ \ve^{-1}}\;, \qquad
t_{2}(\ve) = T_\eps -
\frac{1}{2} \log{ \log{ \ve^{-1}}}\;.\]
		It is shown in Proposition~\ref{prop:d3} that the limit as
$\eps \to 0$ of the process 
$(t,x)\mapsto u_\varepsilon(t_\star+t,L_\varepsilon x) $ is identical, in
probability, to that of $(t,x)\mapsto w^N_\varepsilon(t_\star+t,L_\varepsilon x)$ for some fixed $N$ sufficiently large.
		From the definition of $ w^{N}_{\ve} $, we have
		\begin{equation*}
			w^N_\varepsilon(t,x) = 
			\frac{P^1_{t-t_1}u^N_\varepsilon(t_1,x)}{\(1+(1-e^{-2(t-t_2)})(P^1_{t-t_1}u^N_\varepsilon(t_1,x))^2 \)^{1/2}}\;,
		\end{equation*}
where $ u^{N}_{\ve} $ is the Wild expansion truncated at level $N$ given by \eqref{def:uN}.
		Applying Lemma~\ref{lem:Pu1convg}, we see that the process $(t,x)\mapsto P^1_{t_\star+t-t_1}u^N_\varepsilon(t_1,L_\varepsilon x) $ converges to $(t,x)\mapsto e^t \Psi(x)$ in $\Cloc$ in distribution. 
		Since furthermore $t_\star \gg t_1$, it follows that the process $(t,x)\mapsto  w^N_\varepsilon(t_\star+t,L_\varepsilon x)$ converges in $\Cloc$ in distribution to the process
		\begin{equation*}
			(t,x)\mapsto \frac{ e^{t}\Psi(x)}{\(1+ e^{2 t}\Psi(x)^2 \)^{1/2}} = 
			\Phi(t, \Psi(x))\;,
		\end{equation*}
thus concluding the proof.
	\end{proof}
We now turn to the proof of the second part of Theorem~\ref{thm:intro}. This is trickier to formulate
due to the potential fattening of level sets already alluded to in
Remark~\ref{rem:fattening}. 
In particular, let us denote by $ (\sigma, x) \mapsto v(f ; \sigma, x) \in
\{ -1, 0, 1 \}$ the sign of the level set solution to mean curvature flow
associated to the initial interface $
\{ f = 0 \} $ in the sense of Definition~\ref{def:mcf}, with the
difference that the initial condition is given at time $ \sigma =1 $ in the
present scale. We will then
write $ \Gamma $ for the random interface $ \Gamma = \{ (\sigma, x) \in [1, \infty)
\times\RR^{d}
\ \colon \ v(\Psi ; \sigma, x) = 0 \} \subseteq \RR^{d+1} $. In addition, in
order to define locally uniform convergence in the complement of $ \Gamma $, 
let us introduce for any $ \delta \in (0, 1) $ the random sets
\begin{equs}[eqn:K-sets]
K_{\delta} & = \{ z = (\sigma, x) \in (1, \infty) \times \RR^{d}  \, \colon  \, | z | \leqslant
\delta^{-1}, \ \sigma > 1 + \delta,  \ d(z, \Gamma) \geqslant \delta\}\\
K_{\delta}^{1} & = \{ x \in \RR^{d}  \ \colon \ | x | \leqslant \delta^{-1},
\ d(x, \Gamma_{1}) \leqslant \delta \}\;.
\end{equs}
Here $ d(p, X) $ is the usual point-to-set distance. Furthermore, we can
define the (random) norms
\begin{equs}
\| f \|_{K_{\delta}} = \sup_{(\sigma, x) \in K_{\delta}} | f(\sigma, x) |
\end{equs}
and analogously for $ K_{\delta}^{1} $.
With these definitions at hand, the next
result describes the formation of the initial interface, which appears if we
wait and additional time of order $ \log{ \log{ \ve^{-1}}} $. Hence we define,
for $ \kappa > 0$:
\begin{equ}[eqn:t-star-kappa]
t^\kappa_\star(\varepsilon) \eqdef t_\star(\ve) +\kappa \log\log \varepsilon^{-1} \,.
\end{equ}

	\begin{proposition}\label{prop:front-formation}
Consider any sequence of times $ \{ t(\ve) \}_{\ve \in (0, 1)} \subseteq (0,
\infty) $ such that for some $ \kappa >0 $
\begin{equs}
\liminf_{\ve \to 0} \{ t (\ve) - t^{\kappa}_{\star}(\ve) \} \geqslant 0 \;, \qquad
\limsup_{\ve \to 0} \frac{t(\ve) - t_{\star}(\ve)}{T_{\ve}} = 0\;.
\end{equs}
Then there exists a coupling between $ u_{\ve} $ and
$ \Psi $ such that for any $ \delta, \zeta \in (0, 1) $ 
\begin{equs}
\lim_{\ve \to 0} \PP \( \| u_\varepsilon(t(\varepsilon), \cdot L_{\ve}) -
\sgn(\Psi( \cdot )) \|_{K^{1}_{\delta}} > \zeta \) = 0\;.
\end{equs}
	\end{proposition}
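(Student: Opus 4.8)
The plan is to read off the result from Theorem~\ref{thm:convergvence-Gaussian} and its proof, the only genuinely new point being that the terminal time is allowed to drift slowly away from $t_\star(\ve)$. Set $s(\ve) \eqdef t(\ve) - t_\star(\ve)$; the two hypotheses on $t(\ve)$ say exactly that $s(\ve) \to \infty$ while $s(\ve)/T_\ve \to 0$. Exactly as in the proof of Theorem~\ref{thm:convergvence-Gaussian}, I would first use Proposition~\ref{prop:d3} — or rather the fact, discussed below, that the error estimate behind it is uniform over terminal times lying in $[t_\star(\ve), t_\star(\ve) + T_\ve/2]$, an interval containing $t(\ve)$ for $\ve$ small — to reduce to the same statement with $u_\ve$ replaced by $w^N_\ve$ for some fixed large $N$. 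Writing $A_\ve(x) \eqdef P^1_{t(\ve) - t_1}u^N_\ve(t_1, L_\ve x)$, one has $w^N_\ve(t(\ve), L_\ve x) = A_\ve(x)\bigl(1 + (1 - e^{-2(t(\ve) - t_2(\ve))})A_\ve(x)^2\bigr)^{-1/2}$.

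The next step identifies $A_\ve$. Using $P^1_{t(\ve) - t_1} = P^1_{s(\ve)}P^1_{t_\star(\ve) - t_1}$, the identity $P^1_s = e^s e^{s\Delta}$, and the scaling $p_s(L_\ve\,\cdot\,)L_\ve^d = p_{s/L_\ve^2}(\cdot)$ of the Gaussian kernel $p_s$, one gets
\begin{equ}
  A_\ve(x) = e^{s(\ve)}\bigl(e^{(s(\ve)/T_\ve)\Delta}f_\ve\bigr)(x)\;,\qquad f_\ve(x) \eqdef P^1_{t_\star(\ve) - t_1}u^N_\ve(t_1, L_\ve x)\;.
\end{equ}
By Lemma~\ref{lem:Pu1convg} at $t = 0$ we have $f_\ve \to \Psi$ in $\Cloc$, with the uniform growth bounds accompanying it; since $s(\ve)/T_\ve \to 0$ and $\Psi$ is a.s.\ continuous with at most polynomial growth, this yields $e^{(s(\ve)/T_\ve)\Delta}f_\ve \to \Psi$ in $\Cloc$, hence $A_\ve = e^{s(\ve)}(\Psi + E_\ve)$ with $E_\ve \to 0$ in $\Cloc$. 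Passing, via Skorokhod's theorem, to a common probability space carrying $\Psi$ and $(u_\ve)_\ve$ on which this holds almost surely and on which $u_\ve(t(\ve), L_\ve\,\cdot\,) - w^N_\ve(t(\ve), L_\ve\,\cdot\,) \to 0$ in probability locally uniformly gives the coupling of the statement.

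It then remains to compare $w^N_\ve(t(\ve), L_\ve\,\cdot\,)$ with $\sgn(\Psi)$ on $K^1_\delta$. Fix $\delta, \zeta, \rho \in (0, 1)$. Since $K^1_\delta$ is compact and at positive distance from $\Gamma_1 = \{\Psi = 0\}$, the variable $c_0 \eqdef \inf_{K^1_\delta}|\Psi|$ is a.s.\ strictly positive, so we may fix $c_1 = c_1(\rho, \delta) > 0$ with $\PP(c_0 < c_1) < \rho$. On the event $\{c_0 \ge c_1\} \cap \{\|E_\ve\|_{L^\infty(\{|x| \le \delta^{-1}\})} < c_1/2\}$, of probability $> 1 - 2\rho$ for $\ve$ small, $\Psi + E_\ve$ and $\Psi$ have the same sign on $K^1_\delta$ and both $|A_\ve|$ and $e^{s(\ve)}|\Psi|$ are there $\ge e^{s(\ve)}c_1/2$. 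Using that $\xi \mapsto \xi(1 + \xi^2)^{-1/2}$ is $1$-Lipschitz with derivative at most $|\xi|^{-3}$, and that $e^{-2(t(\ve) - t_2(\ve))} \to 0$ because $t(\ve) - t_2(\ve) = \tau_\star(\ve) + s(\ve) + \tfrac12\log\log\ve^{-1} \to \infty$, one finds on that event
\begin{equ}
  \sup_{K^1_\delta}\bigl| w^N_\ve(t(\ve), L_\ve\,\cdot\,) - \sgn(\Psi)\bigr| \lesssim \frac{\|E_\ve\|_\infty}{e^{2s(\ve)}c_1^3} + e^{-2(t(\ve) - t_2(\ve))} + \sup_{K^1_\delta}\Bigl| \frac{\Psi}{\sqrt{e^{-2s(\ve)} + \Psi^2}} - \sgn(\Psi)\Bigr|\;,
\end{equ}
and the last supremum is $\lesssim e^{-2s(\ve)}c_1^{-2}$. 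Since $s(\ve) \to \infty$, on the good event the right-hand side is bounded by a deterministic null sequence depending only on $c_1$, so it is $< \zeta$ for $\ve$ small; hence $\limsup_\ve \PP\bigl(\|w^N_\ve(t(\ve), \cdot L_\ve) - \sgn(\Psi)\|_{K^1_\delta} > \zeta\bigr) \le 2\rho$. Letting $\rho \downarrow 0$ and recombining with $u_\ve - w^N_\ve \to 0$ in probability concludes.

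I expect the main obstacle to be the first step. Proposition~\ref{prop:d3} is stated for a terminal time $t_\star(\ve) + t$ with $t$ \emph{fixed}, whereas here $t(\ve) - t_\star(\ve) = s(\ve)$ may diverge, albeit only by $o(T_\ve)$; one must therefore check that the stochastic estimates in its proof are uniform over such a window of terminal times — which should follow from the same argument, all the relevant bounds being valid on the whole of $[0, t_2(\ve))$ — or else replace this step by a direct short-time comparison argument started at time $t^\kappa_\star(\ve)$, exploiting that over the additional time $o(T_\ve) = o(L_\ve^2)$ the Allen--Cahn fronts move only by $o(L_\ve)$. The remaining ingredients (the heat-kernel rescaling and the handling of the random lower bound $c_0$) are routine.
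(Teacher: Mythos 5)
The decisive step of your argument is the claimed uniformity of Proposition~\ref{prop:d3} over terminal times $t(\ve)=t_\star(\ve)+s(\ve)$ with $s(\ve)=o(T_\ve)$, and this is precisely where the argument breaks down. The error in that proposition is controlled through the term $H_\ve$, built from $|\nabla P^1_{s-t_1}u^N_\ve(t_1,\cdot)|^2$ via Proposition~\ref{prop:mPXtau} (cf.\ \eqref{tmp:stt}); integrating this bound from $t_2$ up to a terminal time $t_\star(\ve)+s(\ve)$ produces a quantity of order $e^{2s(\ve)}(\log\ve^{-1})^{-1}$ up to $\log\log$ factors, because $e^{2t_\star}\ve^{d-2\alpha}\asymp(\log\ve^{-1})^{d/2}$ while the prefactor $s^{-d/2-1}$ only gains $(\log\ve^{-1})^{-d/2-1}$. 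This is small only when $s(\ve)\lesssim\kappa\log\log\ve^{-1}$ with $\kappa$ small — which is exactly the restriction $\kappa<\tfrac14$ in Proposition~\ref{prop:d3}, and the paper explicitly remarks after Proposition~\ref{prop:front-formation} that the route through Proposition~\ref{prop:d3} only covers $t(\ve)=t^{\kappa}_\star(\ve)$ for $\kappa\in(0,\tfrac14)$. So "the same argument, all the relevant bounds being valid on $[0,t_2(\ve))$" does not give what you need: the problem is the growth of the error integral on $[t_2(\ve),t(\ve)]$, which is exponential in $t(\ve)-t_2(\ve)$. There is also a structural reason to distrust the reduction for $s(\ve)\gg\log\log\ve^{-1}$: in rescaled coordinates the transition layer of $w^N_\ve$ has width of order $e^{-s(\ve)}$, whereas that of the true solution saturates at order $L_\ve^{-1}$, so near $\Gamma_1$ the locally uniform closeness you invoke is itself doubtful, not merely unproven.

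Your fallback sentence ("replace this step by a direct short-time comparison argument started at $t^\kappa_\star(\ve)$, exploiting that over $o(T_\ve)=o(L_\ve^2)$ the fronts move only by $o(L_\ve)$") is in fact the paper's actual proof, and it is the substantive content rather than a routine remaining ingredient. The paper splits, along subsequences, into the regime $\limsup(t(\ve)-t^{1/2}_\star(\ve))\le 0$, handled by Lemma~\ref{lem:front-formation-loglog} via comparison with subsolutions of the form $\overline{\Phi}\big(t,\psi(x)-Kt/L_\ve\big)$ together with the a-priori bound of Lemma~\ref{lem:cmng-dwn-inf}, and the regime $\liminf(t(\ve)-t^{1/2}_\star(\ve))>0$, handled by Proposition~\ref{prop:slower-than-mcf}, which requires constructing an explicit viscosity subsolution out of the travelling wave $\mf{q}$ composed with (an exponentially tilted) signed distance function, extending it globally by gluing with the spatially homogeneous solution and a cutoff, and verifying the subsolution property term by term; the coupling comes from Lemma~\ref{lem:prob-space} (which is built by hand, since Skorokhod representation is invoked there with some care). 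As written, your proposal neither establishes the uniform version of Proposition~\ref{prop:d3} it relies on (and the paper's estimates indicate it fails beyond $s(\ve)\sim\log\log\ve^{-1}$), nor carries out the comparison construction that actually proves the statement for the longer times allowed by the hypothesis.
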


\begin{proof}
Up to taking subsequences, it suffices to prove the result in either of the
following two cases:
\begin{equs}
\liminf_{\ve \to 0} \{ t(\ve) - t^{\frac{1}{2}}_{\star}(\ve) \} >0\;, \qquad \text{
or } \qquad \limsup_{\ve \to 0} \{ t(\ve) - t^{\frac{1}{2}}_{\star}(\ve) \}
\leqslant 0\,.
\end{equs}
In the first case, the result is a consequence of
Proposition~\ref{prop:slower-than-mcf} while in the second case it follows 
from Lemma~\ref{lem:front-formation-loglog}. In both cases the choice of the
coupling is provided by Lemma~\ref{lem:prob-space}.
\end{proof}
In the case $ t(\ve) =
t_{\star}^{\kappa}(\ve)  $ for some $ \kappa \in (0, \frac{1}{4} ) $
one can also obtain the above result as a consequence of Proposition~\ref{prop:d3}.
Finally, we show that if we wait for an additional time of order $ T_{\ve} $, the interface moves under mean
curvature flow. This result is a consequence of Proposition~\ref{prop:conv-mcf}. 
 
\begin{theorem}\label{thm:mcf}
Consider $ U_{\ve}(\sigma, x)$ as in \eqref{eqn:capital-U} for $
\sigma > 1, x \in \RR^{d} $. There exists a coupling between $ u_{\ve} $ and
$ \Psi $ such that for any $ \delta, \zeta \in (0, 1) $ 
\begin{equs}
\lim_{\ve \to 0} \PP \big( \| U_{\ve}(\cdot) - v (\Psi; \cdot) \|_{K_{\delta}} >
\zeta \big) = 0\,.
\end{equs}
\end{theorem}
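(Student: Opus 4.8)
The plan is to recast \eqref{eqn:uep} as the classical sharp--interface scaling of the Allen--Cahn equation and then to chain the front--formation result of Proposition~\ref{prop:front-formation} with the convergence--to--mean--curvature--flow statement of Proposition~\ref{prop:conv-mcf}. Setting $\tilde u_\ve(\sigma, x) \eqdef u_\ve(\sigma T_\ve + \tau_\star(\ve), x L_\ve)$ and using $L_\ve = \sqrt{T_\ve}$, the parabolic change of variables turns \eqref{eqn:uep}, for $\sigma$ in the range of interest, into
\begin{equ}
\partial_\sigma \tilde u_\ve = \Delta \tilde u_\ve + T_\ve\,(\tilde u_\ve - \tilde u_\ve^3)\;,
\end{equ}
which is the Allen--Cahn equation with diverging reaction strength $T_\ve \to \infty$, equivalently the sharp--interface limit with small parameter $T_\ve^{-1/2}$. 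Since the prefactor in \eqref{eqn:capital-U} equals $1$ for $\sigma \geqslant 1$, one has $U_\ve(\sigma, \cdot) = \tilde u_\ve(\sigma, \cdot)$ there, and $K_\delta$ only charges $\sigma > 1 + \delta$.

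First I would fix any $\kappa > 0$ and place the ``initial'' time slice at $\sigma_\ve \eqdef 1 + \kappa\,\log\log\ve^{-1}/T_\ve$, which in the original variables corresponds to $t^\kappa_\star(\ve)$ from \eqref{eqn:t-star-kappa} and satisfies $\sigma_\ve \to 1$. Proposition~\ref{prop:front-formation} with $t(\ve) = t^\kappa_\star(\ve)$ --- whose hypotheses hold trivially, the first $\liminf$ being $0$ and the second $\limsup$ being $\lim \kappa \log\log\ve^{-1}/T_\ve = 0$ --- furnishes a coupling of $\{u_\ve\}$ with $\Psi$, namely the one of Lemma~\ref{lem:prob-space}, under which, for every $\delta', \zeta' \in (0,1)$,
\begin{equ}
\PP\bigl(\|\tilde u_\ve(\sigma_\ve, \cdot) - \sgn(\Psi(\cdot))\|_{K^1_{\delta'}} > \zeta'\bigr) \to 0
\end{equ}
as $\ve \to 0$. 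In words: with probability tending to one, at time $\sigma_\ve$ the rescaled solution has already resolved the two nodal domains $\Omega^\pm_1 = \{\pm \Psi > 0\}$, being $\pm 1 + o(1)$ there on the relevant compacts.

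Next I would invoke the pathwise (deterministic in the realisation of $\Psi$) statement of Proposition~\ref{prop:conv-mcf}: for a continuous $f$, if the rescaled solutions $\tilde u_\ve$ start at times $\sigma_\ve \to 1$ from data converging to $\sgn(f)$ locally uniformly on $\{f \neq 0\}$, then $\tilde u_\ve(\sigma, x) \to v(f; \sigma, x)$ locally uniformly on $\{(\sigma,x) : \sigma > 1,\ v(f;\sigma,x) \neq 0\}$. Applying this with $f = \Psi$ along the above coupling, and given $\delta, \zeta \in (0,1)$, I would choose $\delta', \zeta'$ small enough (relative to $\delta, \zeta$) that, on the event --- of probability $\to 1$ --- where the front--formation bound holds with parameters $\delta', \zeta'$, the hypotheses of Proposition~\ref{prop:conv-mcf} are met for that realisation of $\Psi$; this gives $\|\tilde u_\ve(\cdot) - v(\Psi;\cdot)\|_{K_\delta} \leqslant \zeta$, hence $\PP(\|U_\ve(\cdot) - v(\Psi;\cdot)\|_{K_\delta} > \zeta) \to 0$. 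Since the same coupling is used throughout, this is the asserted statement.

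The hard part is not this reduction but the substance of Proposition~\ref{prop:conv-mcf}. Two points are delicate. First, the data at time $\sigma_\ve$ produced by front formation are not well--prepared transition layers of width $T_\ve^{-1/2}$: one must first run a short ``generation of interface'' phase, of length $o(1)$ in $\sigma$ and hence harmlessly absorbed into $\sigma_\ve \to 1$, after which a $\tanh$--type profile is in place --- the standard comparison with spatially homogeneous sub/supersolutions of $\partial_\sigma \Phi = T_\ve(\Phi - \Phi^3)$. Second, and more seriously, the Bargmann--Fock nodal set $\Gamma_1 = \{\Psi = 0\}$ is random and, for $d \geqslant 3$, geometrically uncontrolled and possibly subject to fattening (cf.\ Remark~\ref{rem:fattening}), so smooth--interface arguments are unavailable; the robust route is the level--set / viscosity--solution formulation, sandwiching $\tilde u_\ve$ between viscosity sub- and supersolutions of the level--set mean curvature equation built from $\Psi$ by means of $\ve$--uniform barriers and passing to the half--relaxed limits, the exclusion of a $\delta$--neighbourhood of $\Gamma$ in $K_\delta$ being precisely what lets the strict inequalities survive. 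This is where essentially all of the work of Section~\ref{sec:convergence-mcf} resides, while Theorem~\ref{thm:mcf} itself is the short deduction above.
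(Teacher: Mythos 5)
Your deduction reaches the right conclusion, but it takes a more roundabout route than the paper and, as a literal derivation from the paper's stated results, leans on a reformulation that is not actually provided. In the paper, Theorem~\ref{thm:mcf} is an immediate consequence of Proposition~\ref{prop:conv-mcf}: that proposition is stated for the coupling of Lemma~\ref{lem:prob-space} and asserts precisely $\lim_{\ve\to0}\PP\big(\|U_\ve(\cdot)-v(\Psi;\cdot)\|_{K_\delta}\geqslant\zeta\big)=0$, so no appeal to Proposition~\ref{prop:front-formation} is needed. The front-formation step you interpose is superfluous because the machinery inside the proof of Proposition~\ref{prop:conv-mcf} --- comparison with $R$-truncated, $\ve$-independent sub- and supersolutions, the Barles--Souganidis convergence result \cite[Theorem 4.1]{barles1998} applied for each fixed $R$, and then a limit $R\to\infty$ using comparison for level-set flow and stability of viscosity solutions --- already handles the generation of the interface starting from the merely order-one data $U_\ve(1,\cdot)\to\overline{\Phi}(0,\Psi(\cdot))$ at $\sigma=1$; the data need not be close to $\sgn(\Psi)$ at the initial slice. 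Two caveats about your version: first, the ``pathwise conditional'' statement you attribute to Proposition~\ref{prop:conv-mcf} (initial data at times $\sigma_\ve\to1$ converging to $\sgn(f)$ away from $\{f=0\}$ implies convergence to $v(f;\cdot)$) is not what that proposition says, so under your two-step generation/propagation scheme the propagation step would have to be proved separately, which essentially amounts to redoing the sub/supersolution construction of Section~\ref{sec:convergence-mcf}; second, feeding an in-probability front-formation estimate into a pathwise propagation argument requires extracting almost surely convergent subsequences (or a quantitative barrier argument uniform in the realisation of $\Psi$), a step the paper carries out explicitly at the start of each proof in Section~\ref{sec:convergence-mcf} and which your sketch glosses over. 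With these points understood, your closing description of where the substance lies (viscosity/level-set sandwiching, $\ve$-uniform barriers, and the role of excising a $\delta$-neighbourhood of $\Gamma$ in $K_\delta$) is consistent with the paper's actual strategy.
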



\section{Wild expansion} 
\label{sec:wild-expansion}

The next two sections are devoted to tracking $ u_{\ve} $ for times up to and around
$ t_{\star} (\ve) $. In order to complete this study we divide the interval
$ [0, t_{\star}(\ve)] $ into different regions. Let $\bar \alpha\in(\alpha,1)$ be fixed and define
	\begin{equs}[eqn:t-1-2]
		t_1(\varepsilon)& \eqdef (\bar \alpha- \alpha)\log \varepsilon^{-1}, \\
		t_2(\varepsilon)& \eqdef \(\frac d2- \alpha\)\log \varepsilon^{-1}-\frac12\log\log \varepsilon^{-1}\,.
	\end{equs}
We observe that for $ \ve $ small one has
\begin{equ}
0 \ll t_{1}(\ve) \ll t_{2}(\ve) \ll t_{\star} (\ve)\;.
\end{equ}
The specific forms of these times are chosen such that in complement to the moment estimates, the error terms in various estimates which appear below remain small when $\varepsilon\to0$. 
	The constant $-1/2$ which appears in $t_2(\varepsilon)$ can be replaced by any negative number, and in spatial dimensions 3 or higher can be replaced by $0$.
	The separation of time scales is due to that fact that the linear and nonlinear terms in \eqref{eqn:uep} have different effects on the solution over each period.

	During the time period $[0,t_1(\ve)]$, the Laplacian dominates and \eqref{eqn:uep} can be treated as a
perturbed heat equation. One expects that a truncated Wild expansion $
u_{\ve}^{N} $ (where $ N $ is the level of the truncation) provides a
good approximation for $u_\varepsilon$ during this initial time period, see
Proposition \ref{prop:uuN} at the end of this section.
During the time period $[t_1(\ve),t_2(\ve)]$, the linear term increases
the size of the solution from $\CO(\eps^{\theta})$ for 
any $ 0 < \theta < \frac{d}{2} - \overline{\alpha} $ to
almost a size of order 1. During this
period, the estimate \eqref{est:uuN} in Proposition~\ref{prop:uuN} does not reflect
the actual size of the solution $u_\varepsilon$. However, the leading order
term $X^\bullet_{\ve}$ in $u^N_{\ve}$ (which is the solution to the linearisation near $ 0
$ of \eqref{eqn:uep}) at time $t_2(\ve)$ is
of order $(\log \varepsilon^{-1}) ^{-d/4}$. Hence the
solution $u_\varepsilon$ still remains small, and we expect that the non-linear
term $u_\varepsilon^3$ in \eqref{eqn:uep} is negligible: see Lemma~\ref{lem:S}.
Eventually one starts to see the classical dynamic of the Allen--Cahn equation
during $[t_2(\ve),t_\star(\ve)]$, as explained in Proposition~\ref{prop:d3}. 

To establish the aforementioned results, we will frequently make use of the following 
formulation of the maximum principle. 
	\begin{lemma}\label{lem:abs.v}
		Let $(a,b) \subset \R$ be a finite interval and let $G,R$ be measurable functions on $(a,b)\times\R^d$ such that $G$ is non-negative. Suppose that $v$ is a function in $C^2([a,b]\times\R^d) $ satisfying
		\begin{equation*}
			(\partial_t- \Delta-1)v=-Gv+R \quad\textrm{on}\quad (a,b)\times\R^d
			\,.
		\end{equation*}
		Then,
		\begin{equation*}
			|v(t,x)|\le P^1_{t-a}|v|(a,\cdot)+ \int_{a}^{t}
P^1_{t - s} |R|(s) \ud s \quad\forall (t,x)\in [a,b]\times\R^d\,.
		\end{equation*}
	\end{lemma}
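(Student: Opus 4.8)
The plan is to recognise this as a comparison principle: I would show that $\lvert v\rvert$ is a subsolution of the linear heat-type equation $(\partial_t-\Delta-1)w=\lvert R\rvert$ and then read the asserted bound off the variation-of-constants formula for that equation. The hypothesis $G\geqslant 0$ is used exactly once, namely that the term $-Gv$ carries the favourable sign: formally, multiplying the equation by $\sgn(v)$ gives $(\partial_t-\Delta-1)\lvert v\rvert\leqslant -G\lvert v\rvert+\lvert R\rvert\leqslant\lvert R\rvert$. Because $r\mapsto\lvert r\rvert$ fails to be differentiable at the origin, the first step is to replace it by the smooth convex approximation $\chi_\delta(r):=\sqrt{r^2+\delta^2}-\delta$ for $\delta>0$, which obeys $0\leqslant\chi_\delta(r)\leqslant\lvert r\rvert$, $\lvert\chi_\delta'\rvert\leqslant 1$, $\chi_\delta''\geqslant 0$, $r\chi_\delta'(r)\geqslant 0$, and $r\chi_\delta'(r)\leqslant\chi_\delta(r)+\delta$, and with $\chi_\delta(r)\uparrow\lvert r\rvert$ as $\delta\downarrow 0$.

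Since $v\in C^2$, so is $\chi_\delta(v)$; the chain rule together with $\chi_\delta''\geqslant 0$ gives the pointwise inequality $(\partial_t-\Delta)\chi_\delta(v)\leqslant\chi_\delta'(v)(\partial_t-\Delta)v=\chi_\delta'(v)(v-Gv+R)$, and estimating the three summands on the right by the properties of $\chi_\delta$ and by $G\geqslant 0$ produces $(\partial_t-\Delta-1)\chi_\delta(v)\leqslant\lvert R\rvert+\delta$ on $(a,b)\times\R^d$, where the left-hand side is continuous in $(t,x)$. I would then finish by the variation-of-constants identity: for fixed $(t,x)$ and $s\in(a,t)$, using that the kernel of $P^1_{t-s}$ is non-negative,
\[
\frac{\mathrm{d}}{\mathrm{d}s}\bigl(P^1_{t-s}\chi_\delta(v)(s,\cdot)\bigr)(x)=\bigl(P^1_{t-s}(\partial_s-\Delta-1)\chi_\delta(v)(s,\cdot)\bigr)(x)\leqslant\bigl(P^1_{t-s}(\lvert R\rvert(s,\cdot)+\delta)\bigr)(x)\,.
\]
Integrating over $s\in[a,t]$ and using that $P^1_0$ acts as the identity gives $\chi_\delta(v)(t,x)\leqslant P^1_{t-a}\chi_\delta(v)(a,\cdot)(x)+\int_a^t P^1_{t-s}(\lvert R\rvert(s,\cdot)+\delta)(x)\,\ud s$. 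Finally, letting $\delta\downarrow 0$: the spurious contribution equals $\int_a^t P^1_{t-s}\delta\,\ud s=\delta(e^{t-a}-1)\to 0$, one has $P^1_{t-a}\chi_\delta(v)(a,\cdot)\uparrow P^1_{t-a}\lvert v\rvert(a,\cdot)$ by monotone convergence, and $\chi_\delta(v)(t,x)\to\lvert v\rvert(t,x)$, which yields the claim.

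The only genuine difficulty --- the rest being routine --- is to justify the calculus on the unbounded domain $\R^d$: for the quantities $P^1_{t-s}\chi_\delta(v)(s,\cdot)(x)$ to be finite, and for the differentiation under the semigroup and the fundamental theorem of calculus to apply, one needs mild control on the growth of $v$ at spatial infinity. This is automatic in all the situations where we invoke the lemma, in which $v$ is bounded; in general the statement is non-trivial only when its right-hand side is finite, in which case it still follows from the same computation after a standard localisation/exhaustion argument (alternatively, the whole estimate can be phrased through the Feynman--Kac representation of $(\partial_t-\Delta-1+G)v=R$, where the weight $e^{-\int_s^t G(r,B_r)\,\ud r}\leqslant 1$ --- once again because $G\geqslant 0$ --- makes the bound manifest).
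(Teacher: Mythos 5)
Your argument is correct and is essentially the intended one: the paper presents this lemma as a formulation of the maximum principle, and your route --- smoothing $|v|$ by $\chi_\delta$, using $G\geqslant 0$ and convexity to show $(\partial_t-\Delta-1)\chi_\delta(v)\leqslant |R|+\delta$, and then integrating the Duhamel identity for the positivity-preserving semigroup $P^1$ before letting $\delta\downarrow 0$ --- is exactly the comparison/subsolution mechanism the paper relies on. The caveat you raise about growth at spatial infinity is the right one to flag, and your resolution (boundedness in all applications, or a localisation/Feynman--Kac argument in general) is adequate.
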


\subsection{Moment estimates} 
\label{sec:moment_estimates}
	Let $\tree_3$ be the set of rooted ternary ordered trees defined inductively by postulating that 
	$\bullet \in \tree_3$ and, for any $\tau_i\in \tree_3$, one has $\tau=[\tau_1,\tau_2,\tau_3] \in \CT_3$. In general, for a finite collection of trees $\tau_1,\dots, \tau_n$ we denote by $\tau = [\tau_1,\dots,\tau_n]$ the tree obtained by connecting the roots of the $\tau_i$ with a 
	new node, which in turn becomes the root of the tree $\tau$:
	\begin{equation*}
		[\tau_1,\dots,\tau_n]=
		\begin{tikzpicture}[baseline=2]
			\node[dot] (0) at (0,0) {};
			\draw (0) to (-.5,0.4) node at (-.5,.5) {$\tau_1$} ;
			\draw (0) to (-.2,0.4) node at (-.2,.5) {$\tau_2$} ;
			\draw (0) to (0,0.4) node at (0.2,.5) {\tiny$\cdots$} ;
			\draw (0) to (0.2,0.4) ;
			\draw (0) to (0.5,0.4) node at (0.5,.5) {$\tau_n$} ;
		\end{tikzpicture}\,.
	\end{equation*}
These trees are \emph{ordered} in the sense that $
[\tau_{1}, \tau_{2}, \tau_{3}] \neq [\tau_{1}, \tau_{3}, \tau_{2}] $ unless
$ \tau_{2} = \tau_{3} $, and similarly for all other permutations. For example
we distinguish the following two trees:
\begin{equ}
	\begin{tikzpicture}[baseline=2]
		\node[dot](root) at (0,0) {};
		\node[dot](left) at (-0.21,.17) {};
		\node[dot](right) at (.21,.17) {};
		\node[dot](center) at (.0,.17) {};
		\node[dot](uleft) at (-0.19,.35) {};
		\node[dot](ucenter) at (.0,.35) {};
		\node[dot](uright) at (0.19,.35) {};
		\draw (root) to (left) {};
		\draw (root) to (right) {};
		\draw (root) to (center) {};
		\draw (center) to (uleft) {};
		\draw (center) to (ucenter) {};
		\draw (center) to (uright) {};
	\end{tikzpicture} \neq 
	\begin{tikzpicture}[baseline=2]
		\node[dot](root) at (0,0) {};
		\node[dot](left) at (-0.21,.17) {};
		\node[dot](right) at (.21,.17) {};
		\node[dot](center) at (.0,.17) {};
		\node[dot](uleft) at (0.02,.35) {};
		\node[dot](ucenter) at (.21,.35) {};
		\node[dot](uright) at (0.40,.35) {};
		\draw (root) to (left) {};
		\draw (root) to (right) {};
		\draw (root) to (center) {};
		\draw (right) to (uleft) {};
		\draw (right) to (ucenter) {};
		\draw (right) to (uright) {};
	\end{tikzpicture} \; .
\end{equ}
For $\tau \in \tree_3$, we write
$i(\tau)$ for the number of inner nodes of $[\tau]$ (\ie\ nodes which are not
leaves, but including the root) and $\ell(\tau)$ be the number of leaves of $[\tau]$. Then $\ell$ and $i$ are related by
	\begin{equation}
		\ell(\tau)=2i(\tau)+1\,.
	\end{equation}
	For each positive integer $n$, $\pi_n= (2n-1)!!$ is the number of pairings of $2n$ objects so that each object is paired with exactly one other object.
	For $\tau=[\tau_{1}, \tau_{2}, \tau_{3}]\in\tree_3$, we define $X^\tau_\varepsilon$ inductively by solving
	\begin{equs}\label{eqn:Xtrunk}
		\partial_t X^{\bullet}_{\ve}&=(\Delta+1)X^{\bullet}_{\ve}
\,,&\quad X^{\bullet}_{\ve}(0,\cdot)&= \eta_\varepsilon(\cdot)\,,\\
\label{eqn:Xtau}
		\partial_t X^\tau_\varepsilon&=(\Delta+1)X^\tau_\varepsilon-X^{\tau_1}_\varepsilon X^{\tau_2}_\varepsilon X^{\tau_3}_\varepsilon\,,&\quad X^\tau_\varepsilon(0,\cdot)&=0\;.
	\end{equs}
	For $N \ge 1$,
	we write $\tree_3^N \subset \tree_3$ for the trees
	with $i(\tau)\le N$ and we define 
	\begin{equation}\label{def:uN}
	 	u^N_\varepsilon \eqdef \sum_{\tau\in\tree_3^N}X^\tau_\varepsilon\,,
	\end{equation}
	which is a truncated Wild expansion of $ u_{\ve} $.
	Then $u^N_\varepsilon$ satisfies the following equation
	\begin{align}\label{eqn:uN}
		(\partial_t- \Delta-1)u^N_\varepsilon=-(u^N_\varepsilon)^3+R^N_\varepsilon\,,\quad u^N_\varepsilon(0,\cdot)=u_\varepsilon(0,\cdot)\,,
	\end{align}
	where
	\begin{equation}\label{eqn:RN}
		R^N_\varepsilon=\sum_{\tau_1,\tau_2,\tau_3\in\tree^N_3:\, [\tau_1,\tau_2,\tau_3]\notin\tree^N_3}X^{\tau_1}_\varepsilon X^{\tau_2}_\varepsilon X^{\tau_3}_\varepsilon \,.
	\end{equation}
	Indeed, using \eqref{eqn:Xtau} and \eqref{eqn:Xtrunk}, we have
	\begin{align*}
		(\partial_t- \Delta-1)u^N_\varepsilon
		&=-\sum_{\tau=[\tau_1,\tau_2,\tau_3]\in\tree_3^N\setminus\{\bullet \}}X^{\tau_1}_\varepsilon X^{\tau_2}_\varepsilon X^{\tau_3}_\varepsilon
		\\&=-\Big(\sum_{\tau\in\tree^N_3}X^\tau_\varepsilon \Big)^3+R^N_\varepsilon
		=-(u^N_\varepsilon)^3+R^N_\varepsilon\,.
	\end{align*}
	Let us now introduce some graphical notations to represent conveniently
$X^\tau_{\ve}$ and the integrals that will be associated to them.
	In what follows the negative heat kernel $-P^1(t-s,x-y)$ (here the negative 
sign appears to keep track of the sign of the polynomial term in
\eqref{eqn:Xtau}) will be represented by a directed edge
	\begin{equation*}
		- K_{\myarcb}
((t,x), (s, y))= -P^1(t-s,x-y)=
		\begin{tikzpicture}[baseline=10]
		\node[root,label=0:{$(t,x)$}](l) at (0,0) {};
		\node[root,label=0:{$(s,y)$}](h) at (0.3,1){};
		\draw[->] (h) to (l);
		\end{tikzpicture}\,.
	\end{equation*}
	Each endpoint of an edge represents a space-time variable and the kernel is evaluated at their difference. 
	Three different nodes \tikz[baseline=-2]\node[root] {}; ,
\tikz[baseline=-2]\node[dot] {}; and \tikz[baseline=-2]\node[noise] {}; can be
attached to an end of an edge, which represent respectively a fixed space-time
variable, integration with respect to Lebesgue measure $ \ud s \ud y$, and
integration with respect to the (random) measure $ - \delta(s) \eta_{\varepsilon}(y) \ud s
\ud y$. Here, $\delta$ is the Dirac mass at 0 and the minus sign is again
just a matter of convention. For example, we have
	\begin{equ}
		\begin{tikzpicture}[baseline=5]
		\node[root,label=0:{$(t,x)$}](l) at (0,0) {};
		\node[noise](h) at (0,0.6){};
		\draw[->] (h) to (l);
		\end{tikzpicture}
		=\int_{\R^d}P^1(t,x-y) \eta_{\ve} (y) \ud y=X^\bullet_{\ve}(t,x)\,,
	\end{equ}
as well as 
	$
		X^{\<3>}_\varepsilon = 
		\begin{tikzpicture}[baseline=5]
		\node[root](r) at (0,0) {};
		\node[dot](1) at (0,0.3) {};
		\node[noise](11) at (-.2,.6) {};
		\node[noise](12) at (0,.6) {};
		\node[noise](13) at (.2,.6) {};
		\draw[->] (11) to (1);
		\draw[->] (12) to (1);
		\draw[->] (13) to (1);
		\draw[->] (1) to (r);
		\end{tikzpicture}
	$.
	
	Since all the edges follow the natural direction toward the root, we
may drop the arrows altogether and just write $X^\bullet_{\ve}=\<Y>$ and
$X^{\<3>}_{\ve}=\<IY^3>$.
For a general tree $\tau \in \tree_3$,
$X^{\tau}_\varepsilon$ is represented by the tree $[\tau]$ with its root
coloured red and leaves coloured green.

	Given a tree $\tau\in\tree_3$, we would like to estimate the moment $\E [X^\tau_\varepsilon(t,x)]^2$.
	The contracting kernel (see \eqref{e:defEtax})
	\begin{equation}\label{eqn:contraction-kernel}
	 	K_{\myarcg} ( (s,y), (\bar s,\bar y)) = \varepsilon^{d-2
\alpha}\delta(s)\delta(\bar s)\int_{\R^d} \phi^\varepsilon_y(z)
\phi^\varepsilon_{\bar y}(z) \ud z\,,
	\end{equation}
	which appears when one contracts two noise nodes (\ie two green nodes),
is represented as a green edge. Note that since these kernels are symmetric,
green edges are naturally undirected. Moreover, since we are
only interested in upper bound and up to replacing $ \varphi^{\ve} $ with $ |
\varphi^{\ve} | $, we may assume that \[ \varphi^{\ve} (x) \geqslant 0, \qquad \forall x \in
\RR^{d}.  \] Then for each $\varepsilon$, we define
the \emph{positive} kernel
$$p_t^\varepsilon(x)=\int_{\R^d}p_t(x-y)\phi^\varepsilon_y(0) \ud y, \qquad
p_{t}(x) = \frac{e^{ - \frac{| x |^{2}}{ 4t} }}{(4 \pi t )^{\frac{d}{2}}}.$$
From Young's inequality, we see that
	\begin{align*}
		\|p^\varepsilon_t\|_{L^2(\R^d)}\le (\|p_t\|_{L^2(\R^d)}\|\phi^\varepsilon_\cdot\|_{L^1(\R^d)})\wedge(\|p_t\|_{L^1(\R^d)}\|\phi^\varepsilon_\cdot\|_{L^2(\R^d)})
		\le c (t\vee \varepsilon^2)^{-\frac d4}\;,
	\end{align*}
	for some constant $c>0$.
	Another elementary observation is the bound
	\begin{equs}
		\begin{tikzpicture}[baseline=-10,scale=0.5]
			\node [dot](b) at (0,0) {};
			\node [dot](c) at (1,0) {};
			\node [root,label=180:{$(t_1,x_1)$}](a)  at (-1,-1) {};
			\node [root,label=0:{$(t_2,x_2)$}](d) at (2,-1) {};
			\draw[->] (b) to  (a) {};
			\draw[contract] (b) to (c) {};
			\draw[->] (c) to (d) {};
		\end{tikzpicture}
		&=e^{t_1+t_2} \varepsilon^{d-2
\alpha}\int_{\R^d}p^\varepsilon_{t_1}(x_1-z)p^\varepsilon_{t_2}(x_2-z) \ud z\qquad
		\\&\le (c \varepsilon^{\frac d2- \alpha})^2 e^{t_1+t_2} (t_1\vee \varepsilon^2)^{-\frac d4} (t_2\vee \varepsilon^2)^{-\frac d4} \,.\label{e:usefulBound}
	\end{equs}
	Here we did not use any green nodes
because the distinction between green and black nodes is captured
by the difference between the kernels $ K_{\myarcb} $ and $ K_{\myarcg} $, and
as per assumption every black node indicates integration over both space and
time variables.

	We begin with an estimate for $\E\<Y>^2(t,x)$. The second moment is
obtained by contracting two leaves,
	\begin{equation*}
		\E[\<Y>^2(t,x)]=
		\begin{tikzpicture}[baseline=5]
			\node[root,label=below:{\tiny$(t,x)$}](root) at (0,0) {};
			\node[dot](left) at (-0.5,.6) {};
			\node[dot](right) at (.5,.6) {};
			\draw (root) to (left) {};
			\draw (root) to (right) {};
			\draw[contract] (right) [bend right=30] to (left) {};
		\end{tikzpicture}
		\le(ce^t \varepsilon^{\frac d2- \alpha})^2(t\vee \varepsilon^2)^{-\frac d2}  \,.
	\end{equation*}
	For a general tree $\tau$, the second moment $\E |X^\tau_{\ve}(t,x)|^2$ is obtained by summing 
	over all pairwise contractions between the leaves of $[\tau,\bar\tau]$, where $\bar \tau$ is an identical copy of the tree $\tau$. For instance,
	\begin{equation}\label{eqn:32nd}
		\E\<IY^3>^2=
		6
		\begin{tikzpicture}[baseline=5]
			\node[root](root) at (0,0) {};
			\node[dot](l) at (-0.5,0.3) {};
			\node[dot](r) at (.5,.3) {};
			\node[dot](r1) at (.7,.6) {};
			\node[dot](r2) at (.5,.6) {};
			\node[dot](r3) at (.3,.6) {};
			\node[dot](l1) at (-0.7,0.6) {};
			\node[dot](l2) at (-0.5,0.6) {};
			\node[dot](l3) at (-0.3,0.6) {};
			\draw (l) to (root) to (r) {};
			\draw (l) to (l1) {};
			\draw (l) to (l2) {};
			\draw (l) to (l3) {};
			\draw (r) to (r1) {};
			\draw (r) to (r2) {};
			\draw (r) to (r3) {};
			\draw[contract] [bend left=40] (l1) to (r1) {};
			\draw[contract] [bend left=30] (l2) to (r2) {};
			\draw[contract] [bend left=10] (l3) to (r3) {};
		\end{tikzpicture}
		+9
		\begin{tikzpicture}[baseline=5]
			\node[root](root) at (0,0) {};
			\node[dot](l) at (-0.5,0.3) {};
			\node[dot](r) at (.5,.3) {};
			\node[dot](r1) at (.7,.6) {};
			\node[dot](r2) at (.5,.6) {};
			\node[dot](r3) at (.3,.6) {};
			\node[dot](l1) at (-0.7,0.6) {};
			\node[dot](l2) at (-0.5,0.6) {};
			\node[dot](l3) at (-0.3,0.6) {};
			\draw (l) to (root) to (r) {};
			\draw (l) to (l1) {};
			\draw (l) to (l2) {};
			\draw (l) to (l3) {};
			\draw (r) to (r1) {};
			\draw (r) to (r2) {};
			\draw (r) to (r3) {};
			\draw[contract] [bend left=50] (l1) to (l2) {};
			\draw[contract] [bend right=50] (r1) to (r2) {};
			\draw[contract] [bend left=50] (l3) to (r3) {};
		\end{tikzpicture}\,.
	\end{equation}
	Let us take a closer look at each term. In particular, we can extend
our graphical notation in order to obtain efficient estimates for such trees.


\textbf{More tree spaces.} First, we observe that each of the graphs above comes with a
specific structure: it consists of a tree with an even number of leaves,
together with a pairing among the leaves. In addition the root is coloured
red and every node that is not a leaf is the root of up to three planted trees
in $ [\mT_{3}] $. This suggest the introduction of the following space of
paired forests $ \mF $ (later on it will be convenient to work with forests and
not only trees).

Let $ \mT $ be the set of all finite rooted 
trees and for $ \{ \tau_{i} \}_{i=1}^{k} \subseteq \mT $ define $ \l(\tau_{1} \sqcup \cdots \sqcup \tau_{k} ) =
\l(\tau_{1}) + \cdots \l(\tau_{k}) $ the total number of leaves of a forest.
Then define $ \mF $ as
\begin{equs}
\mF = \Big\{ ( \tau_{1}\sqcup \cdots \sqcup \tau_{k}, \gamma) & \ \colon \ k \in
\NN, \ \tau_{i} \in \mT , \\ 
& \text{ with } \l(\tau_{1}\sqcup \cdots \sqcup
\tau_{k}) \in 2 \NN  \ \text{ and } \ \gamma \in \mP_{\l}( \tau_{1}, \dots, \tau_{k}) \Big\}\;.
\end{equs}
where $ \mP_{\l} $ is the set of possible
pairings among the union of all the leaves of the trees $ \tau_{1}, \dots,
\tau_{k} $. 

We naturally think of elements of $
\mF $ as coloured graphs $ \mG = (\mV, \mE)$, with the pairing $ \gamma $ giving rise to green
edges. Observe
that the forest structure of $ \tau_{1} \sqcup \cdots \sqcup
\tau_{k} $ induces a partial order on the set $ \mV $ of all vertices by
defining $ v > v^{\prime} $ if $ v^{\prime} \neq v $ lies on the unique (directed)
path joining $ v $ to its relative root.
As we mentioned, we will be especially interested in elements of $
\mF $ where each tree $ \tau_{i} $ is an element of $ [\mT_{3}] $, with the
twist that up to two such trees can be planted on the same root:
\begin{equ}[eqn:def-f3]
\text{Each $\tau_i$ consists of either $1$ or $2$ trees in $[\mT_{3}]$
glued at their roots.}
\end{equ}
We then set
\begin{equ}
\mF_{3} = \Big\{ ( \tau_{1}\sqcup \cdots \sqcup \tau_{k}, \gamma) \in \mF 
\text{ such that } \eqref{eqn:def-f3} \text{ holds true }  \}\;.
\end{equ}
Finally, we want to introduce a colouring for $ \mG \in \mF $. We already
mentioned that $ \gamma $ is represented by green
edges among the leaves and that all the roots are coloured red, but we will also
introduce yellow vertices.

\textbf{Colourings.} To simplify the upcoming constructions, let us now write $c(e)\in \{\myarcb, \myarcg\}$ and $c(v)\in
\{\<red>, \<yellow>,\<black>\}$ for the colour of edges and vertices of some graph $
\mG \in \mF $ and
let $K_c$ denote the kernel associated to each edge colour. As we will see
later on, colours will appear only in certain locations and will be
valuated in specific ways. To clarify these points we compile the following
tables for the nodes
\begin{center}
\begin{tabular}{cll}
\toprule
\thead{Colour} & \thead{Where?}  &\thead{Valuation}  \\
\hline
\<red> & Roots & Suprema over spatial variables  \\
\<yellow> & Inner nodes of paths  & Disappear under spatial
integration \\
\<black> & Leaves or untouched inner nodes & Integrate\\
\bottomrule
\end{tabular}\vspace{1em}
\end{center}
and define kernels
\[ K_{\myarcg}  = \eqref{eqn:contraction-kernel} , \qquad
 K_{\myarcb} ((t,x), (s, y))= \begin{cases} P^{1}(t-s, x-y), \quad & \text{ if
} t> s, \\
0 \quad & \text{ else. } \end{cases}  \] 
Now, if $\CG=(\CV,\CE)$, we write $\CV_c$ for the set of vertices of colour
$c$.
Given $z_{\mV} \in (\R^{d+1})^{\CV}$ and $A \subset \CV$, we write $z_A \in (\R^{d+1})^{A}$
for the corresponding projection, as well as $t_A \in \R^{A}$ and $x_A \in (\R^{d})^{A}$
for its temporal and spatial components. In the particular case when $A = \CV_c$ for some
colour $c$ we will sometimes simply write $z_c$ instead of $z_{\CV_c}$. Finally, given an
oriented
edge $e \in \CE$, we write $e = (e_-,e_+)$. 

\textbf{Structure of the estimates.} With these notations at hand, for any subgraph
$ \mG^{\prime} = (\mV^{\prime}, \mE^{\prime} ) \subseteq  \mG \in \mF$, we set
\begin{equ}
K_{\CG^{\prime} }(z) = \prod_{e \in \CE^{\prime}  } K_{c(e)}(z_{e_+} -
z_{e_-})\;,\qquad z \in (\R^{d+1})^{\CV^{\prime} }\;.
\end{equ} 
We can then evaluate the graph at its red roots by
\begin{equs}
\mG (z_{ \<red>}) = \int_{ (\RR^{d+1})^\mI} K_{\mG} (z) \ud z_{\mI} \;,
\end{equs}
where $ \mI = \mV \setminus \mV_{\<red>} $.
 Now, the crux of our bounds is to pick certain spatial variables and use
estimates of the following two kinds, for some $ F, G \geqslant 0 $
\begin{equs}
\int_0^t \int_{\R^d} F(x,s) G(x,s) \ud x \ud s & \leqslant \int_0^t \sup_x F(x,s)
\int_{\R^d} G(x,s) \ud x \ud s \;, \label{eqn:intuitive-bound}\\
 \sup_{x \in \RR^{d}} F(x) G(x) & \leqslant \big(\sup_{x}  F(x) \big) \big(
\sup_{x} G(x) \big) \;. \label{eqn:intuitive-2}
\end{equs}
In our setting $ F, G $ will be two components of $ K_{\mG} $, linked to two
subgraphs $ \mG^{(1)}, \mG^{(2)} \subseteq \mG$, which, appropriately glued
together, form $ \mG $. 

\textbf{Glueing.} In pictures, we describe such a glueing as follows:
\begin{equ}[e:basicglue]
		\begin{tikzpicture}[baseline=8]
			\node[root](root) at (0,0) {};
			\node[dot](l) at (-0.5,0.3) {};
			\node[dot](r) at (.5,.3) {};
			\node[dot](r1) at (.7,.6) {};
			\node[dot](r2) at (.5,.6) {};
			\node[dot](r3) at (.3,.6) {};
			\node[dot](l1) at (-0.7,0.6) {};
			\node[dot](l2) at (-0.5,0.6) {};
			\node[dot](l3) at (-0.3,0.6) {};
			\draw (l) to (root) to (r) {};
			\draw (l) to (l1) {};
			\draw (l) to (l2) {};
			\draw (l) to (l3) {};
			\draw (r) to (r1) {};
			\draw (r) to (r2) {};
			\draw (r) to (r3) {};
			\draw[contract] [bend left=40] (l1) to (r1) {};
			\draw[contract] [bend left=30] (l2) to (r2) {};
			\draw[contract] [bend left=10] (l3) to (r3) {};
		\end{tikzpicture}
=		\begin{tikzpicture}[baseline=5]
			\node[root](root) at (0,0) {};
			\node[innern](l) at (-0.5,0.3) {};
			\node[innern](r) at (.5,.3) {};
			\node[dot](r3) at (.3,.6) {};
			\node[dot](l3) at (-0.3,0.6) {};
			\draw (l) to (root) to (r) {};
			\draw (l) to (l3) {};
			\draw (r) to (r3) {};
			\draw[contract] [bend left=10] (l3) to (r3) {};
		\end{tikzpicture} \ \ \glue \ \ 
		\begin{tikzpicture}[baseline=12]
			\node[root](l) at (-0.5,0.3) {};
			\node[root](r) at (.5,.3) {};
			\node[dot](r2) at (.5,.6) {};
			\node[dot](l2) at (-0.5,0.6) {};
			\draw (l) to (l2) {};
			\draw (r) to (r2) {};
			\draw[contract] [bend left=30] (l2) to (r2) {};
			\node[dot](r1) at (.7,.6) {};
			\node[dot](l1) at (-0.7,0.6) {};
			\draw (l) to (l1) {};
			\draw (r) to (r1) {};
			\draw[contract] [bend left=40] (l1) to (r1) {};
		\end{tikzpicture}
\;.
\end{equ}
When glueing red nodes onto yellow nodes we will be in a setting in which we
use \eqref{eqn:intuitive-bound}. As the notation suggests, we also glue red
nodes together, and in this case we will use the estimate
\eqref{eqn:intuitive-2}. Glueing red nodes is
necessary, say in our running example, to decompose
\begin{equs}
		\begin{tikzpicture}[baseline=12]
			\node[root](l) at (-0.5,0.3) {};
			\node[root](r) at (.5,.3) {};
			\node[dot](r2) at (.5,.6) {};
			\node[dot](l2) at (-0.5,0.6) {};
			\draw (l) to (l2) {};
			\draw (r) to (r2) {};
			\draw[contract] [bend left=30] (l2) to (r2) {};
			\node[dot](r1) at (.7,.6) {};
			\node[dot](l1) at (-0.7,0.6) {};
			\draw (l) to (l1) {};
			\draw (r) to (r1) {};
			\draw[contract] [bend left=40] (l1) to (r1) {};
		\end{tikzpicture}
=
		\begin{tikzpicture}[baseline=12]
			\node[root](l) at (-0.5,0.3) {};
			\node[root](r) at (.5,.3) {};
			\node[dot](r2) at (.5,.6) {};
			\node[dot](l2) at (-0.5,0.6) {};
			\draw (l) to (l2) {};
			\draw (r) to (r2) {};
			\draw[contract] [bend left=30] (l2) to (r2) {};
		\end{tikzpicture}
\glue 		\begin{tikzpicture}[baseline=12]
			\node[root](l) at (-0.5,0.3) {};
			\node[root](r) at (.5,.3) {};
			\node[dot](r1) at (.7,.6) {};
			\node[dot](l1) at (-0.7,0.6) {};
			\draw (l) to (l1) {};
			\draw (r) to (r1) {};
			\draw[contract] [bend left=40] (l1) to (r1) {};
		\end{tikzpicture} \;.
\end{equs}
The glueing procedure we just informally suggested is formalised like this.

\begin{definition}\label{def:glueging-def}
Let $ \mG^{(1)}, \mG^{(2)} \in \mF $ and consider $ A^{(1)} \subseteq \mV_{\<yellow>}^{(1)} \cup
\mV_{\<red>}^{(1)}$, $A^{(2)} \subseteq
\mV_{\<red>}^{(2)}$. 
Given a bijection $\carrow \colon A^{(1)} \to A^{(2)}$, we define $$\CG =
(\mV, \mE)=
\mG^{(1)} \glue \mG^{(2)} = (\CG^{(1)} \sqcup
\CG^{(2)}) / {\carrow}\;,$$ the latter being the equivalence relation
generated by $v \sim \carrow(v)$ for all $v \in A^{(1)}$. 
In this way we also obtain two emebddings $ \mf{e}^{(1)}, \mf{e}^{(2)} $ with
$ \mf{e}^{(i)} \colon \mG^{(i)} \hookrightarrow \mG^{(1)} \glue \mG^{(2)} $,
which we can combine to a map (which is no longer
injective)
$$ \mf{e} \colon \mG^{(1)} \sqcup
\mG^{(2)} \to \mG^{(1)} \glue \mG^{(2)}\;.$$ Finally, the colour assigned to an
identified node $v \in \mV$, i.e.\ some $ v $ belonging to  $ \mf{e} A^{(1)} $, is
\begin{equ}[e:colouring]
c(v)  = \<black>, \ \  \text{ if } v \in \mf{e} \mV^{(1)}_{\<yellow>} \;, \qquad 
c(v)  = \<red>, \ \  \text{ if } v \in \mf{e}\mV^{(1)}_{\<red>} \;.
\end{equ}
\end{definition}
Here and in what follows we write $ \mf{e} A $ short for $ \mf{e}(A) $.
In general, via this procedure the new graph $ \mG $ may not belong to the space of
forests $ \mF $. We therefore 
introduce the following notion of
compatibility between the colours and the structure of $\mF $.
\begin{definition}\label{def:compatible}
A graph $ \mG \in \mF $ has \emph{compatible} colouring if for any $ v \in
\mV $
\begin{equs}
c(v) & = \<red>   \quad & & \Leftrightarrow v \text{ is a root, } \\
c(v) & = \<yellow>  \quad & & \Rightarrow v \text{ is neither a leaf nor a root,
i.e.\ an inner node.}
\end{equs}
\end{definition}

\begin{remark}
Note that the rooted forest structure of $\CG$ is uniquely determined by its colouring of
edges (black or green) and vertices (black, red, or yellow).
\end{remark}

From here on we always work with \emph{compatible} graphs. In particular, we
have the following result.
\begin{lemma}\label{lem:glueging-def}
Given compatible graphs $ \mG^{(1)}, \mG^{(2)} \in \mF $ and $\carrow$
as in Definition~\ref{def:glueging-def}, $\mG^{(1)} \glue \mG^{(2)}$ 
is a compatible element of $ \mF $, with the same
pairing as $ \mG^{(1)} \sqcup \mG^{(2)} $.
\end{lemma}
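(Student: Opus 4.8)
The plan is to verify, directly against Definition~\ref{def:glueging-def}, the three things packed into the statement: that $\mG \eqdef \mG^{(1)} \glue \mG^{(2)}$ is a rooted forest, hence belongs to $\mF$; that the pairing of $\mG$ is the image under $\mf{e}$ of the pairing of $\mG^{(1)} \sqcup \mG^{(2)}$; and that the colouring prescribed by \eqref{e:colouring} is compatible in the sense of Definition~\ref{def:compatible}. The first observation, which makes everything painless, is that $\glue$ only ever attaches material \emph{below} vertices of $\mG^{(1)}$: it never creates a new ancestor of an existing vertex. In particular no leaf is ever disturbed --- indeed, by compatibility of $\mG^{(1)}$ the yellow vertices of $A^{(1)}$ are inner nodes and the red ones are roots, and $A^{(2)} \subseteq \mV_{\<red>}^{(2)}$ consists of roots --- so the leaves of $\mG^{(1)}$ and of $\mG^{(2)}$ embed disjointly into $\mG$ through $\mf{e}$, the pairing $\gamma^{(1)} \sqcup \gamma^{(2)}$ descends to a pairing $\gamma$ of the leaves of $\mG$, and $\l(\mG) = \l(\mG^{(1)}) + \l(\mG^{(2)}) \in 2\NN$. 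This is the only place the parity condition of $\mF$ enters.

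For the forest structure: $\mG^{(2)}$ is a disjoint union of rooted trees, and each $r \in A^{(2)}$ is the root of one of them, say $\tau_r$. Since $\carrow$ is a bijection, glueing amounts to grafting, for each $v \in A^{(1)}$, the tree $\tau_{\carrow(v)}$ below $v$ --- concretely, identifying the root $\carrow(v)$ with $v$ and reattaching its children to $v$ --- where the trees $\tau_{\carrow(v)}$, $v \in A^{(1)}$, are pairwise disjoint. Grafting pairwise disjoint rooted trees below distinct vertices of a rooted forest produces no cycle and again yields a rooted forest; its roots are precisely the roots of $\mG^{(1)}$ together with those roots of $\mG^{(2)}$ not lying in $A^{(2)}$, and its leaves are as in the previous paragraph. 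Hence $\mG \in \mF$, carrying the pairing $\gamma$.

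It remains to check that the colouring is compatible, which I would do by a case distinction on a vertex $v$ of $\mG$. If $v \notin \mf{e} A^{(1)}$, then $v$ keeps both the colour and --- since $\glue$ only adds descendants --- the root/inner/leaf status it had in $\mG^{(1)}$ or in $\mG^{(2)}$, so compatibility is inherited; note that a red vertex here is a root of $\mG^{(1)}$ or a root of $\mG^{(2)}$ outside $A^{(2)}$, hence a root of $\mG$. If $v \in \mf{e} A^{(1)}$ comes from a red vertex of $\mG^{(1)}$, it was a root, it stays a root after grafting below it, and \eqref{e:colouring} keeps it red: compatible. If $v \in \mf{e} A^{(1)}$ comes from a yellow vertex of $\mG^{(1)}$, it was an inner node, stays an inner node, and \eqref{e:colouring} recolours it black, which imposes no constraint: again compatible. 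Together with the description of the roots of $\mG$ from the previous paragraph, this gives ``$c(v) = \<red> \Leftrightarrow v$ is a root'' and ``$c(v) = \<yellow> \Rightarrow v$ is an inner node'', i.e.\ $\mG$ is compatible. I do not expect a genuine obstacle here; the only thing demanding care is exactly this bookkeeping of which vertices change role under the identification, and it is kept under control by the observation that $\glue$ acts only downwards, so roots of $\mG^{(1)}$ remain roots and leaves are untouched.
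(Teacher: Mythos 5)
Your argument is correct and follows essentially the same route as the paper's (much briefer) proof: roots of $\mG^{(2)}$ are only ever glued onto roots or inner nodes of $\mG^{(1)}$, so the forest structure and the leaf pairing are preserved, and the recolouring rule \eqref{e:colouring} keeps the colouring compatible. Your version simply spells out the bookkeeping (leaves untouched, roots of $\mG$ identified, case check on recoloured vertices) that the paper leaves implicit.
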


\begin{proof}
Since $ \mV_{\<red>}^{(2)} $ consists of roots and since these are
always glued onto either roots or inner nodes by Definition~\ref{def:compatible},
$\mG^{(1)} \glue \mG^{(2)}$ again has a natural forest structure. The rule \eqref{e:colouring} furthermore
guarantees that the compatibility property
is preserved. (Note also that no new yellow nodes are created.)
\end{proof}


\textbf{Valuation.} Now we define a valuation $ | \mG | $ of a compatible graph $ \mG \in \mF $ as follows:
\begin{claim}
\item Consider the kernel $ K_{\mG} $ defined by the edges of $\CG$ and integrate it over all 
the black variables (i.e.\ variables associated to black vertices), 
with all remaining variables fixed.
\item Second, integrate over all the spatial components of the yellow variables.
\item Third, take the supremum over the temporal components of the yellow variables.
\item Finally, take the supremum over the spatial components of red variables.
\end{claim}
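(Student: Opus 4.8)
The statement is a definition, so the plan is to check that the four operations are legitimate in the stated order and produce a well--defined quantity $|\mG|\in[0,+\infty]$ --- a function of the temporal components $t_{\mV_{\<red>}}$ of the red vertices, since Step~4 removes only their spatial components (and possibly a distribution in $t_{\mV_{\<red>}}$ when a single--vertex tree carries a factor $\delta(\cdot)$ from \eqref{eqn:contraction-kernel}, which is harmless since red vertices always sit at strictly positive times in the applications). First I would fix $t_{\mV_{\<red>}}$ and integrate out the $\delta(s)$--factors on the black leaves --- which merely fixes those times to $0$ --- so that $K_{\mG}=\prod_{e\in\mE}K_{c(e)}(z_{e_+}-z_{e_-})$ becomes a nonnegative lower semicontinuous function of the remaining space--time variables, using here the reduction $\phi^\ve\ge0$ that makes \eqref{eqn:contraction-kernel} nonnegative. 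Step~1 (integrating the black variables $z_{\mV_{\<black>}}$) is then legitimate by Tonelli and, independently of the order of integration, returns a $[0,+\infty]$--valued lower semicontinuous function of $(t_{\mV_{\<yellow>}},x_{\mV_{\<yellow>}},t_{\mV_{\<red>}})$, lower semicontinuity surviving integration of nonnegative integrands by Fatou. Step~2 is again Tonelli, integrating $x_{\mV_{\<yellow>}}$; and in Steps~3 and~4 the suprema over the continuous parameters $t_{\mV_{\<yellow>}}$ and $x_{\mV_{\<red>}}$ agree with suprema over countable dense sets, the functions being lower semicontinuous in these parameters, hence are Borel. This gives well--definedness; moreover when $\mG$ has no yellow vertices one has the clean identity $|\mG|=\sup_{x_{\mV_{\<red>}}}\mG(z_{\mV_{\<red>}})$, which is exactly what one wants to bound for a fully contracted graph such as $\sup_{x}\E|X^\tau_\ve(t,x)|^2$.

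Next I would explain the point of this particular order (black integration, yellow spatial integration, yellow temporal supremum, red spatial supremum), which I expect the paper to exploit immediately: it makes $|\cdot|$ submultiplicative under the glueing of Definition~\ref{def:glueging-def}, that is,
\begin{equ}
|\mG^{(1)}\glue\mG^{(2)}|\lesssim|\mG^{(1)}|\,|\mG^{(2)}|
\end{equ}
up to lower--order factors. The mechanism: by Lemma~\ref{lem:glueging-def} and the colour rule \eqref{e:colouring}, the black vertices of $\mG=\mG^{(1)}\glue\mG^{(2)}$ are the images of the black vertices of the two pieces together with $\mf e(A^{(1)}\cap\mV_{\<yellow>}^{(1)})$, and $K_{\mG}$ factors as $K_{\mG^{(1)}}\cdot K_{\mG^{(2)}}$ along $\mf e$. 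I would first integrate out the interior variables of $\mG^{(2)}$ --- legitimate because the roots of $\mG^{(2)}$ (the vertices of $A^{(2)}$) are $\le$--minimal within $\mG^{(2)}$ --- obtaining $\mG^{(2)}$ evaluated at the glued vertices; then process each glued vertex $v$. If $v$ was yellow in $\mG^{(1)}$, hence black in $\mG$, I would pick $x_v$ and apply \eqref{eqn:intuitive-bound} with $F$ the $\mG^{(2)}$--factor, receiving the spatial supremum (the red operation of $|\mG^{(2)}|$ at $v$), and $G$ the $\mG^{(1)}$--factor, receiving the spatial integration; if $v$ was red in $\mG^{(1)}$, hence red in $\mG$, I would use \eqref{eqn:intuitive-2} to factor the spatial supremum. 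Peeling off the glued vertices one at a time exhausts the $\mG^{(2)}$--factor, turning it into $|\mG^{(2)}|$, and leaves on the remaining factor exactly the operations defining $|\mG^{(1)}|$.

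The hard part is not the analytic tools --- Tonelli, Fatou, and the two inequalities \eqref{eqn:intuitive-bound}--\eqref{eqn:intuitive-2} --- but the bookkeeping around the temporal variables and colours. At a glued yellow vertex $v$ one must track the residual time $t_v$: for a genuine degree--two yellow vertex it is absorbed by the semigroup identity for $P^1$ (the vertex ``disappears under spatial integration''), whereas otherwise it survives inside the supremum factors produced above and has to be matched by the yellow temporal supremum of Step~3 acting on the $|\mG^{(1)}|$--factor, at worst at the cost of a $\int_0^T\ud t\le T$ slack --- in the applications just a power of $\log\ve^{-1}$, negligible against the gained $\ve$--powers, but the reason the estimate is only submultiplicative up to such factors. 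I would also need to check that no new yellow vertices appear, that \eqref{e:colouring} makes the operation owed at each glued vertex by $|\mG|$ coincide with the one produced by the factorisation, that the forest partial order on $\mV$ permits integrating $\mG^{(2)}$'s interior before touching the glued vertices, and the minor point that a glued vertex at time $0$ carries a $\delta(\cdot)$ from \eqref{eqn:contraction-kernel} --- harmless precisely because $|\cdot|$ never integrates or takes suprema over temporal components of red vertices. With this bookkeeping set up, iterating the bound along a decomposition of $\mG$ into building blocks in $\mF_{3}$ is mechanical.
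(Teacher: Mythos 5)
You were asked about a definition, and the paper accordingly offers no proof beyond recording the formula $|\CG|(t_{\<red>})=\sup_{x_{\<red>},\,t_{\<yellow>}}\int K_{\CG}(z)\,\ud z_{\<black>}\,\ud x_{\<yellow>}$; your check that the four operations are legitimate in the stated order (nonnegativity of the kernels after the reduction $\phi^\ve\geqslant 0$, Tonelli for the black and yellow spatial integrations, measurability of the suprema) is correct and consistent with what the paper tacitly assumes, and your observation that $|\mG|$ dominates the pointwise evaluation when all non-root vertices are black is exactly Remark~\ref{rem:evaluation}.

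One correction to your account of how the definition is then exploited: Lemma~\ref{lem:product} does \emph{not} state that $|\mG^{(1)}\glue\mG^{(2)}|\lesssim|\mG^{(1)}|\cdot|\mG^{(2)}|$ up to negligible lower-order factors. For an onto glueing it gives $|\mG|(t_{\mV_{\<red>}})\leqslant |\mG^{(1)}|(t_{\mf{e}\mV^{(1)}_{\<red>}})\cdot\int_{[0,\mf{t}]^{\mf{e}\mV^{(1)}_{\<yellow>}}}|\mG^{(2)}|(t_{\mf{e}\mV^{(2)}_{\<red>}})\,\ud t_{\mf{e}\mV^{(1)}_{\<yellow>}}$: the temporal variables of the glued yellow vertices are \emph{integrated}, with $|\mG^{(2)}|$ evaluated at them inside the integral, rather than being absorbed at the cost of a factor $\int_0^T\ud t\leqslant T$. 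In the application these very integrals produce the factors $\bigl(\int_0^t(s\vee\ve^2)^{-d/2}\,\ud s\bigr)^{\l(\tau)-1}$ in \eqref{est:ttg} and Proposition~\ref{prop:momentX}, i.e.\ they carry the main content of the moment bound (the $\Gamma_{d/2}$ factors that determine the time scale $t_1(\ve)$), not a slack that is ``negligible against the gained $\ve$-powers''. The clean multiplicative bound you state holds only in the special case without yellow vertices, which is Remark~\ref{rem:multiplicative}. Apart from this, your description of the mechanism — factorising $K_{\mG}$ along $\mf{e}$, giving the spatial supremum in \eqref{eqn:intuitive-bound} to the $\mG^{(2)}$-factor and the spatial integration to the $\mG^{(1)}$-factor at a glued yellow vertex, and using \eqref{eqn:intuitive-2} at glued red vertices — does match the paper's proof of Lemma~\ref{lem:product}.
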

Clearly, the result of such a valuation will depend only on the red temporal variables $ | \mG | = |
\mG |(t_{\<red>}) $. In formulae
\begin{equ}
|\CG|(t_{\<red>}) = \sup_{x_{\<red>},
t_{\<yellow>}} \int K_\CG(z) \ud z_{\<black>} \ud
x_{\<yellow>}\;.
\end{equ}
\begin{remark}\label{rem:evaluation}
This definition is consistent with our evaluation of the graph at a given
point in the sense that $ \mG (z_{\<red>}) \leqslant  |\mG|(t_{\<red>}) $ if all
nodes of $ \mG $ apart from the red roots are coloured black.
\end{remark}
Now, our aim will be to combine the valuation with the glueing of two graphs
and obtain an estimate of the sort $ | \mG^{(1)} \glue \mG^{(2)} | \leqslant |
\mG^{(1)} | \cdot | \mG^{(2)} | $. Of course, in this form the estimate cannot be true, 
since the
right-hand side has more free (red) variables than the left-hand side; these additional
free variables will be integrated over. In addition, in order to obtain a
useful bound we have to also take care of the time supremum over $
t_{\<yellow>} $: here it will be natural to assume that all yellow nodes are
covered by the glueing procedure, and we call such a glueing \emph{onto}. 
we capture this in the following notation.

\begin{definition}\label{def:onto}
Consider two compatible graphs $ \mG^{(1)} , \mG^{(2)} \in \mF $ and $ A^{(1)} \subseteq
\mV_{\<yellow>}^{(1)} \cup \mV_{\<red>}^{(1)}, A^{(2)} \subseteq
\mV^{(2)}_{\<red>} $, together with a bijection $ \carrow \colon A^{(2)} \to
A^{(1)} $. 
If $$ \mV_{\<yellow>}^{(1)} \subseteq 
A^{(1)}, \qquad \mV_{\<yellow>}^{(2)}= \emptyset \;, $$
then we say that $ \carrow $ defines an \textbf{\emph{onto}} glueing of $ \mG^{(2)} $ onto $
\mG^{(1)} $. 
In particular, for $ \mG = \mG^{(1)} \glue \mG^{(2)} = (\mV, \mE) $ it holds
that $ \mV_{\<yellow>} = \emptyset $.
\end{definition}
Now the key result for our bounds is the following lemma. Here we introduce the
temporal integration bound
$\mf{t} = \max_{v \in \mV_{\<red>}} t_{v}$.

\begin{lemma}\label{lem:product}
Consider two compatible $ \mG^{(1)}, \mG^{(2)} \in \mF $ and an onto glueing $
\carrow $. Then for $ \mG = \mG^{(1)} \glue \mG^{(2)} $ we have
\begin{equs}
| \mG |(t_{\mV_{\<red>}})
 \leqslant | \mG^{(1)} | ( t_{ \mf{e} \mV_{\<red>}^{(1)}}) \cdot
\int_{[0, \mf{t}]^{\mf{e}\mV^{(1)}_{\<yellow>}}} |
\mG^{(2)} | (t_{ \mf{e} \mV^{(2)}_{\<red>}})\ud t_{\mf{e} \mV^{(1)}_{\<yellow>}}
\;.
\end{equs}
\end{lemma}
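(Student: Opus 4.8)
The plan is to unfold the definition of the valuation $|\mG|$ on the glued graph and then separate the integral into the two pieces coming from $\mG^{(1)}$ and $\mG^{(2)}$, paying careful attention to which variables are shared. First I would use that the glueing is \emph{onto}: by Definition~\ref{def:onto} we have $\mV_{\<yellow>}=\emptyset$ in $\mG$, so the valuation $|\mG|(t_{\<red>})$ is simply the integral of $K_\mG$ over all black variables, followed by the supremum over the spatial components of the red variables. Since $\mE = \mf{e}\mE^{(1)} \sqcup \mf{e}\mE^{(2)}$ (no new edges are created by glueing, and green edges are preserved by Lemma~\ref{lem:glueging-def}), the kernel factorises as $K_\mG(z) = K_{\mG^{(1)}}(\mf{e}^{(1)}{}^{*}z)\cdot K_{\mG^{(2)}}(\mf{e}^{(2)}{}^{*}z)$, where $\mf{e}^{(i)}{}^{*}$ denotes pullback of the variable assignment along the embedding.

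Next I would identify the variables. Write $v \in \mf{e}A^{(1)}$ for the identified nodes; by \eqref{e:colouring} these are black if they came from $\mV^{(1)}_{\<yellow>}$ and red if they came from $\mV^{(1)}_{\<red>}$. The black vertices of $\mG$ are: the black vertices of $\mf{e}\mG^{(1)}$, the black vertices of $\mf{e}\mG^{(2)}$, and the identified nodes $\mf{e}\mV^{(1)}_{\<yellow>}$ (which in $\mG^{(2)}$ were red, hence \emph{not} black there). So when we integrate $K_\mG$ over $z_{\<black>}$, the variables $z_{\mf{e}\mV^{(1)}_{\<yellow>}}$ are integrated over all of $\RR^{d+1}$, whereas in the valuation $|\mG^{(2)}|$ these same nodes are red, so their temporal components are held fixed and their spatial components are under a supremum. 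This is exactly the source of the residual $\int_{[0,\mf t]^{\mf e\mV^{(1)}_{\<yellow>}}}\!\ud t_{\mf e\mV^{(1)}_{\<yellow>}}$ on the right-hand side: restricting the temporal integration of these nodes to $[0,\mf t]$ is legitimate because $K_{\myarcb}$ vanishes unless $t_{e_+}>t_{e_-}$, so along every path the temporal coordinates are ordered and bounded above by $\mf t = \max_{v\in\mV_{\<red>}}t_v$; and replacing the spatial integral $\int_{\RR^d}\ud x_v$ by a supremum over $x_v$ is valid only after we have first integrated out everything downstream in $\mG^{(2)}$ — this is precisely the role of the bound \eqref{eqn:intuitive-bound}, applied with $F$ the $\mG^{(1)}$-block (supped in $x_v$) and $G$ the $\mG^{(2)}$-block (integrated in $x_v$).

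Concretely, I would proceed in the following order. (i) Fix the temporal components $t_{\mf e\mV^{(1)}_{\<yellow>}}$ and integrate $K_{\mG^{(2)}}$ over its black variables; the remaining free variables are $z_{\mf e\mV^{(2)}_{\<red>}}$, among which sit the spatial coordinates $x_{\mf e\mV^{(1)}_{\<yellow>}}$. (ii) For each identified node $v\in \mf e\mV^{(1)}_{\<yellow>}$, apply \eqref{eqn:intuitive-bound} to pull the supremum over $x_v$ out of the $\mG^{(1)}$-factor and leave $\int_{\RR^d}\ud x_v$ acting on the $\mG^{(2)}$-factor; doing this for all such $v$ turns the $\mG^{(2)}$-factor into $|\mG^{(2)}|(t_{\mf e\mV^{(2)}_{\<red>}})$ up to the supremum over the $x_{\mf e\mV^{(2)}_{\<red>}}$ that are genuinely red in $\mG$ (namely those in $\mf e\mV^{(1)}_{\<red>}$, which are shared, and the unshared red nodes of $\mG^{(2)}$). (iii) The leftover $\mG^{(1)}$-factor, once the black variables of $\mf e\mG^{(1)}$ are integrated out and the spatial suprema over its yellow-then-red nodes are taken, is bounded by $|\mG^{(1)}|(t_{\mf e\mV^{(1)}_{\<red>}})$. (iv) Finally integrate the surviving $t_{\mf e\mV^{(1)}_{\<yellow>}}$ over $[0,\mf t]^{\mf e\mV^{(1)}_{\<yellow>}}$, using the ordering property of $K_{\myarcb}$ noted above to justify the restriction of the domain.

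The main obstacle I anticipate is purely bookkeeping rather than analytic: one must check that the interchange of "integrate out the $\mG^{(2)}$-block, then sup over $x_v$" with "sup over $x_v$, then integrate the $\mG^{(1)}$-block" is carried out in a consistent order for \emph{all} identified yellow nodes simultaneously, and that the valuation of $\mG^{(1)}$ really does factor off once its own black and yellow variables are handled — in particular that no edge of $\mG^{(1)}$ touches a variable that is only integrated (not supped) because of its role in $\mG^{(2)}$. The onto hypothesis $\mV^{(2)}_{\<yellow>}=\emptyset$ and $\mV^{(1)}_{\<yellow>}\subseteq A^{(1)}$ is exactly what makes this consistent: it guarantees that every yellow node of either graph is among the identified nodes, so after glueing there are no stray yellow time-suprema to track, and the two blocks interact only through the shared red and (now-black) identified nodes. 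Once this is set up, the inequality follows by iterating \eqref{eqn:intuitive-bound} and \eqref{eqn:intuitive-2} edge-block by edge-block.
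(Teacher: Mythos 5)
Your skeleton is the same as the paper's: onto-ness gives $\mV_{\<yellow>}=\emptyset$ in $\mG$, the kernel factorises along the edges of the two pieces, the black variables of $\mG$ split as $\mf{e}\mV^{(1)}_{\<black>}\sqcup\mf{e}\mV^{(2)}_{\<black>}\sqcup\mf{e}\mV^{(1)}_{\<yellow>}$, the ordering of times forced by the directed heat-kernel edges restricts the residual temporal integration to $[0,\mf{t}]$, and \eqref{eqn:intuitive-bound} is used to split the shared spatial integration. However, you apply \eqref{eqn:intuitive-bound} the wrong way round: you put the supremum over $x_v$, $v\in\mf{e}\mV^{(1)}_{\<yellow>}$, on the $\mG^{(1)}$-block and leave $\int_{\R^d}\ud x_v$ on the $\mG^{(2)}$-block. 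By the definition of the valuation, \emph{yellow} spatial variables are integrated and \emph{red} spatial variables are supped; the identified nodes are yellow in $\mG^{(1)}$ and red in $\mG^{(2)}$, so to recover the quantities $|\mG^{(1)}|$ and $|\mG^{(2)}|$ appearing in the lemma you must keep the spatial integral with the $\mG^{(1)}$-factor and apply the supremum to the $\mG^{(2)}$-factor, i.e.\ take $F$ to be the $\mG^{(2)}$-block and $G$ the $\mG^{(1)}$-block in \eqref{eqn:intuitive-bound}.

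Because of this swap, your steps (ii) and (iii) fail as written: integrating the $\mG^{(2)}$-factor over the spatial variables of its (red) shared nodes does not produce $|\mG^{(2)}|(t_{\mf{e}\mV^{(2)}_{\<red>}})$, since the valuation takes a supremum there and, for a general nonnegative kernel, neither of $\sup_{x}$ and $\int \ud x$ dominates the other; likewise, bounding the $\mG^{(1)}$-factor with spatial \emph{suprema} over its yellow nodes by $|\mG^{(1)}|$, which \emph{integrates} those variables, is not a valid inequality. The estimate \eqref{eqn:intuitive-bound} is of course true with either choice of which factor to sup, but only the choice opposite to yours makes the two resulting blocks coincide with the valuations on the right-hand side of the lemma. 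If you exchange the roles of the two blocks consistently through (ii)--(iii), your argument becomes exactly the paper's proof.
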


\begin{remark}\label{rem:notation} We observe that the temporal variables 
on which the valuation depends are inherited from the glued graph $ \mG
$, thus determining a pairing among variables of $ \mG^{(1)} $ and $
\mG^{(2)} $. This is captured by the map $ \mf{e} $. We note in
particular that $ \mf{e} \mV_{\<yellow>}^{(1)} \subseteq \mf{e}
\mV^{(2)}_{\<red>} $ since the glueing is onto, as well as $ \mf{e}
\mV_{\<red>}^{(1)} \subseteq \mV_{\<red>} $. In fact, we can write $
\mf{e} \mV_{\<red>}^{(2)} $ as the disjoint union
\[ \mf{e} \mV_{\<red>}^{(2)} = \left( \mf{e} \mV_{\<yellow>}^{(1)} \cap
\mf{e}\mV_{\<red>}^{(2)} \right)
\sqcup \left(  \mV_{\<red>} \cap \mf{e} \mV_{\<red>}^{(2)}  \right) \;. \] 
\end{remark}

\begin{remark}\label{rem:multiplicative}
A special case is given by $\mV_{\<yellow>}^{(1)} = \mV_{\<yellow>}^{(2)} = \emptyset$
in which case one has $\CV_{\<red>} = \CV_{\<red>}^{(1)} \sqcup \CV_{\<red>}^{(1)}$
and $|\CG| \le |\CG^{(1)}|\cdot |\CG^{(2)}|$.
\end{remark}

\begin{proof}
Since by assumption $ \carrow $ is onto, we have $ \mV_{\<yellow>} =
\emptyset $ and thus
\begin{equs}
| \mG | ( t_{\mV_{\<red>}}) = \sup_{x_{\CV_{\<red>}}} \int K_\CG(z) \ud z_{\CV_{\<black>}} \;.
\end{equs}
Note first that we can write
\begin{equ}
\mV_{\<black>} = \mf{e} \mV_{\<black>}^{(1)} \sqcup \mf{e}\mV_{\<black>}^{(2)} \sqcup
\mf{e}\mV_{\<yellow>}^{(1)}\;,
\end{equ}
by the assumptions on
the colouring of $ \mG^{(1)}, \mG^{(2)} $ (in particular, recall that $
\mf{e} \mV_{\<yellow>}^{(1)} \subseteq \mV_{\<black>} $ because we colour nodes black
after glueing).
As a consequence, we have
\begin{equs}
\int K_{\mG}(z) \ud z_{\mV_{\<black>}} = \int \bigg( \int K_{\mG^{(1)}}
(z) \ud z_{\mf{e}\mV_{\<black>}^{(1)}}  \bigg) \cdot \bigg( \int
K_{\mG^{(2)}}(z) \ud z_{\mf{e} \mV_{\<black>}^{(2)}} \bigg) \ud
z_{\mf{e} \mV_{\<yellow>}^{(1)}} \;,
\end{equs}
where we used that $ \mf{e} \mV_{\<black>}^{(1)} $ and $
\mf{e} \mV_{\<black>}^{(2)} $ do not intersect. It then follows from
\eqref{eqn:intuitive-bound} that
\begin{equs}
\int & K_{\mG}(z) \ud z_{\mV_{\<black>}} \\
& = 
\int_{\RR^{\mf{e}\mV_{\<yellow>}^{(1)}}} 
 \int_{(\RR^{d})^{\mf{e} \mV^{(1)}_{\<yellow>}}}\bigg( \int K_{\mG^{(1)}}
(z) \ud z_{\mf{e}\mV_{\<black>}^{(1)}}  \bigg) \cdot \bigg( \int
K_{\mG^{(2)}}(z) \ud z_{\mf{e}\mV_{\<black>}^{(2)}} \bigg) \ud
x_{\mf{e}\mV_{\<yellow>}^{(1)} }\ud t_{\mf{e}\mV_{\<yellow>}^{(1)}} \\
& \leqslant  \int_{\RR^{\mf{e} \mV_{\<yellow>}^{(1)}}} 
 \bigg(\int K_{\mG^{(1)}}
(z) \ud z_{\mf{e}\mV_{\<black>}^{(1)}}   \ud
x_{\mf{e}\mV_{\<yellow>}^{(1)} }\bigg) \cdot \bigg( \sup_{x_{\mf{e}\mV_{\<yellow>}^{(1)}}} \int
K_{\mG^{(2)}}(z) \ud z_{\mf{e}\mV_{\<black>}^{(2)}} \bigg)\ud t_{\mf{e}\mV_{\<yellow>}^{(1)}} 
\;.
\end{equs}
Next, observe that $ \mf{e}\mV^{(1)}_{\<yellow>} \subseteq
\mf{e}\mV^{(2)}_{\<red>} $, and that from the definition of the kernel the time
variables are ordered so that $ t_{v} \in [0, \mf{t}] $ for any $
v \in \mV $. In particular the previous estimate is bounded by
\begin{equs}
\int & K_{\mG}(z) \ud z_{\mV_{\<black>}} \\
& \leqslant \int_{[0, \mf{t}]^{\mf{e} \mV_{\<yellow>}^{(1)}}} 
\bigg(\int K_{\mG^{(1)}}
(z) \ud z_{\mf{e}\mV_{\<black>}^{(1)}}   \ud
x_{\mf{e}\mV_{\<yellow>}^{(1)} }\bigg) \cdot \bigg( \sup_{x_{\mf{e}\mV_{\<red>}^{(2)}}} \int
K_{\mG^{(2)}}(z) \ud z_{\mf{e}\mV_{\<black>}^{(2)}} \bigg)\ud t_{\mf{e}\mV_{\<yellow>}^{(1)}} 
\\
&\leqslant 
| \mG^{(1)}|(t_{\mf{e}\mV^{(1)}_{\<red>}})
\int_{[0, \mf{t}]^{\mf{e} \mV_{\<yellow>}^{(1)}}} 
| \mG^{(2)}| ( t_{\mf{e}\mV^{(2)}_{\<red>}}) \ud t_{\mf{e}\mV_{\<yellow>}^{(1)}}\;,
\end{equs}
as required.
\end{proof}


\textbf{Running example.} Finally, we can use Lemma~\ref{lem:product} to
conclude the estimates for \eqref{eqn:32nd}. It is important to remark that in all cases of our interest the supremum over
$ t_{\mV_{\<yellow>}^{(1)}} $ in the valuation of $ \mG^{(1)} $ will be superfluous, by an application of the semigroup
property. This can be swiftly seen by proceeding in the calculation for our
running example. Since no confusion can occur, we now write 
$t$ instead of $\mf{t}$, observing that the latter
does indeed coincide in our case of interest with the temporal variable of the 
only red node present in the original graph. By \eqref{e:basicglue} and Lemma~\ref{lem:product},
we obtain
\begin{equs}
		\begin{tikzpicture}[baseline=8]
			\node[root](root) at (0,0) {};
			\node[dot](l) at (-0.5,0.3) {};
			\node[dot](r) at (.5,.3) {};
			\node[dot](r1) at (.7,.6) {};
			\node[dot](r2) at (.5,.6) {};
			\node[dot](r3) at (.3,.6) {};
			\node[dot](l1) at (-0.7,0.6) {};
			\node[dot](l2) at (-0.5,0.6) {};
			\node[dot](l3) at (-0.3,0.6) {};
			\draw (l) to (root) to (r) {};
			\draw (l) to (l1) {};
			\draw (l) to (l2) {};
			\draw (l) to (l3) {};
			\draw (r) to (r1) {};
			\draw (r) to (r2) {};
			\draw (r) to (r3) {};
			\draw[contract] [bend left=40] (l1) to (r1) {};
			\draw[contract] [bend left=30] (l2) to (r2) {};
			\draw[contract] [bend left=10] (l3) to (r3) {};
		\end{tikzpicture}
\leqslant \left\vert
		\begin{tikzpicture}[baseline=5]
			\node[root](root) at (0,0) {};
			\node[innern](l) at (-0.5,0.3) {};
			\node[innern](r) at (.5,.3) {};
			\node[dot](r3) at (.3,.6) {};
			\node[dot](l3) at (-0.3,0.6) {};
			\draw (l) to (root) to (r) {};
			\draw (l) to (l3) {};
			\draw (r) to (r3) {};
			\draw[contract] [bend left=10] (l3) to (r3) {};
		\end{tikzpicture} \right\vert (t)  \cdot \int_{[0, t]^2} \left\vert
		\begin{tikzpicture}[baseline=12]
			\node[root](l) at (-0.5,0.3) {};
			\node[root](r) at (.5,.3) {};
			\node[dot](r2) at (.5,.6) {};
			\node[dot](l2) at (-0.5,0.6) {};
			\draw (l) to (l2) {};
			\draw (r) to (r2) {};
			\draw[contract] [bend left=30] (l2) to (r2) {};
			\node[dot](r1) at (.7,.6) {};
			\node[dot](l1) at (-0.7,0.6) {};
			\draw (l) to (l1) {};
			\draw (r) to (r1) {};
			\draw[contract] [bend left=40] (l1) to (r1) {};
	\end{tikzpicture} \right\vert (
t_{\<red>}) \ud t_{\<red>} \;.
\end{equs}
Regarding the first factor, it follows from the semigroup property and the fact that
we integrate over all yellow spatial variables that
\begin{equ}[eqn:def-s]
\left|		\begin{tikzpicture}[baseline=5]
			\node[root](root) at (0,0) {};
			\node[innern](l) at (-0.5,0.3) {};
			\node[innern](r) at (.5,.3) {};
			\node[dot](r3) at (.3,.6) {};
			\node[dot](l3) at (-0.3,0.6) {};
			\draw (l) to (root) to (r) {};
			\draw (l) to (l3) {};
			\draw (r) to (r3) {};
			\draw[contract] [bend left=10] (l3) to (r3) {};
		\end{tikzpicture}
\right| (t)
= \left|		\begin{tikzpicture}[baseline=5]
			\node[root](root) at (0,0) {};
			\node[dot](r3) at (.3,.6) {};
			\node[dot](l3) at (-0.3,0.6) {};
			\draw (root) to (l3) {};
			\draw (root) to (r3) {};
			\draw[contract] [bend left=10] (l3) to (r3) {};
		\end{tikzpicture}
\right| (t) \leqslant  c^{2} (e^{t} \varepsilon^{\frac d2-
\alpha})^2( t\vee \varepsilon^2)^{-\frac d2} \eqdef
\mf{s}_{\ve}(t)\;,
\end{equ}
for come $ c > 0 $. Hence, using Remark~\ref{rem:multiplicative}, we have
\begin{equs}
\bigg|
		\begin{tikzpicture}[baseline=8]
			\node[root](root) at (0,0) {};
			\node[dot](l) at (-0.5,0.3) {};
			\node[dot](r) at (.5,.3) {};
			\node[dot](r1) at (.7,.6) {};
			\node[dot](r2) at (.5,.6) {};
			\node[dot](r3) at (.3,.6) {};
			\node[dot](l1) at (-0.7,0.6) {};
			\node[dot](l2) at (-0.5,0.6) {};
			\node[dot](l3) at (-0.3,0.6) {};
			\draw (l) to (root) to (r) {};
			\draw (l) to (l1) {};
			\draw (l) to (l2) {};
			\draw (l) to (l3) {};
			\draw (r) to (r1) {};
			\draw (r) to (r2) {};
			\draw (r) to (r3) {};
			\draw[contract] [bend left=40] (l1) to (r1) {};
			\draw[contract] [bend left=30] (l2) to (r2) {};
			\draw[contract] [bend left=10] (l3) to (r3) {};
		\end{tikzpicture}
\bigg| 
& \leqslant \mf{s}_{\ve}(t) \int_{[0, t]^2} 
 \left\vert
		\begin{tikzpicture}[baseline=12]
			\node[root](l) at (-0.5,0.3) {};
			\node[root](r) at (.5,.3) {};
			\node[dot](r2) at (.5,.6) {};
			\node[dot](l2) at (-0.5,0.6) {};
			\draw (l) to (l2) {};
			\draw (r) to (r2) {};
			\draw[contract] [bend left=30] (l2) to (r2) {};
		\end{tikzpicture}  \right\vert (t_{\<red>})
\cdot \left\vert \begin{tikzpicture}[baseline=12]
			\node[root](l) at (-0.5,0.3) {};
			\node[root](r) at (.5,.3) {};
			\node[dot](r1) at (.7,.6) {};
			\node[dot](l1) at (-0.7,0.6) {};
			\draw (l) to (l1) {};
			\draw (r) to (r1) {};
			\draw[contract] [bend left=40] (l1) to (r1) {};
		\end{tikzpicture} \right\vert \,
 (t_{\<red>}) \, \ud t_{\<red>}
\\
 &  \leqslant \mf{s}_{\ve}(t) \int_{[0, t]^2}
\mf{s}_{\ve} (t_{\<red>,1})\mf{s}_{\ve} (t_{\<red>,2})
\ud t_{\<red>}\\
&  \leqslant (t \vee \ve^{2})^{- \frac{d}{2}} (c e^{t}\varepsilon^{\frac d2- \alpha})^6
\left(\int_0^{t}(s\vee
\varepsilon^2)^{-\frac d2} \ud s\right)^2 \eqdef B_{\ve}(t) \;.
\end{equs}
The second graph on the right-hand side of \eqref{eqn:32nd} can be bounded by
$B_\eps(t)$ in an analogous way.
We recall that every smaller tree is evaluated at the temporal variable it has
inherited from the original large tree (so the last two trees have the same
contribution, but they are evaluated at different time variables). We omit writing the dependence on such
variables for convenience.
We conclude that
	\begin{equation}\label{est:EIY3}
		\E\<IY^3>^2(t,x) 
		\le15 B_\eps(t) \;.
	\end{equation}
Here, the number $15=6+9$
is the total number of ways of pairing $6$ green nodes.
In addition, we observe that $ B_{\ve}(t) $ can be rewritten, by a change of
variables, as
\begin{equ}
B_{\ve}(t) =(t \vee \ve^{2})^{- \frac{d}{2}} (c e^{t}\varepsilon^{\frac d2-
\alpha})^2 \bigg( c e^{t} \ve^{1- \alpha } \bigg( \int_0^{t \ve^{-2}}(s\vee
1)^{-\frac d2} \ud s \bigg)^{\frac{1}{2}} \bigg)^4  \;,
\end{equ}
where $ 4 = 2 (\ell(\tau) -1) $, for the trees $ \tau $ that we took under
consideration.
We now claim that an analogous bound holds for every tree $\tau$. More precisely, we have the following result.
		\begin{proposition}\label{prop:momentX} For every $\tau$ in $\tree_3$
and $ q \in [2, \infty) $,
		\begin{equ}\label{est:mXtau}
			\|X^\tau_{\ve}(t,x)\|_{L^q(\Omega)}\lesssim  (t\vee \varepsilon^2)^{-\frac d4}e^t \varepsilon^{\frac d2- \alpha} \(e^t \varepsilon^{1- \alpha}\Gamma_{d/2}^{1/2}(t\varepsilon^{-2})\)^{\ell(\tau)-1}
		\end{equ}
		and
		\begin{equ}\label{est:mDXtau}
			\|\nabla X^\tau_{\ve}(t,x)\|_{L^q(\Omega)}\lesssim  (t\vee \varepsilon^2)^{-\frac d4-\frac12}e^t \varepsilon^{\frac d2- \alpha} \(e^t \varepsilon^{1- \alpha}\Gamma_{d/2}^{1/2}(t\varepsilon^{-2})\)^{\ell(\tau)-1}\,,
		\end{equ}
		where for each $a>0$, $\Gamma_a(t)=\int_0^t(s\vee 1)^{-a} \ud s$.
	\end{proposition}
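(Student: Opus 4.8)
The plan is to reduce the claim to the case $q=2$ by hypercontractivity, and then to prove the $L^2$ bounds by induction on $i(\tau)$, carrying out for a general tree the computation already performed on the running example \eqref{eqn:32nd}. For the first reduction, note that by \eqref{eqn:Xtrunk}--\eqref{eqn:Xtau} the random variable $X^\tau_{\ve}(t,x)$ is, for each fixed $(t,x)$, a polynomial of degree $\ell(\tau)$ in the white noise $\eta$, hence it lies in the sum of the first $\ell(\tau)$ homogeneous Wiener chaoses; the same holds for $\nabla X^\tau_{\ve}(t,x)$, the derivative falling only on deterministic kernels. By the equivalence of all $L^q$-norms, $q\ge 2$, on a fixed sum of Wiener chaoses, $\|X^\tau_{\ve}(t,x)\|_{L^q(\Omega)}\lesssim_{q,\ell(\tau)}\|X^\tau_{\ve}(t,x)\|_{L^2(\Omega)}$, and analogously for the gradient, so it suffices to take $q=2$.

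The base case $\tau=\bullet$ is already contained in the preceding discussion: the estimate $\E\<Y>^2(t,x)\le (ce^t\ve^{d/2-\alpha})^2(t\vee\ve^2)^{-d/2}$ is exactly \eqref{est:mXtau} with $\ell(\bullet)=1$, while $\E|\nabla X^\bullet_{\ve}(t,x)|^2=e^{2t}\ve^{d-2\alpha}\|\nabla p^\ve_t\|_{L^2(\R^d)}^2$ and a direct estimate using that $\phi$ is Schwartz gives $\|\nabla p^\ve_t\|_{L^2}\lesssim(t\vee\ve^2)^{-d/4-1/2}$, which is \eqref{est:mDXtau}. For the inductive step, let $\tau=[\tau_1,\tau_2,\tau_3]$ and write $X^\tau_{\ve}(t,x)=-\int_0^t P^1_{t-s}(X^{\tau_1}_{\ve}X^{\tau_2}_{\ve}X^{\tau_3}_{\ve})(s)(x)\,\ud s$. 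Wick's theorem expresses $\E|X^\tau_{\ve}(t,x)|^2$ as the sum, over the $\pi_{\ell(\tau)}$ pairings $\gamma$ of the $2\ell(\tau)$ leaves of $[\tau]\sqcup[\bar\tau]$, of the evaluations $\mG_\gamma(t,x)$ at the (shared) red root of compatible graphs $\mG_\gamma\in\mF_3$; by Remark~\ref{rem:evaluation} each term is bounded by $|\mG_\gamma|(t)$. (In the gradient bound the root edge carries $\nabla_x P^1$ rather than $P^1$; nothing else changes.)

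It therefore remains to bound each $|\mG_\gamma|(t)$ by the square of the right-hand side of \eqref{est:mXtau}, and this is done exactly as for \eqref{eqn:32nd}. Peeling off the green edges one at a time, repeated application of Lemma~\ref{lem:product}, arranged so as to be \emph{onto} at every stage, decomposes $\mG_\gamma$ into a single ``spine'' subgraph --- the red root together with the two integration nodes and the chain of inner nodes joining them through one distinguished contraction, all recoloured yellow --- glued onto $\ell(\tau)-1$ two-red-rooted pieces carrying the remaining green edges (parallel ones combined red-to-red by Remark~\ref{rem:multiplicative}). By the semigroup property, integrating out the yellow spatial variables collapses the spine to the $\E\<Y>^2$-graph, contributing $\mf{s}_{\ve}(t)=c^2(e^t\ve^{d/2-\alpha})^2(t\vee\ve^2)^{-d/2}$ as in \eqref{eqn:def-s} (with an extra $(t\vee\ve^2)^{-1/2}$ in the gradient case), and each of the $\ell(\tau)-1$ pieces is controlled by \eqref{e:usefulBound} and, after its two red times are integrated over $[0,t]$ by Lemma~\ref{lem:product} and rescaled via $\int_0^t(s\vee\ve^2)^{-d/2}\,\ud s=\ve^{2-d}\Gamma_{d/2}(t\ve^{-2})$, contributes a factor $(ce^t\ve^{1-\alpha}\Gamma_{d/2}^{1/2}(t\ve^{-2}))^2$. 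Multiplying these and summing over the finitely many $\gamma$ gives \eqref{est:mXtau} and closes the induction. One could equally dispense with $\tau$ and prove a bound of this form for every $\mG\in\mF_3$ by induction on the number of green edges, splitting $\mG$ along one contraction into two elements of $\mF_3$ with strictly fewer green edges; the recursion for $X^\tau_{\ve}$ is then a special case.

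The only genuinely delicate point, and the main obstacle, is the combinatorics of this decomposition: one must verify that for every $\tau$ and every pairing $\gamma$ the graph $\mG_\gamma$ does admit a sequence of onto glueings yielding a spine that is a single chain of inner nodes --- so that it collapses under the semigroup identity, the branch points of the subtrees $[\tau_i]$ being absorbed as yellow nodes into the arguments of Lemma~\ref{lem:product} --- and residual pieces that are genuinely two-red-rooted with a single green edge, so that \eqref{e:usefulBound} applies; and one must track the $\ve$-powers carefully enough to check that the count produces exactly the exponent $2(\ell(\tau)-1)$. Granting this, the analytic input is no more than the estimates already carried out for \eqref{eqn:32nd}.
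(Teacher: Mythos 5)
Your overall strategy is the same as the paper's: reduce to $q=2$ by equivalence of moments on a fixed Wiener chaos, expand $\E|X^\tau_\ve(t,x)|^2$ by Wick's theorem into a sum over pairings $\gamma$ of valuations $|[\tau,\bar\tau]_\gamma|(t)$, and estimate each valuation by repeated onto glueings via Lemma~\ref{lem:product} together with the semigroup property, modelled on the computation for \eqref{eqn:32nd}. The problem is that what you defer -- ``granting'' that every $[\tau,\bar\tau]_\gamma$ admits a sequence of onto glueings of the asserted shape, and that the power count comes out to $2(\ell(\tau)-1)$ -- is precisely the content of the paper's proof. The paper constructs an explicit algorithm producing self-avoiding paths $p_0,\dots,p_{\ell(\tau)-1}$, each running from a current red root up to a leaf, across exactly one green edge, and back down to a red root; it checks that the removal step keeps the residual graph in $\mF_3$ with a compatible colouring (so the next path can be chosen and the glueing is onto), that the procedure terminates in exactly $\ell(\tau)$ steps, and that every inner node occurs in exactly three paths (twice as a red endpoint, once as a yellow node), while leaves occur once. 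None of this is supplied in your proposal, and your static description of the outcome -- one spine plus $\ell(\tau)-1$ two-red-rooted pieces each carrying a single green edge and controlled by \eqref{e:usefulBound} -- is only accurate for depth-one trees like the running example: for deeper trees the pieces are genuinely longer paths whose inner nodes must themselves serve as glueing sites for later pieces, and pairings that contract two leaves of the same subtree produce cyclic paths with a single red endpoint, a case covered in the paper by \eqref{eqn:bd-path} but not by your ``two-red-rooted'' claim.

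There is also a quantitative slip in the bookkeeping as you state it. The graph has only $\ell(\tau)-1$ inner time variables to integrate, yet your $\ell(\tau)-1$ pieces have $2(\ell(\tau)-1)$ red times which you propose to integrate ``each over $[0,t]$''. The correct accounting (and the reason the paper tracks the map $\mf{e}$ and the sharing of endpoints) is that each inner time is the red endpoint of exactly two paths, each contributing a factor $\mf{s}_\ve^{1/2}$ at that time; the product over paths is then $\prod_v \mf{s}_\ve(t_v)$ and each $t_v$ is integrated once, yielding $\bigl(\int_0^t\mf{s}_\ve(s)\,\ud s\bigr)^{\ell(\tau)-1}$ and hence the factor $\bigl(e^t\ve^{1-\alpha}\Gamma_{d/2}^{1/2}(t\ve^{-2})\bigr)^{2(\ell(\tau)-1)}$. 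Integrating each piece independently over its own two red times gives instead a quantity of the type $\bigl(\int_0^t\mf{s}_\ve^{1/2}\bigr)^2$ per piece, which does not reduce to $\int_0^t\mf{s}_\ve$ and is in general larger. Since you yourself flag the decomposition and the exponent count as ``the only genuinely delicate point'', the proposal identifies the right framework but leaves the proposition's actual proof -- the combinatorial algorithm and its bookkeeping -- unproved.
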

\begin{remark}
We observe that for $ t \geqslant \ve^{2}$ we have 
\begin{equ} 1 \leqslant \Gamma_{\frac{d}{2}} (t \ve^{-2}) \leqslant 1 +
\begin{cases} \log{(t \ve^{- 2})} \quad & \text{ if } d=2 , \\
 (1 - \frac{2}{d})^{-1} \quad & \text{ if } d > 2, \end{cases}
\end{equ}
which leads to the time scale $ t_{1}(\ve)$ (in particular to the constraint $\bar \alpha < 1$), 
since the estimates in
Proposition~\ref{prop:momentX} improve as $ \l(\tau) $ increases only for times
$ t \leqslant t_{1}(\ve) $.
\end{remark}

	\begin{proof}
		By hypercontractivity, it suffices to show \eqref{est:mXtau} for $q=2$ since each $X^\tau_{\ve}$ belongs to a finite Wiener chaos. 
		Let $\tau$ be a tree in $\tree_3$ and $\bar \tau$ be an identical copy of $\tau$. Let $\gamma$ be a way of pairing the leaves of $[\tau,\bar\tau]$ such that each leaf is paired with exactly one other leaf. The coloured graph $[\tau,\bar\tau]_\gamma$ is obtained by connecting the leaves of $[\tau,\bar\tau]$ with green edges according to the pairing rule $\gamma$ and connecting the two roots to a red node, so that
		\begin{equation*}
			\E |X^\tau_{\ve}(t,x)|^2=\sum_{\gamma}\
[\tau,\bar\tau]_\gamma(z_{\<red>}) \le \sum_{\gamma}\
|[\tau,\bar\tau]_\gamma|(t_{\<red>})\,.
		\end{equation*}
 As usual, the value associated to the single red node is fixed to $(t,x)$ and since
no confusion can occur we write $t$ instead of $t_{\<red>}$.
		Hence, the estimate \eqref{est:mXtau} for $q=2$ is obtained once we show that for any pairing rule $\gamma$, 
		\begin{equation}\label{est:ttg}
			|[\tau,\bar\tau]_\gamma| (t)\le (t\vee
\varepsilon^2)^{-\frac d2}(ce^{t} \varepsilon^{\frac d2-
\alpha})^{2\ell(\tau)}\(\int_0^{t} (s\vee \varepsilon^2)^{-\frac d2}
\ud s \)^{\l(\tau)-1}\,,
		\end{equation}
where $ | \cdot | $ is the valuation introduced above. Let us write $ \mG = (
\mV, \mE) \in \mF $ for the compatible tree $ [ \tau, \overline{\tau}]_{\gamma} $.

To prove \eqref{est:ttg} we introduce an algorithm that allows us to iterate
the type of bounds we described previously. We start 
by constructing a succession
$ \{ p_{i} \}_{i=0}^{N-1} $ of self-avoiding paths $ p_{i} = (\mV_{p_{i}},
\mE_{p_{i}}) \subseteq (\mV, \mE)$  that cover the entire tree $
\mG $.

By self-avoiding path we mean a path with
endpoints $ p_{-}, p_{+} \in \mV $ (with possibly $ p_{-} = p_{+} $) such that all points,
apart from possibly $ p_{-},p_{+} $ appear exactly once in an edge (in
particular, all edges are distinct). In addition we ask
that the endpoints are minimal points with respect to the ordering inherited by
the forest structure of $ \mG $. 

The paths $ \{ p_{i} \}_{i = 0}^{N-1} $ will cover the entire tree $ \mG $ in
the sense that we will iteratively define compatible graphs $ \mG^{(i)} \in
\mF_{3} $ for
$ i \in \{ 0, \dots, N \} $ by onto glueings, satisfying
\[ \mG^{(N)} = \emptyset\;, \quad \mG^{(i-1)} = p_{i-1} \glue \mG^{(i)}
\;, \quad \mG^{(0)} = \mG\;.  \]
In particular it is useful to note that we obtain
the following chain of embeddings
\begin{equs}
\begin{tikzcd}[cells={nodes={minimum height=2em}}]
 \mG^{(N)} \arrow[r, "\mf{e}_{N-1}"] & \mG^{(N-1)}\arrow[r,"\mf{e}_{N-2}"]  &
\cdots \cdots  \arrow[r,"\mf{e}_{1}"]  &
\mG^{(1)} \arrow[r,"\mf{e}_{0}"] & \mG^{(0)} \\
 & p_{N-1} \arrow[u,"\mf{e}_{N-1}"]  & p_{i} \arrow[u,"\mf{e}_{i}"]& p_{1}\arrow[u,"\mf{e}_{1}"] &  p_{0}\arrow[u,"\mf{e}_{0}"] 
\end{tikzcd} \;,
\end{equs}
where each $ \mf{e}_{i} $ is the map of
Definition~\ref{def:glueging-def}. Thus we obtain an overall map
\begin{equ}[eqn:def-e]
\mf{e} \colon p_{0} \sqcup \cdots \sqcup p_{N-1} \to \mG \;,
\end{equ}
defined by composing $ \mf{e} = \mf{e}_{0} \circ \cdots \circ \mf{e}_{i} $ on $
p_{i} $. The way the paths $ p_{i} $ are chosen is described by the following procedure, starting from the graph $
\mG^{(0)}= \mG $ introduced above.

\begin{enumerate}


\item \textbf{Construction of the path.} Assume that for some $ i \in \NN \cup
\{ 0 \} $ we are given a compatible graph $
\mG^{(i)} = (\mV^{(i)}, \mE^{(i)}) \in \mF_{3}$, with $ c(v) \in \{ \<black>,
\<red> \} $ for all $ v \in \mV^{(i)} $ (note that this is the case for $
\mG^{(0)} $, since we have one red root and no node is coloured yellow). We define $ p_{i} $ by choosing as a starting
point any $ v \in \mV^{(i)} $ such that $ c(v) =\<red> $, namely any
root of $ \mG^{(i)}$. 

By assumption, since $ \mG^{(i)} $ is an element of $ \mF_{3} $, we can
follow a path up through any one of the two maximal trees rooted at $ v
$ and let us call the sub-tree we choose $ \tau(v) $. Then, because of oddity, we can choose to arrive at a leaf such that the
pairing $ \gamma $ leads to another leaf, belonging to some $
\overline{\tau}(v) \neq \tau(v) $. We then
continue the path by descending the maximal subtree this leaf belongs to, until
we reach its red root
(this may be again $ v $, since $ v $ can be the root of up to two trees). We
colour all the nodes of $ p_{i} $ yellow, apart from its endpoints and leaves,
which are left red and black respectively.


\item \textbf{Removal.} We then define the compatible graph $ \mG^{(i+1)} = (\mV^{(i+1)},
\mE^{(i)} \setminus \mE_{p_{i}}) \in \mF$ with the colourings inherited by $
\mG^{(i)} $. Here $ \mV^{(i+1)} $ is obtained
from $ \mV^{(i)} $ by removing all singletons (meaning the two leafs connected by
the only green edge $ p_{i} $ has crossed and possibly the endpoints
of $ p_{i} $, if any of them was the root of just one tree, or $ p_{i} $ was a
cycle). We finally colour $ c(v) = \<red> $ for $
v \in \mV^{(i+1)} \cap \mV_{p_{i}} $. In particular \eqref{eqn:def-f3} still
holds true for $ \mG^{(i+1)} $, so that $ \mG^{(i+1)} $ lies in $ \mF_{3} $. Moreover,
we obtain that $ \mG^{(i)} $ is the \emph{onto} glueing of $ \mG^{(i+1)} $ onto $ p_{i} $
\begin{equs}
\mG^{(i)} = p_{i} \glue \mG^{(i+1)} \;,
\end{equs}
with the map $ \carrow $ defined by inverting the removal we just described.

\item \textbf{Conclusion.} We can now repeat steps $ 1 $ and $ 2 $ for $ N
$ times until $ \mG^{(N)} = \emptyset $ (if not, there must still be a red
root available and we can still follow the previous algorithm). Since at every step
we are removing exactly one green edge, we see that this algorithm
terminates in $ N = \l(\tau) $ steps.
\end{enumerate}
We are left with a collection of $ N $ paths $ \{ p_{i} \}_{i =
0}^{\l(\tau) -1} $ such
that
\begin{enumerate}
\item $ \mE $ is the disjoint union $ \mE = \bigsqcup_{i =1}^{\l(\tau)-1}
\mE_{p_{i}} $.
\item Every $ v \in \mV $ which is not a leaf appears in exactly thee paths,
twice as a root (coloured red) and once as an inner node (coloured yellow).
Instead, every leaf appears in exactly one path, coloured black.
\end{enumerate}
We observe that by the semigroup property, every path $ p_{i} $ yields a contribution
\begin{equ}[eqn:bd-path]
 | p_{i} | (t_{\<red>}) \leqslant \begin{cases} \sqrt{\mf{s}_{\ve}
(t_{p_{i,+}})\mf{s}_{\ve}
(t_{p_{i,-}})} & \text{ if } p_{i,+} \neq p_{i,-}, \\
 \mf{s}_{\ve} (t_{p_{i}}) & \text{ if } p_{i,+} = p_{i,-} = p_i,
\end{cases}
\end{equ}
where $ \mf{s}_{\ve} $ is defined in \eqref{eqn:def-s}.
To conclude, we may now use Lemma~\ref{lem:product} to obtain
\begin{equs}
 \mG (t, x) &  \leqslant  \mf{s}_{\ve}(t)\int_{[0, t]^{\mf{e} \mV_{
p_{0}, \<yellow>}} } |
\mG^{(1)} | (t_{\mf{e}\mV_{\<red>}^{(1)}}) \ud t_{\mf{e} \mV_{p_{0},
\<yellow>}}\;,
\end{equs}
where the map $ \mf{e} $ is defined in \eqref{eqn:def-e}. And similarly,
for $ i \in \{ 1, \dots, N-1 \} $, using the definition of the map $
\mf{e} $, we find
\begin{equs}
| \mG^{(i)} |(t_{\mf{e} \mV^{(i)}_{\<red>}}) \leqslant
| p_{i} |(t_{\mf{e} \mV_{p_{i}, \<red>}}) \int_{[0, t]^{\mf{e} \mV_{
p_{i}, \<yellow>}} } |
\mG^{(i+1)} | (t_{\mf{e}\mV_{\<red>}^{(i+1)}}) \ud t_{\mf{e} \mV_{p_{i}, \<yellow>}}\;,
\end{equs}
so that overall
\begin{equs}
 \mG (t, x) &  \leqslant \mf{s}_{\ve}(t)\int_{[0, t]^{ \overline{\mI}}}
\prod_{i =1}^{\l(\tau) -1} | p_{i} |(t_{\mf{e}\mV_{p_{i} , \<red>}})
\ud t_{\overline{\mI}}  \;,
\end{equs}
where $ \overline{\mI} = \sqcup_{i = 0}^{N-1} \mf{e} \mV_{p_{i}, \<yellow>} \subseteq \mV $ is the set of inner nodes that are not
leaves, in view of the considerations above. Since in addition by our observations every node in $ \overline{\mI} $ appears
exactly twice as a root of some path (either a cycle or two different paths),
by \eqref{eqn:bd-path} we finally obtain
\begin{equs}
\mG (t,x) &  \leqslant \mf{s}_{\ve}(t)\int_{[0, t]^{ \overline{\mI}}} \mf{s}_{\ve}(t_{ \overline{\mI}}) \ud t_{
\overline{\mI}} \leqslant \mf{s}_{\ve}(t) \bigg(
\int_{0}^{t} \mf{s}_{\ve} (s) \ud s \bigg)^{| \overline{\mI} |}\\
& \leqslant (t \vee \ve^{2})^{- \frac{d}{2}} (c e^{t} \ve^{\frac{d}{2} - \alpha})^{2 \l(\tau)} \bigg( \int_{0}^{t}
(s \vee \ve^{2})^{- \frac{d}{2}} \ud s \bigg)^{\l(\tau)-1}
 \;,
\end{equs}
which is \eqref{est:ttg}. Here we have used that $ | \overline{\mI} | = 2 i
(\tau) $ since we have two copies of the same tree, and $ 2i(\tau) =
\ell(\tau) -1 $.

As for \eqref{est:mDXtau}, we can follow the
same argument as above, to write:
\begin{equs}
 \EE | \nabla X^{\tau}_{\ve} (t, x) | = \sum_{\tau}\ [ \nabla \tau, \nabla
\overline{\tau}]_{\gamma} (t,x),
\end{equs}
where $ \nabla \tau $ indicates the same convolution integral associated to the tree
$\tau$ as above, only with the kernel $ - e^{t} \nabla p_{t}(x) $ used in the
edge connecting to the root. Graphically, if $ \tau $ is built starting from the
trees $ \tau_{1}, \tau_{2}, \tau_{3} $ we have
	\begin{equation*}
		\tau= 
		\begin{tikzpicture}[baseline=2]
			\node[dot] (0) at (0,0) {};
			\draw (0) to (0, 0.3);
			\draw (0,0.3) to (-.5,0.6) node at (-.5,.8) {$\tau_1$} ;
			\draw (0,0.3) to (0,0.6) node at (0,.8) {$\tau_2$} ;
			\draw (0,0.3) to (.5,0.6) node at (.5,.8) {$\tau_{3}$};
		\end{tikzpicture}\, , \qquad
		\nabla \tau= 
		\begin{tikzpicture}[baseline=2]
			\node[dot] (0) at (0,0) {};
			\draw[densely dotted] (0) to (0, 0.3);
			\draw (0,0.3) to (-.5,0.6) node at (-.5,.8) {$\tau_1$} ;
			\draw (0,0.3) to (0,0.6) node at (0,.8) {$\tau_2$} ;
			\draw (0,0.3) to (.5,0.6) node at (.5,.8) {$\tau_{3}$};
		\end{tikzpicture}\, ,
	\end{equation*}
where the dotted line indicates convolution against the kernel $  - e^{t} \nabla p_{t}(x)$.
In particular, we can decompose the graph $ [ \nabla \tau, \nabla
\overline{\tau}]_{\gamma} $ into the same set of paths $ \{ p_{i}
\}_{i =0}^{N-1} $ as for $
[\tau, \overline{\tau}]_{\gamma} $. The only difference is that now the path $
p_{0} $ contains two dotted lines coming into the root: all other paths
remain unchanged. By the semigroup property the contribution of the path $
p_{0} $ is the same as that of
\begin{equ}
	\begin{tikzpicture}[baseline=5]
			\node[root,label=below:{\tiny$(t,x)$}](root) at (0,0) {};
			\node[dot](left) at (-0.5,.6) {};
			\node[dot](right) at (.5,.6) {};
			\draw[densely dotted] (root) to (left) {};
			\draw[densely dotted] (root) to (right) {};
			\draw[contract] (right) [bend right=30] to (left) {};
		\end{tikzpicture} \, .
\end{equ}
Hence, using the bound (for some $ c > 0 $)
\begin{equs}
\| \nabla p^{\ve}_{t} \|_{L^{2}(\RR^{d})} & \leqslant (\| \nabla p_{t}
\|_{L^{2}(\RR^{d})} \| \varphi^{\ve} \|_{L^{1}(\RR^{d})} )
\wedge ( \| p_{t} \|_{L^{1}(\RR^{d})} \| \nabla \varphi^{\ve}
\|_{L^{2}(\RR^{d})} ) \\
& \leqslant c ( t \vee \ve^{2} )^{-\frac{d}{4} -
\frac{1}{2}}\;,
\end{equs}
we find that the contribution of $ p_{0} $ is bounded from above by
\begin{equs}
 (c \varepsilon^{\frac d2- \alpha})^2 e^{ 2t} (t \vee
\varepsilon^2)^{-\frac d2- 1} \;,
\end{equs}
which together with the previous calculations yields \eqref{est:mDXtau}.
\end{proof}

We can extend the previous result to longer timescales as follows.
	\begin{proposition}\label{prop:mPXtau}
		For every $ q \in [2, \infty ) $, $\tau$ in $\tree_3$ and every
$t>t_1(\varepsilon)$, the following estimates hold true
	\begin{equs}
	\|P^1_{t-t_1}X^\tau_{\ve}(t_1,x)\|_{L^q(\Omega)} &\lesssim (t\vee \varepsilon^2)^{-\frac d4}e^t \varepsilon^{\frac d2- \alpha} \(e^{t_1}
\varepsilon^{1- \alpha}\Gamma_{d/2}^{1/2}(t_1\varepsilon^{-2})\)^{\ell(\tau)-1},\label{est:mPXtau} \\		
\|\nabla P^1_{t-t_1}X^\tau_{\ve} (t_1,x)\|_{L^{q}(\Omega)} & \lesssim (t \vee
\ve^{2})^{-\frac d4-\frac12}e^t \varepsilon^{\frac d2-
\alpha}\( e^{t_1}\varepsilon^{1- \alpha}\Gamma_{d/2}^{1/2}(t_{1} \ve^{-2})
\)^{\ell(\tau)-1}. \qquad \quad  \label{est:mPDXtau}
		\end{equs}
	\end{proposition}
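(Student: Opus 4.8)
The plan is to follow the proof of Proposition~\ref{prop:momentX} almost verbatim, the only change being the position of the root. Since each $X^\tau_\ve$ belongs to a fixed inhomogeneous Wiener chaos, hypercontractivity reduces both \eqref{est:mPXtau} and \eqref{est:mPDXtau} to the case $q=2$.

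The key point is that $P^1_{t-t_1}X^\tau_\ve(t_1,\cdot)$ has a graphical expansion identical to that of $X^\tau_\ve(t_1,\cdot)$ except for its root edge. Indeed, $X^\tau_\ve(t_1,x)$ is given by the Duhamel formula with all time integrations running over $[0,t_1]$ and root kernel $-P^1(t_1-s,x-y)$; applying $P^1_{t-t_1}$ and using the semigroup identity $\int_{\R^d}P^1(t-t_1,x-z)P^1(t_1-s,z-y)\,\ud z=P^1(t-s,x-y)$ shows that the sole effect is to replace the root kernel by $-P^1(t-s,x-y)\mathbf{1}_{\{s<t_1\}}$, the remaining time integrations still running over $[0,t_1]$. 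Hence, exactly as in the proof of Proposition~\ref{prop:momentX},
\[
\E\,\big|P^1_{t-t_1}X^\tau_\ve(t_1,x)\big|^2=\sum_{\gamma}[\tau,\bar\tau]^{t}_\gamma(z_{\<red>})\le\sum_{\gamma}\big|[\tau,\bar\tau]^{t}_\gamma\big|(t_{\<red>})\,,
\]
where $[\tau,\bar\tau]^{t}_\gamma$ denotes the graph $[\tau,\bar\tau]_\gamma$ with its red root placed at temporal coordinate $t$ and with the two edges incident to that root carrying the kernel $P^1(t-\cdot,\cdot)\,\mathbf{1}_{\{\cdot<t_1\}}$; in particular every non-root vertex of $[\tau,\bar\tau]^{t}_\gamma$ has temporal coordinate in $[0,t_1]$.

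Next I would run the path-decomposition algorithm from the proof of Proposition~\ref{prop:momentX} on $[\tau,\bar\tau]^{t}_\gamma$. The root path $p_0$ is again a loop based at the unique red root; collapsing it by the semigroup property as in \eqref{eqn:def-s} and \eqref{eqn:bd-path} yields $|p_0|(t)\le\mf{s}_\ve(t)$, the additional indicators only shrinking the value and the collapse being insensitive to the intermediate times. Each of the remaining $\ell(\tau)-1$ paths avoids the root, meets only ordinary $K_{\myarcb}$ edges, and --- this being the one genuine difference with Proposition~\ref{prop:momentX} --- sees only vertices with temporal coordinate in $[0,t_1]$ (forced by the kernels, the indicator on the two root edges serving as anchor). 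Iterating Lemma~\ref{lem:product}, but carrying out the yellow temporal integrations over $[0,t_1]$ rather than over $[0,\mf{t}]$ (legitimate precisely because the relevant kernels vanish outside $[0,t_1]$), exactly as in the running example leading to \eqref{est:EIY3} gives
\[
\big|[\tau,\bar\tau]^{t}_\gamma\big|(t)\le\mf{s}_\ve(t)\,\Big(\int_0^{t_1}\mf{s}_\ve(s)\,\ud s\Big)^{\ell(\tau)-1}\,.
\]
The change of variables $s\mapsto\ve^{-2}s$ employed below \eqref{est:EIY3} identifies $\int_0^{t_1}\mf{s}_\ve(s)\,\ud s$ with a constant multiple of $\big(e^{t_1}\ve^{1-\alpha}\Gamma_{d/2}^{1/2}(t_1\ve^{-2})\big)^2$, while $\mf{s}_\ve(t)=c^2\big((t\vee\ve^2)^{-d/4}e^t\ve^{d/2-\alpha}\big)^2$ and $\ell(\tau)-1=2i(\tau)$; summing over the finitely many pairings $\gamma$ then gives \eqref{est:mPXtau} for $q=2$. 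For \eqref{est:mPDXtau} the argument is verbatim, except that the two edges of the loop $p_0$ incident to the root now carry the gradient kernel $\nabla P^1$; bounding $p_0$ through $\|\nabla p^\ve_t\|_{L^2(\R^d)}\le c\,(t\vee\ve^2)^{-d/4-1/2}$ as in the derivation of \eqref{est:mDXtau} replaces one factor $(t\vee\ve^2)^{-d/4}$ by $(t\vee\ve^2)^{-d/4-1/2}$ --- evaluated at $t$, consistently with the stated prefactor --- and leaves all inner factors untouched.

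The only delicate point, and the sole place where the statement says anything beyond Proposition~\ref{prop:momentX}, is the bookkeeping that the $\ell(\tau)-1$ inner temporal integrations genuinely run over $[0,t_1]$ and not over $[0,t]$: this is what improves the trivial bound $\int_{\R^d} P^1(t-t_1,x-y)\,\|X^\tau_\ve(t_1,y)\|_{L^q(\Omega)}\,\ud y\lesssim(t_1\vee\ve^2)^{-d/4}e^t\ve^{d/2-\alpha}\,\big(e^{t_1}\ve^{1-\alpha}\Gamma_{d/2}^{1/2}(t_1\ve^{-2})\big)^{\ell(\tau)-1}$ (obtained directly from Proposition~\ref{prop:momentX}) to the sharper prefactor $(t\vee\ve^2)^{-d/4}$, and it rests on correctly propagating the indicator $\mathbf{1}_{\{\cdot<t_1\}}$ on the two root edges through the glueing and valuation of Lemma~\ref{lem:product}.
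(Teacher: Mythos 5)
Your proposal is correct and follows essentially the same route as the paper's proof: hypercontractivity reduces to $q=2$, the semigroup identity shows that applying $P^1_{t-t_1}$ only modifies the root edge (the paper encodes this with an extra ``blue'' node at time $t_1$, you encode it with an indicator on the root kernel, which is equivalent), and then the path decomposition of Proposition~\ref{prop:momentX} is rerun with the root path collapsed at time $t$ while the remaining $\ell(\tau)-1$ inner temporal integrations stay constrained to $[0,t_1]$, with the gradient case handled exactly as in \eqref{est:mDXtau}. This matches the paper's argument, including the key bookkeeping point you single out at the end.
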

	\begin{proof}
By hypercontractivity it suffices to prove the result for $ q =2 $.
We can change the graphical notation we introduced previously by adding blue inner
nodes \tikz[baseline=-2]\node[bluedot] {}; associated to a time $ t_{1}>0 $ to
indicate integration against the measure $ - \delta_{t_{1}}(s) \ud s \ud y $:
once more, the minus sign is a matter of convention. It is natural though since our lines
represent integration against $ - P^{1} $ as a consequence of the
minus sign appearing in \eqref{eqn:Xtau}.
With this notation we have
\begin{equs}
P_{t - t_{1}}^{1} X^{\bullet}_{\ve} (t_{1}, x) = \begin{tikzpicture}[baseline=5]
		\node[root,label=below:{\tiny$(t,x)$}](l) at (0,0) {};
		\node[bluedot,label=right:{\tiny$t_{1}$}] (m) at (0, 0.3) {};
		\node[noise](h) at (0,0.6){};
		\draw (h) to (m) to (l);
		\end{tikzpicture}, \quad P_{t - t_{1}}^{1}
X^{\tau}_{\ve}(t_{1}, x) = \begin{tikzpicture}[baseline=2]
			\node[bluedot,label=right:{\tiny$t_{1}$}] (0) at (0,0) {};
			\node[root,label=below:{\tiny$(t,x)$}] (l) at (0, -.4) {};
			\draw (l) to (0);
			\draw (0) to (0, 0.3);
			\draw (0,0.3) to (-.5,0.6) node at (-.5,.8) {$\tau_1$} ;
			\draw (0,0.3) to (0,0.6) node at (0,.8) {$\tau_2$} ;
			\draw (0,0.3) to (.5,0.6) node at (.5,.8) {$\tau_{3}$};
		\end{tikzpicture}\, ,
\end{equs} 
for $ \tau = [\tau_{1}, \tau_{2}, \tau_{3}]. $ For convenience we denote by
$ P_{t - t_{1}}^1 \tau_{t_{1}} $ the trees obtained in this manner.
Now we can follow the proof of Proposition \ref{prop:momentX} to write
\begin{equ}
\EE | P_{t - t_{1}}^{1} X^{\tau}_{\ve} (t_{1}, x)|^{2} = \sum_{\gamma} \ [P_{t -
t_{1}}^{1} \tau_{t_{1}}, P_{t - t_{1}}^{1} \overline{\tau}_{t_{1}}]_{\gamma}
(x)\;,
\end{equ}
where $ \overline{\tau} $ is an identical copy of $ \tau $.
The possible paring rules $ \gamma $ for the tree $ P_{t - t_{1}}^{1} \tau_{t_{1}} $ (with an identical copy of itself) are the same as for the tree $ \tau $ (since we only added a
node to the edge entering the root), and also the set of paths $ \{
p_{i} \}_{i=0}^{N-1} $ in which
the tree is decomposed remains unchanged, up to adding the two nodes
corresponding to $ t_{1} $ to the path $ p_{0} $. By the semigroup
property, the contribution of the path $ p_{0} $ is then given by
\begin{equs}
(c e^{t} \varepsilon^{\frac{d}{2} - \alpha})^2(t \vee \varepsilon^2)^{-
\frac{d}{ 2} } \;,
\end{equs}
while the contribution of all other paths remains unchanged, with the
additional constraint that all temporal variables (apart from the red root) are
constrained to the interval $ [0, t_{1}] $.

Hence, following the calculations of the proof of
Proposition~\ref{prop:momentX} we can bound
\begin{equs}
\ [P^{1}_{t - t_{1}}\tau_{t_{1}} , P_{t - t_{1}}^{1} \overline{\tau} ]_{\gamma}
& \leqslant (c e^{t} \varepsilon^{\frac{d}{2} - \alpha})^2(t \vee
\varepsilon^2)^{- \frac{d}{ 2} }\bigg(
\int_{0}^{t_{1}} \mf{s}_{\ve} (s) \ud s \bigg)^{2i(\tau)}\\
& \lesssim (e^t \varepsilon^{\frac d2- \alpha})^{2}(t\vee
\varepsilon^2)^{-\frac d2} \Big( e^{t_{1}} \varepsilon^{1-
\alpha}\Gamma_{d/2}^{1/2}(t_1\varepsilon^{-2})
\Big)^{\l(\tau) -1} \;.
\end{equs} 
This completes the proof of \eqref{est:mPXtau}. For \eqref{est:mPDXtau} one can
follow the same arguments in combination with the estimates in the proof of
Proposition \ref{prop:momentX}.
\end{proof}
Next we can compare the Wild expansion and the original solution via a
comparison principle.
	\begin{lemma}\label{lem:remainder} For every $\ve \in
(0, 1), N\ge1$ and
$(t,x)\in\R_+\times\R^d$, let $ u^{N}_{\ve} $ and $ R^{N} $ be as in
\eqref{eqn:uN}, \eqref{eqn:RN}. Then
		\begin{equation}\label{est:v}
			|u_\varepsilon-u^N_\varepsilon|(t,x)\le P^1*|R^N_\varepsilon|(t,x)\,.
		\end{equation}
	\end{lemma}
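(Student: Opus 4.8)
The plan is to deduce the bound from the comparison principle of Lemma~\ref{lem:abs.v}. Set $v \eqdef u_\varepsilon - u^N_\varepsilon$. Since $u^N_\varepsilon(0,\cdot) = u_\varepsilon(0,\cdot) = \eta_\varepsilon$ by \eqref{eqn:uN}, we have $v(0,\cdot) = 0$, and subtracting the equation \eqref{eqn:uep} for $u_\varepsilon$ (with the Laplacian-plus-one written on the left) from \eqref{eqn:uN} gives
\[
(\partial_t - \Delta - 1)v = -\bigl(u_\varepsilon^3 - (u^N_\varepsilon)^3\bigr) - R^N_\varepsilon \qquad \text{on } (0,\infty)\times\R^d\;.
\]

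Next I would factor the cubic difference pointwise. Writing $a^3 - b^3 = (a-b)(a^2+ab+b^2)$ with $a = u_\varepsilon$, $b = u^N_\varepsilon$, one gets $u_\varepsilon^3 - (u^N_\varepsilon)^3 = G\,v$ with
\[
G \eqdef u_\varepsilon^2 + u_\varepsilon u^N_\varepsilon + (u^N_\varepsilon)^2 = \Bigl(u_\varepsilon + \tfrac12 u^N_\varepsilon\Bigr)^2 + \tfrac34 (u^N_\varepsilon)^2 \ge 0\;.
\]
Hence, putting $R \eqdef -R^N_\varepsilon$, the function $v$ solves $(\partial_t - \Delta - 1)v = -Gv + R$ with $G$ nonnegative and $v(0,\cdot)\equiv 0$. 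The regularity hypothesis of Lemma~\ref{lem:abs.v} is met: the initial datum $\eta_\varepsilon$ is a smooth (Schwartz-class) function, so by standard parabolic regularity $u_\varepsilon$ and each $X^\tau_\varepsilon$ — hence the finite sum $u^N_\varepsilon$ — are $C^2$ (in fact smooth) on $[0,T]\times\R^d$ for every finite $T$, and in particular on $[0,t]$ for the fixed $t$ under consideration.

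Applying Lemma~\ref{lem:abs.v} on the interval $[0,t]$ (i.e.\ with $a=0$) then yields
\[
|v(t,x)| \le P^1_t|v|(0,\cdot)(x) + \int_0^t P^1_{t-s}|R^N_\varepsilon|(s,\cdot)(x)\,\ud s = \int_0^t P^1_{t-s}|R^N_\varepsilon|(s,\cdot)(x)\,\ud s = P^1 * |R^N_\varepsilon|(t,x)\;,
\]
where the first boundary term vanishes because $v(0,\cdot)\equiv 0$. This is precisely \eqref{est:v}. There is no real obstacle here beyond the two elementary observations above, namely the sign of $G$ (immediate from the completion of the square) and the $C^2$-regularity needed to legitimately invoke Lemma~\ref{lem:abs.v}.
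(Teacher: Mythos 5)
Your proof is correct and follows essentially the same route as the paper: subtract \eqref{eqn:uN} from \eqref{eqn:uep}, factor the cubic difference as a nonnegative multiplier times $v$ (your $u_\varepsilon^2+u_\varepsilon u^N_\varepsilon+(u^N_\varepsilon)^2$ is algebraically identical to the paper's $v^2+3u^N_\varepsilon v+3(u^N_\varepsilon)^2$), and apply Lemma~\ref{lem:abs.v} with vanishing initial datum. Nothing further is needed.
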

	\begin{proof}
		From \eqref{eqn:uep} and \eqref{eqn:uN}, it follows that the remainder $v_\varepsilon=u_\varepsilon-u^N_\varepsilon$ satisfies the following equation
		\begin{equation}\label{eqn:v}
			(\partial_t- \Delta-1)v_\varepsilon=-\(v_\varepsilon^2+3u_\varepsilon^N v_\epsilon+3(u_\varepsilon^N)^2\)v_\varepsilon -R^N_\varepsilon\,,\quad v_\varepsilon(0,\cdot)=0\,.
		\end{equation}
		The estimate \eqref{est:v} is a direct consequence of Lemma \ref{lem:abs.v} and the fact that $v_\varepsilon^2+3u^N_\varepsilon v_\varepsilon+3(u^N_\varepsilon)^2 $ is non-negative.
	\end{proof}
Eventually, if we combine all the results we have obtained so far, we obtain an
estimate on the distance of the Wild expansion from the original solution until
times of order $ (1 - \alpha) \log{\ve^{-1}}.$
	\begin{proposition}\label{prop:uuN} For every $t\le (1- \alpha)\log
\varepsilon^{-1}$, $ q \in [2, \infty) $ and $ N \in \NN $
		\begin{equation}\label{est:uuN}
			\|u_\varepsilon(t,x)-u_\varepsilon^N(t,x)\|_{L^q(\Omega)}\lesssim \varepsilon^{2- 3	\alpha}e^{3t}\(e^t \varepsilon^{1- \alpha}\Gamma_{d/2}^{3/2}(t \varepsilon^{-2})\)^{N-1}
		\end{equation}
		and
		\begin{multline}\label{est:mu}
			\|u_\varepsilon(t,x)\|_{L^q(\Omega)}\lesssim  (t\vee
		\varepsilon^2)^{-\frac d4}e^t \varepsilon^{\frac d2-
		\alpha}\Gamma_{d/2}^{N/2}(t \varepsilon^{-2}) \\
		 + \varepsilon^{2- 3	\alpha}e^{3t}\(e^t \varepsilon^{1- \alpha}\Gamma_{d/2}^{3/2}(t \varepsilon^{-2})\)^{N-1}\,.
		\end{multline}
		In addition, for every $c>1$, uniformly over $t_1(\ve)<t< c\log \varepsilon^{-1}$
		\begin{equation}\label{est:mPu}
			\|P^1_{t-t_1(\ve)}u_\varepsilon(t_1(\ve),y)\|_{L^{q}(\Omega)} \lesssim t^{-\frac d4}e^t \varepsilon^{\frac d2- \alpha}\,.
		\end{equation}
	\end{proposition}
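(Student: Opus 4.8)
The plan is to read off all three estimates from Lemma~\ref{lem:remainder}, the moment bounds of Propositions~\ref{prop:momentX} and~\ref{prop:mPXtau}, the elementary inequality $e^{t}\varepsilon^{1-\alpha}\le 1$ (valid precisely for $t\le(1-\alpha)\log\varepsilon^{-1}$), and the monotonicity together with the at most logarithmic growth of $r\mapsto\Gamma_{d/2}(r\varepsilon^{-2})$. For \eqref{est:uuN}, apply Lemma~\ref{lem:remainder} to get $|u_\varepsilon-u^N_\varepsilon|(t,x)\le P^1*|R^N_\varepsilon|(t,x)$; taking $L^q(\Omega)$-norms, moving the norm inside the space--time convolution by Minkowski's inequality and using spatial stationarity of the law of $\eta_\varepsilon$ (so that $P^1_{t-s}$ acts on the constant-in-$x$ function $\|R^N_\varepsilon(s,\cdot)\|_{L^q(\Omega)}$ as multiplication by $e^{t-s}$) reduces matters to
\begin{equation*}
\|u_\varepsilon(t,x)-u^N_\varepsilon(t,x)\|_{L^q(\Omega)}\le\int_0^t e^{t-s}\,\|R^N_\varepsilon(s,\cdot)\|_{L^q(\Omega)}\,\ud s\,.
\end{equation*}
The sum \eqref{eqn:RN} defining $R^N_\varepsilon$ is \emph{finite}: the constraints $\tau_i\in\tree_3^N$ and $[\tau_1,\tau_2,\tau_3]\notin\tree_3^N$ force $N\le m\le 3N$ with $m:=i(\tau_1)+i(\tau_2)+i(\tau_3)$, so only finitely many triples occur, each with $m\ge N$, and $\|R^N_\varepsilon(s,\cdot)\|_{L^q(\Omega)}\le\sum\|X^{\tau_1}_\varepsilon X^{\tau_2}_\varepsilon X^{\tau_3}_\varepsilon(s,\cdot)\|_{L^q(\Omega)}$.

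The crucial point is the estimate of $\|X^{\tau_1}_\varepsilon X^{\tau_2}_\varepsilon X^{\tau_3}_\varepsilon(s,x)\|_{L^q(\Omega)}$. Rather than applying Hölder's inequality followed by Proposition~\ref{prop:momentX} (which triples the prefactor $e^{s}\varepsilon^{d/2-\alpha}(s\vee\varepsilon^2)^{-d/4}$ and thereby loses a power of $\log\varepsilon^{-1}$ when $d=2$), I would bound the second moment directly as in the proof of Proposition~\ref{prop:momentX}: $\E|X^{\tau_1}_\varepsilon X^{\tau_2}_\varepsilon X^{\tau_3}_\varepsilon(s,x)|^2$ is a sum over leaf-pairings of a paired forest whose $2m$ non-root inner nodes are the branching nodes of the two copies of $\tau_1,\tau_2,\tau_3$; this forest has six planted subtrees sharing the single red root, so it is not literally an element of $\mF_3$, but the path-decomposition algorithm and Lemma~\ref{lem:product} apply verbatim once $\mF_3$ is enlarged to allow more subtrees per root. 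Together with \eqref{eqn:def-s}, the change of variables producing $B_\varepsilon$, and hypercontractivity, this gives
\begin{equation*}
\|X^{\tau_1}_\varepsilon X^{\tau_2}_\varepsilon X^{\tau_3}_\varepsilon(s,x)\|_{L^q(\Omega)}\lesssim (s\vee\varepsilon^2)^{-d/4}\,e^{s}\,\varepsilon^{d/2-\alpha}\,\bigl(e^{s}\varepsilon^{1-\alpha}\Gamma_{d/2}^{1/2}(s\varepsilon^{-2})\bigr)^{2m}\,.
\end{equation*}
Inserting this, using $\Gamma_{d/2}(s\varepsilon^{-2})\le\Gamma_{d/2}(t\varepsilon^{-2})$, and performing the $s$-integral — dominated up to an $N$-dependent constant by its endpoint $s=t$, since the integrand grows exponentially in $s$, with the contribution of $s<\varepsilon^2$ handled separately — yields for each triple a bound $\lesssim_N\varepsilon^{2-3\alpha}e^{3t}\bigl(e^{t}\varepsilon^{1-\alpha}\Gamma_{d/2}^{1/2}(t\varepsilon^{-2})\bigr)^{2m}$. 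Since $m\ge N$, $e^{t}\varepsilon^{1-\alpha}\le1$ and $\varepsilon^{d/2}\le(t\vee\varepsilon^2)^{d/4}$, the surplus powers of $e^{t}\varepsilon^{1-\alpha}$ and of $\varepsilon$ beat the (at most polylogarithmic) discrepancy between $\Gamma_{d/2}^{m}$ and $\Gamma_{d/2}^{3(N-1)/2}$, so each term is $\lesssim_N$ the right-hand side of \eqref{est:uuN}; summing over the finitely many triples completes \eqref{est:uuN}.

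For \eqref{est:mu}, use $\|u_\varepsilon\|_{L^q(\Omega)}\le\|u^N_\varepsilon\|_{L^q(\Omega)}+\|u_\varepsilon-u^N_\varepsilon\|_{L^q(\Omega)}$: the second summand gives the second term of \eqref{est:mu}, and $\|u^N_\varepsilon\|_{L^q(\Omega)}\le\sum_{\tau\in\tree_3^N}\|X^\tau_\varepsilon\|_{L^q(\Omega)}$ is, by Proposition~\ref{prop:momentX} and $\ell(\tau)-1=2i(\tau)$, at most $(t\vee\varepsilon^2)^{-d/4}e^t\varepsilon^{d/2-\alpha}$ times $\max_{0\le k\le N}(e^t\varepsilon^{1-\alpha}\Gamma_{d/2}^{1/2}(t\varepsilon^{-2}))^{2k}$ up to an $N$-dependent constant; when $(e^t\varepsilon^{1-\alpha})^2\Gamma_{d/2}^{1/2}(t\varepsilon^{-2})\le1$ — which holds for $t\le t_2(\varepsilon)$, hence on the whole range if $d\ge3$ — this maximum is $\le\Gamma_{d/2}^{N/2}(t\varepsilon^{-2})$, producing the first term of \eqref{est:mu}; on the remaining narrow window $t_2(\varepsilon)<t\le(1-\alpha)\log\varepsilon^{-1}$ (which only arises for $d=2$) the second term of \eqref{est:mu}, of order $\varepsilon^{2-3\alpha}e^{3t}$, dominates the at-most-polylogarithmic contribution of $\|u^N_\varepsilon\|_{L^q(\Omega)}$. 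For \eqref{est:mPu}, fix $N$ large (depending on $d,\alpha,\bar\alpha$) and write $u_\varepsilon(t_1(\varepsilon),\cdot)=u^N_\varepsilon(t_1(\varepsilon),\cdot)+(u_\varepsilon-u^N_\varepsilon)(t_1(\varepsilon),\cdot)$ before applying $P^1_{t-t_1(\varepsilon)}$; for the main part, $\sum_{\tau\in\tree_3^N}\|P^1_{t-t_1}X^\tau_\varepsilon(t_1,\cdot)\|_{L^q(\Omega)}$ is bounded via \eqref{est:mPXtau}, and since $e^{t_1(\varepsilon)}\varepsilon^{1-\alpha}=\varepsilon^{1-\bar\alpha}$ with $\bar\alpha<1$ every correction factor is $\le1$ for $\varepsilon$ small, leaving $\lesssim_N t^{-d/4}e^t\varepsilon^{d/2-\alpha}$ (as $t>t_1(\varepsilon)\gg\varepsilon^2$); for the remainder, Lemma~\ref{lem:remainder}, Minkowski and stationarity give $\|P^1_{t-t_1}(u_\varepsilon-u^N_\varepsilon)(t_1,\cdot)\|_{L^q(\Omega)}\le e^{t-t_1}\|(u_\varepsilon-u^N_\varepsilon)(t_1,\cdot)\|_{L^q(\Omega)}$, which by \eqref{est:uuN} at time $t_1(\varepsilon)$ is $\lesssim e^t\varepsilon^{c_0}(\varepsilon^{1-\bar\alpha}\Gamma_{d/2}^{3/2}(t_1(\varepsilon)\varepsilon^{-2}))^{N-1}$ for a fixed exponent $c_0$, hence $\lesssim t^{-d/4}e^t\varepsilon^{d/2-\alpha}$ once $N$ is large enough (using $\bar\alpha<1$). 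Adding the two contributions gives \eqref{est:mPu}.

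The main obstacle is the product estimate above: extracting the sharp second-moment bound on $X^{\tau_1}_\varepsilon X^{\tau_2}_\varepsilon X^{\tau_3}_\varepsilon$ from the valuation machinery — a naive Hölder bound is genuinely lossy by a logarithm in dimension $d=2$ — and then checking that the finitely many triples with $m>N$ and the gap between the exponents $\tfrac12$ and $\tfrac32$ of $\Gamma_{d/2}$ are absorbed by the smallness of $e^{t}\varepsilon^{1-\alpha}$ on the stated time range. The remaining steps are essentially bookkeeping, the one subtlety being the dimension-dependent choice of $N$ in \eqref{est:mPu}.
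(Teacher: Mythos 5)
Your overall skeleton is the paper's (Lemma~\ref{lem:remainder} plus a moment bound on $R^N_\ve$ for \eqref{est:uuN}, the triangle inequality for \eqref{est:mu}, and the split $u_\ve=u^N_\ve+(u_\ve-u^N_\ve)$ with Proposition~\ref{prop:mPXtau} and a large choice of $N$ for \eqref{est:mPu}), but the step you flag as the crucial one contains a genuine error. The claimed bound
\begin{equation*}
\|X^{\tau_1}_\ve X^{\tau_2}_\ve X^{\tau_3}_\ve(s,x)\|_{L^q(\Omega)}\lesssim (s\vee\ve^2)^{-\frac d4}\,e^{s}\,\ve^{\frac d2-\alpha}\,\bigl(e^{s}\ve^{1-\alpha}\Gamma_{d/2}^{1/2}(s\ve^{-2})\bigr)^{2m}
\end{equation*}
with a \emph{single} prefactor is false at small times. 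Test it on $\tau_1=[\bullet,\bullet,\bullet]$, $\tau_2=\tau_3=\bullet$ at $s=\ve^2$: there $X^\bullet_\ve\sim\ve^{-\alpha}$ and $X^{[\bullet,\bullet,\bullet]}_\ve\approx-\int_0^{\ve^2}P^1_{\ve^2-r}(X^\bullet_\ve(r))^3\,\ud r\sim\ve^{2-3\alpha}$, so the product has size $\sim\ve^{2-5\alpha}$, while your bound gives $\ve^{2-3\alpha}$; the gap $\ve^{-2\alpha}$ is unbounded. Structurally, the pairing expansion of $\E|X^{\tau_1}_\ve X^{\tau_2}_\ve X^{\tau_3}_\ve|^2$ contains the disconnected pairings, which force the \emph{cubed} prefactor; equivalently, if you run the path-decomposition of Proposition~\ref{prop:momentX} on the graph with six planted subtrees at the single red root, the six root-incident edges are consumed by three paths that begin and end at that root, and each of these contributes $\mf{s}_\ve(t)$ by \eqref{eqn:bd-path}--\eqref{eqn:def-s}, not $\int_0^t\mf{s}_\ve$. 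So the correctly extended valuation machinery returns exactly the H\"older-type bound you set out to avoid. Your assertion that the $s$-integral is then dominated by its endpoint $s=t$ is an artifact of the false bound: with the correct one, the singularity $(s\vee\ve^2)^{-3d/4}$ makes the region $s\lesssim\ve^2$ dominant.

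The premise motivating the detour is also mistaken: nothing, and in particular no logarithm in $d=2$, is lost by H\"older. The paper's proof is precisely that route: Proposition~\ref{prop:momentX} (with exponent $3q$) gives $\|R^N_\ve(s,x)\|_{L^q(\Omega)}\lesssim(s\vee\ve^2)^{-\frac{3d}4}(e^s\ve^{\frac d2-\alpha})^3\bigl(e^s\ve^{1-\alpha}\Gamma_{d/2}^{1/2}(s\ve^{-2})\bigr)^{\ell(\tau_1)+\ell(\tau_2)+\ell(\tau_3)-3}$, the constraint $e^t\ve^{1-\alpha}\le1$ reduces the last factor to $(e^t\ve^{1-\alpha}\Gamma_{d/2}^{3/2}(t\ve^{-2}))^{N-1}$, and then Lemma~\ref{lem:remainder} together with $e^{t-s}e^{3s}\le e^{3t}$ and $\int_0^t(s\vee\ve^2)^{-\frac{3d}4}\ud s\lesssim\ve^{2-\frac{3d}2}$ (no logarithm, since $\tfrac{3d}4>1$ for $d\ge2$) yields $(\ve^{\frac d2-\alpha})^3\ve^{2-\frac{3d}2}e^{3t}=\ve^{2-3\alpha}e^{3t}$, i.e.\ exactly \eqref{est:uuN}; the factor $\ve^{2-3\alpha}$ comes from the small-$s$ region, which is why the tripled singular prefactor is not a loss but the mechanism producing the stated bound. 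Your treatment of \eqref{est:mu} and \eqref{est:mPu} otherwise coincides with the paper's (triangle inequality plus Proposition~\ref{prop:momentX}; and the decomposition of $u_\ve$, Proposition~\ref{prop:mPXtau}, the identity $e^{t_1}\ve^{1-\alpha}=\ve^{1-\bar\alpha}$, and a sufficiently large $N$), so replacing your product estimate by the straightforward H\"older/moment bound turns your argument into the paper's proof.
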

	\begin{proof}
		From the definition of $R^N_\varepsilon$ and the moment estimate in Proposition \ref{prop:momentX}, we see that for every $q\ge2$,
		\begin{align*}
			\|R^N_\varepsilon(t,x)\|_{L^q(\Omega)}
			\lesssim(t\vee \varepsilon^2)^{-3\frac d4}(e^t \varepsilon^{\frac d2- \alpha})^3 
			\sum \(e^t \varepsilon^{1- \alpha}\Gamma_{d/2}^{1/2}(t\varepsilon^{-2}) \)^{\ell(\tau_1)+\ell(\tau_2)+\ell(\tau_3)-3},
		\end{align*}
		where the sum is taken over all trees $\tau_1,\tau_2,\tau_3$ in
$\tree_3^N$ such that $\tau=[\tau_1,\tau_2,\tau_3]\notin\tree_3^N$. Since
$\ell(\tau)=\ell(\tau_1)+\ell(\tau_2)+\ell(\tau_3)\in[N+2,3N]$ and $t$
satisfies $e^t \varepsilon^{1- \alpha}\le 1$, each term in the above sum is at
most $(e^t \varepsilon^{1- \alpha})^{N-1}\Gamma_{d/2}(t
\varepsilon^{-2})^{\frac32(N-1)}$. This yields 
		\begin{equation*}
			\|R^N_\varepsilon(t,x)\|_{L^{q}(\Omega)}\lesssim(t\vee
\varepsilon^2)^{-3\frac d4}(e^t \varepsilon^{\frac d2- \alpha})^3(e^t
\varepsilon^{1- \alpha})^{N-1}\Gamma_{d/2}(t \varepsilon^{-2})^{\frac32(N-1)}\, ,
		\end{equation*}
		which, when combined with Lemma \ref{lem:remainder} gives
		\begin{align*}
			\|u_\varepsilon(t,x)-u^N_\varepsilon(t,x)\|_{L^q(\Omega)}\lesssim(e^t
\varepsilon^{\frac d2- \alpha})^3(e^t \varepsilon^{1-
\alpha}\Gamma_{d/2}^{3/2})^{N-1}\int_0^t(s\vee \varepsilon^2)^{-\frac {3d}4}
\ud s\,.
		\end{align*}
		To derive \eqref{est:uuN}, we observe that $\int_0^t(s\vee
\varepsilon^2)^{-\frac{3d}4} \ud s\lesssim \varepsilon^{2-\frac{3d}2}$ for $d\ge2$. The estimate \eqref{est:mu} is a direct consequence of \eqref{est:uuN} and Proposition \ref{prop:momentX}.

		Let us now show \eqref{est:mPu}. By the triangle inequality, we have
		\begin{equation*}
			\|P^1_{t-t_1}u_\varepsilon(t_1)\|_{L^q(\Omega)}\le
P^1_{t-t_1}\|(u_\varepsilon-u^N_\varepsilon)(t_1)\|_{L^q(\Omega)}+\|P^1_{t-t_1}u^N_\varepsilon(t_1)\|_{L^q(\Omega)}\,.
		\end{equation*}
		The first term is estimated using \eqref{est:uuN}, 
		\begin{align*}
			P^1_{t-t_1}\|( u_\varepsilon-u^N_\varepsilon)
(t_{1})\|_{L^q(\Omega)}(x)
			&\lesssim e^{t-t_1}\varepsilon^{2-3 \alpha}e^{3t_1}
			(e^{t_1}\varepsilon^{1- \alpha}\Gamma^{3/2}_{d/2}(t_1 \varepsilon^{-2}))^{N-1}\,.
		\end{align*}
		Writing the second term as $\sum_{\tau\in\tree^N_3}
P^1_{t-t_1}X^\tau_{\ve}(t_1,x)$ and applying Proposition \ref{prop:mPXtau}, we obtain 
		\begin{equation*}
			\|P^1_{t-t_1}u^N_\varepsilon(t_1,x)\|_{L^q} \lesssim (t\vee \varepsilon^2)^{-\frac d4}e^t \varepsilon^{\frac d2- \alpha}  \,.
		\end{equation*}
		In the previous estimate, we have used the fact that $e^{t_1}\varepsilon^{1- \alpha}\Gamma_{d/2}^{1/2}(t_1 \varepsilon^{-2})\le 1 $ for sufficiently small $\varepsilon$. 
		Combining these estimates yields
		\begin{equation*}
			\|P^1_{t-t_1}u_\varepsilon(t_1,y)\|_q\lesssim t^{-\frac d4}e^t \varepsilon^{\frac d2- \alpha}+e^t\varepsilon^{2-2\bar \alpha- \alpha}\(\varepsilon^{1-\bar \alpha}\Gamma^{3/2}_{d/2}(t_1 \varepsilon^{-2}) \)^{N-1}\,.
		\end{equation*}
		for every $t\ge t_1$. By choosing $N$ sufficiently large, the previous inequality implies \eqref{est:mPu}.
	\end{proof}
\section{Front formation} 
\label{sec:weak_convergence}

\subsection{Tightness criteria}

In this subsection we recall a tightness criterion that is useful in the
upcoming discussion.	
Observe that a subset $\cff$ of $C_\loc(\R^d)$ is compact if and only if for
every compact set $K\subset \R^d$, the projection $\cff_K$ of $\cff$ onto $C(K)$
is compact. In particular the following is a consequence of the
Arzel\`a--Ascoli theorem and of the Morrey--Sobolev inequality.
	\begin{lemma}\label{lem:Xtight}
		Let $\{X_\varepsilon\}_{\varepsilon \in (0, 1)}$ be a family of stochastic processes with sample paths in $C_\loc(\R^d)$. Suppose that for every integer $n\ge1$, there exists $q>d$ such that
		\begin{equation}\label{con:t3}
			\sup_{\varepsilon}\sup_{|z|\le n}\E[ |X_\varepsilon(z)|^q+|\nabla X_\varepsilon(z)|^q]<\infty\,.
		\end{equation}
		Then the family of the laws of $X_\varepsilon$ on $C_\loc(\R^d)$ is tight. 
		In addition, if for every $n\ge1$, there exists a $q>d$ such that
		\begin{equation*}
			\limsup_{\varepsilon\downarrow0}\sup_{|z|\le n}\E [|X_\varepsilon(z)|^q+|\nabla X_\varepsilon(z)|^q]=0\,,
		\end{equation*}
		then $X_\varepsilon$ converges to 0 in probability with respect to the topology of $C_\loc(\R^d)$.
	\end{lemma}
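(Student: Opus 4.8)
The plan is to reduce everything to the component spaces $C(B_n)$, where $B_n \subseteq \R^d$ denotes the closed ball of radius $n$. Recall that $C_\loc(\R^d)$ is completely metrised by $\rho(f,g) = \sum_{n\ge1} 2^{-n}\big(1\wedge \|f-g\|_{C(B_n)}\big)$, and that by the observation preceding the statement a subset of $C_\loc(\R^d)$ is relatively compact precisely when its image in each $C(B_n)$ is relatively compact. The analytic input I would establish first is that the pointwise moment bound upgrades to a Sobolev bound: fixing $n$ and letting $q_n > d$ be the exponent supplied by the hypothesis, Tonelli's theorem gives
\[
\E\big[\|X_\ve\|_{W^{1,q_n}(B_n)}^{q_n}\big] = \int_{B_n} \E\big[|X_\ve(z)|^{q_n}+|\nabla X_\ve(z)|^{q_n}\big]\,\ud z \le |B_n|\,\sup_{|z|\le n}\E\big[|X_\ve(z)|^{q_n}+|\nabla X_\ve(z)|^{q_n}\big] =: C_n < \infty,
\]
with $C_n$ independent of $\ve$. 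Since $q_n > d$, the Morrey--Sobolev inequality yields $\|f\|_{C^{0,1-d/q_n}(B_n)} \lesssim \|f\|_{W^{1,q_n}(B_n)}$, so bounded subsets of $W^{1,q_n}(B_n)$ are uniformly bounded and equi-Hölder on $B_n$, hence relatively compact in $C(B_n)$ by Arzel\`a--Ascoli.

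For tightness, fix $\zeta \in (0,1)$. By Markov's inequality and the bound above, for each $n$ I would choose $M_n$ so large that $\PP\big(\|X_\ve\|_{W^{1,q_n}(B_n)} > M_n\big) \le C_n M_n^{-q_n} < \zeta\, 2^{-n}$ for every $\ve$, and set $\mathcal{K}_\zeta = \{ f \in C_\loc(\R^d) : \|f\|_{W^{1,q_n}(B_n)} \le M_n \text{ for every } n \ge 1 \}$. Then $\PP(X_\ve \notin \mathcal{K}_\zeta) \le \sum_{n\ge1} \zeta\, 2^{-n} = \zeta$, so $\inf_\ve \PP(X_\ve \in \mathcal{K}_\zeta) \ge 1-\zeta$. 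Finally $\mathcal{K}_\zeta$ is compact: from any sequence in $\mathcal{K}_\zeta$ a diagonal extraction produces, using the compact embeddings $W^{1,q_n}(B_n) \hookrightarrow\hookrightarrow C(B_n)$, a subsequence converging in every $C(B_n)$ and hence in $C_\loc(\R^d)$, while weak lower semicontinuity of the norms $\|\cdot\|_{W^{1,q_n}(B_n)}$ shows the limit again lies in $\mathcal{K}_\zeta$. This proves the first claim.

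For the second claim one repeats the same computation but keeps the $\limsup$: applying Morrey--Sobolev once more,
\[
\E\big[\|X_\ve\|_{C(B_n)}^{q_n}\big] \lesssim \E\big[\|X_\ve\|_{W^{1,q_n}(B_n)}^{q_n}\big] \le |B_n|\,\sup_{|z|\le n}\E\big[|X_\ve(z)|^{q_n}+|\nabla X_\ve(z)|^{q_n}\big] \xrightarrow[\ve\downarrow0]{} 0,
\]
so by Markov's inequality $\PP(\|X_\ve\|_{C(B_n)} > \delta) \to 0$ as $\ve \downarrow 0$ for every $n\ge1$ and $\delta > 0$, which is exactly convergence of $X_\ve$ to $0$ in probability in $C_\loc(\R^d)$. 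The only step requiring genuine care is the passage from pointwise moment control to control of the Sobolev norm, together with the use of the compact embedding $W^{1,q_n}(B_n) \hookrightarrow\hookrightarrow C(B_n)$ (with both the exponent $q_n$ and the radius $n$ allowed to vary); the rest is a routine union bound over $n$ combined with the metrisability of $C_\loc(\R^d)$. One also tacitly uses that the sample paths of $X_\ve$ are regular enough (e.g.\ $C^1$, or at least weakly differentiable with the stated integrability) for $\|X_\ve\|_{W^{1,q_n}(B_n)}$ to be well defined, which is implicit in the hypothesis.
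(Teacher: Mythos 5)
Your argument is correct and is exactly the route the paper intends: upgrade the pointwise moment bounds to $W^{1,q}$ bounds on balls, then use Morrey--Sobolev and Arzel\`a--Ascoli (equivalently the compact embedding $W^{1,q}(B_n)\hookrightarrow C(B_n)$ for $q>d$) together with Markov's inequality and a union bound over the exhausting balls, which is precisely the combination announced just before the statement. The closing remarks on weak lower semicontinuity of the Sobolev norms and on the implicit weak differentiability of the sample paths are the right points to flag, and nothing further is missing.
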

In particular, we deduce the following criterion.

	\begin{lemma}\label{lem:Pg}
		Let $\{g_\varepsilon\}_{\varepsilon \in (0, 1)}$ be a family of random fields on $\R^d$. Let $t(\varepsilon)$ and $L(\varepsilon)$ be positive numbers such that 
$			\lim_{\varepsilon\to0}t(\varepsilon)=\infty$.
		Assume that there exists a $q>d+1$ such that
		\begin{equation*}
			\sup_{\varepsilon}\(1+\frac{L(\varepsilon)}{\sqrt{t(\varepsilon)}}
\) e^{t(\varepsilon)}\sup_{y\in\R^d}\|g_\varepsilon(y)\|_{L^{q}(\Omega)}<\infty\,.
		\end{equation*}
		Then the family $\{P^1_{t(\varepsilon)+t}g_\varepsilon(L(\varepsilon)x):(t,x)\in\R\times\R^d \}_\varepsilon$ is tight on $\Cloc$.
		In addition, if 
		\begin{equation*}
			\limsup_{\varepsilon\to0}\(1+\frac{L(\varepsilon)}{\sqrt{t(\varepsilon)}}
\)e^{t(\varepsilon)}\sup_{y\in\R^d}\|g_\varepsilon(y)\|_{L^{q}(\Omega)}=0\,.
		\end{equation*}
		then $\{P^1_{t(\varepsilon)+t}g_\varepsilon(L(\varepsilon)x):(t,x)\in\R\times\R^d \}$ converges in probability to 0 in $\Cloc$.
	\end{lemma}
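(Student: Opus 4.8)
The plan is to deduce both assertions from the tightness criterion of Lemma~\ref{lem:Xtight}, applied not to $g_\ve$ directly but to the \emph{space-time} random field $Y_\ve(t,x)\eqdef P^1_{t(\ve)+t}\,g_\ve(L(\ve)x)$, $(t,x)\in\R\times\R^d$, regarded as a process on $\R^{d+1}$. This also explains the numerology: the index set is now $(d+1)$-dimensional, so the Morrey--Sobolev input behind Lemma~\ref{lem:Xtight} requires $q>d+1$ rather than $q>d$. First I would note that, since $t(\ve)\to\infty$, on any fixed box $\{|t|\le n\}\times\{|x|\le n\}$ one has $t(\ve)+t>1$ once $\ve$ is small, so $Y_\ve$ is there well defined and, by the smoothing property of the heat semigroup, has smooth sample paths; the finitely many remaining $\ve$ may be extended arbitrarily and do not affect tightness. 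Writing $A_\ve\eqdef\bigl(1+L(\ve)/\sqrt{t(\ve)}\bigr)e^{t(\ve)}\sup_y\|g_\ve(y)\|_{L^q(\Omega)}$ for the quantity in the hypothesis, it then suffices to bound $\E|Y_\ve(t,x)|^q$, $\E|\nabla_x Y_\ve(t,x)|^q$ and $\E|\partial_t Y_\ve(t,x)|^q$, uniformly over small $\ve$ and over $(t,x)$ in such a box, by a box-dependent constant times $A_\ve^q$.

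For the value I would use $P^1_s=e^s e^{s\Delta}$ and that $e^{s\Delta}$ is convolution against a probability density, so Minkowski's integral inequality gives $\|Y_\ve(t,x)\|_{L^q(\Omega)}\le e^{t(\ve)+t}\sup_y\|g_\ve(y)\|_{L^q(\Omega)}\le e^t A_\ve$, using $1+L(\ve)/\sqrt{t(\ve)}\ge1$. For the spatial gradient the chain rule brings out a factor $L(\ve)$, while $\nabla P^1_s g=e^s(\nabla p_s)*g$ with $\|\nabla p_s\|_{L^1(\R^d)}\lesssim s^{-1/2}$; since $t(\ve)+t\gtrsim t(\ve)$ on the box this gives $\|\nabla_x Y_\ve(t,x)\|_{L^q(\Omega)}\lesssim e^t\bigl(L(\ve)/\sqrt{t(\ve)}\bigr)e^{t(\ve)}\sup_y\|g_\ve(y)\|_{L^q(\Omega)}\lesssim e^t A_\ve$, this time using $1+L(\ve)/\sqrt{t(\ve)}\ge L(\ve)/\sqrt{t(\ve)}$; this is exactly the step for which the weight in the hypothesis is calibrated. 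For the time derivative, $\partial_t Y_\ve(t,x)=\bigl((\Delta+1)P^1_{t(\ve)+t}g_\ve\bigr)(L(\ve)x)$ involves a Laplacian, so I would peel off a unit of time, $P^1_{t(\ve)+t}=P^1_1 P^1_{t(\ve)+t-1}$: since $(\Delta+1)P^1_1$ is convolution with the $L^1$ kernel $e(\Delta p_1+p_1)$, one further application of Minkowski yields $\|\partial_t Y_\ve(t,x)\|_{L^q(\Omega)}\lesssim\|P^1_{t(\ve)+t-1}g_\ve(L(\ve)x)\|_{L^q(\Omega)}\lesssim e^t A_\ve$.

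Combining the three bounds, for every $n$ there is $q>d+1$ with $\sup_\ve\sup_{|(t,x)|\le n}\E\bigl[|Y_\ve|^q+|\nabla_x Y_\ve|^q+|\partial_t Y_\ve|^q\bigr]<\infty$, so the space-time version of the first part of Lemma~\ref{lem:Xtight} gives tightness of $\{Y_\ve\}$ on $\Cloc$, which is the first claim. For the second, the very same estimates show that $\limsup_{\ve\to0}A_\ve=0$ forces $\limsup_{\ve\to0}\sup_{|(t,x)|\le n}\E[\,\cdots\,]=0$ for every $n$, and the second part of Lemma~\ref{lem:Xtight} then yields convergence of $Y_\ve$ to $0$ in probability in $\Cloc$.

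The one step requiring genuine care is the bound on $\partial_t Y_\ve$: because $\partial_t P^1_s=(\Delta+1)P^1_s$ brings down a Laplacian, one must invoke parabolic regularisation (the unit-time factorisation above, or equivalently $\|\Delta p_s\|_{L^1(\R^d)}\lesssim s^{-1}$ together with $t(\ve)+t\to\infty$). Everything else is bookkeeping of the rescaling factor $L(\ve)$ and of the weight $1+L(\ve)/\sqrt{t(\ve)}$, which is designed precisely so that the spatial-gradient estimate closes.
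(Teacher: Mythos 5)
Your proposal is correct and follows essentially the same route as the paper: the paper's proof consists precisely of the three $L^q(\Omega)$ estimates you derive (the value bound via $e^{t(\ve)+t}$, the spatial-gradient bound with the factor $L(\ve)(t(\ve)+t)^{-1/2}$ from the rescaling and $\|\nabla p_s\|_{L^1}\lesssim s^{-1/2}$, and the time-derivative bound with factor $1+(t(\ve)+t)^{-1}$ from $\partial_t P^1_s=(\Delta+1)P^1_s$ and parabolic smoothing), combined with Lemma~\ref{lem:Xtight} applied to the space-time field, which is exactly why $q>d+1$ appears. Your additional remarks on the unit-time factorisation and on the calibration of the weight $1+L(\ve)/\sqrt{t(\ve)}$ just make explicit what the paper leaves implicit.
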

	\begin{proof}
		This is a consequence of the following estimates:
		\begin{align*}
			&\|P^1_{t(\varepsilon)+t} g_\varepsilon(L(\varepsilon) x)\|_q\le e^{t(\varepsilon)+t} \sup_y\|g_\varepsilon(y)\|_q\,,
			\\&\|\nabla P^1_{t(\varepsilon)+t} g_\varepsilon(L(\varepsilon)x)\|_q\lesssim L(\varepsilon)(t(\varepsilon)+t)^{-1/2} e^{t(\varepsilon)+t}\sup_y\|g_\varepsilon(y)\|_q\,,
			\\&\|\partial_t P^1_{t(\varepsilon)+t} g_\varepsilon(L(\varepsilon)x)\|_q\lesssim (1+(t(\varepsilon)+t)^{-1}) e^{t(\varepsilon)+t}\sup_y\|g_\varepsilon(y)\|_q\,,
		\end{align*}
		in conjunction with Lemma \ref{lem:Xtight}.
	\end{proof}

\subsection{Weak convergence}

Now recall $ t_{2}(\ve) $ from \eqref{eqn:t-1-2} and define	
	\begin{equation*}
		t_2^\kappa (\ve) \eqdef \Big( \frac d2- \alpha \Big) \log \varepsilon^{-1}-\(\kappa+\frac12\)\log\log \varepsilon^{-1}.
	\end{equation*}
Then we can use the tightness criteria above to describe the behaviour of $
u_{\ve} $ up to times of order $ t_{2}^{\kappa}(\ve) $.

	\begin{lemma}\label{lem:S}
		For every $N>d/(2-2\bar \alpha)-1$ and every $\kappa\ge0$ 
		the process 
		\begin{equation*}
		 (t,x) \mapsto \big(P^1_{t_\star^\kappa(\varepsilon)+t-t_2^\kappa(\varepsilon)}
		 	|u_\varepsilon(t_2^{\kappa}(\varepsilon),\cdot)-P^1_{t_2^\kappa(\varepsilon)-t_1(\varepsilon)}u^N_\varepsilon(t_1(\varepsilon),\cdot)|\big)(x
L_{\ve})\;,
		\end{equation*} converges to 0 in probability on $\Cloc$.
	\end{lemma}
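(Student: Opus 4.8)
The plan is to reduce the claim to the ``convergence to $0$'' part of Lemma~\ref{lem:Pg}, applied to the nonnegative random field
\[
g_\varepsilon(\cdot)\eqdef\bigl|u_\varepsilon(t_2^\kappa(\varepsilon),\cdot)-P^1_{t_2^\kappa(\varepsilon)-t_1(\varepsilon)}u^N_\varepsilon(t_1(\varepsilon),\cdot)\bigr|\;,
\]
with $t(\varepsilon)\eqdef t_\star^\kappa(\varepsilon)-t_2^\kappa(\varepsilon)$ and $L(\varepsilon)\eqdef L_\varepsilon$, since the process in the statement is exactly $(t,x)\mapsto P^1_{t(\varepsilon)+t}g_\varepsilon(L(\varepsilon)x)$. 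From the definitions \eqref{eqn:t-star}, \eqref{eqn:t-star-kappa} and that of $t_2^\kappa$ one computes $t(\varepsilon)=(\tfrac d4+2\kappa+\tfrac12)\log\log\varepsilon^{-1}+\mf{c}$, so $t(\varepsilon)\to\infty$, $e^{t(\varepsilon)}\lesssim(\log\varepsilon^{-1})^{d/4+2\kappa+1/2}$, and, since $L_\varepsilon=\sqrt{T_\varepsilon}\lesssim\sqrt{\log\varepsilon^{-1}}$, the prefactor in Lemma~\ref{lem:Pg} satisfies $\bigl(1+L_\varepsilon/\sqrt{t(\varepsilon)}\bigr)e^{t(\varepsilon)}\lesssim(\log\varepsilon^{-1})^{d/4+2\kappa+1}$. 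Hence, fixing any $q>d+1$, it suffices to show that $\sup_{y\in\R^d}\|g_\varepsilon(y)\|_{L^q(\Omega)}$ decays faster than $(\log\varepsilon^{-1})^{-(d/4+2\kappa+1)}$. By the triangle inequality I split $g_\varepsilon\le|v_\varepsilon(t_2^\kappa(\varepsilon),\cdot)|+P^1_{t_2^\kappa(\varepsilon)-t_1(\varepsilon)}\bigl|u_\varepsilon(t_1(\varepsilon),\cdot)-u^N_\varepsilon(t_1(\varepsilon),\cdot)\bigr|$, where $v_\varepsilon(s,\cdot)\eqdef u_\varepsilon(s,\cdot)-P^1_{s-t_1(\varepsilon)}u_\varepsilon(t_1(\varepsilon),\cdot)$ for $s\ge t_1(\varepsilon)$.

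The second (Wild truncation) term is controlled by the estimates of Section~\ref{sec:wild-expansion}. By Lemma~\ref{lem:remainder} and the proof of Proposition~\ref{prop:uuN}, estimate \eqref{est:uuN} applies at $t=t_1(\varepsilon)=(\bar\alpha-\alpha)\log\varepsilon^{-1}<(1-\alpha)\log\varepsilon^{-1}$, and inserting $e^{t_1(\varepsilon)}\varepsilon^{1-\alpha}=\varepsilon^{1-\bar\alpha}$, $e^{3t_1(\varepsilon)}\varepsilon^{2-3\alpha}=\varepsilon^{2-3\bar\alpha}$ and the elementary bound on $\Gamma_{d/2}(t_1(\varepsilon)\varepsilon^{-2})$ gives $\sup_y\|u_\varepsilon(t_1(\varepsilon),y)-u^N_\varepsilon(t_1(\varepsilon),y)\|_{L^q}\lesssim\varepsilon^{2-3\bar\alpha+(1-\bar\alpha)(N-1)}(\log\varepsilon^{-1})^{m}$ for some $m\ge0$. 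Applying $P^1_{t_2^\kappa(\varepsilon)-t_1(\varepsilon)}$ costs a factor $e^{t_2^\kappa(\varepsilon)-t_1(\varepsilon)}=\varepsilon^{-(d/2-\bar\alpha)}(\log\varepsilon^{-1})^{-(\kappa+1/2)}$, so the second term is, uniformly over $x\in\R^d$, at most $\varepsilon^{\theta_N}(\log\varepsilon^{-1})^{m'}$ in $L^q$, where $\theta_N\eqdef(1-\bar\alpha)(N+1)-d/2$ and $m'\ge0$. The point is that $\theta_N>0$ exactly when $N>\tfrac d{2-2\bar\alpha}-1$ — this is where the hypothesis on $N$ is used — so this term vanishes like a positive power of $\varepsilon$ and, a fortiori, faster than any power of $\log\varepsilon^{-1}$.

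For the first term I would use the comparison principle. Set $w_\varepsilon(s,\cdot)\eqdef P^1_{s-t_1(\varepsilon)}u_\varepsilon(t_1(\varepsilon),\cdot)$, so that $u_\varepsilon=v_\varepsilon+w_\varepsilon$ and $(\partial_t-\Delta-1)w_\varepsilon=0$; then by \eqref{eqn:uep} the remainder solves $(\partial_t-\Delta-1)v_\varepsilon=-(v_\varepsilon^2+3v_\varepsilon w_\varepsilon+3w_\varepsilon^2)v_\varepsilon-w_\varepsilon^3$ on $[t_1(\varepsilon),t_2^\kappa(\varepsilon)]\times\R^d$ with $v_\varepsilon(t_1(\varepsilon),\cdot)=0$. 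Since $v^2+3vw+3w^2=(v+\tfrac32w)^2+\tfrac34w^2\ge0$ (and $u_\varepsilon$, hence $v_\varepsilon,w_\varepsilon$, is smooth), Lemma~\ref{lem:abs.v} yields $|v_\varepsilon(t,x)|\le\int_{t_1(\varepsilon)}^{t}P^1_{t-s}\bigl(|w_\varepsilon(s,\cdot)|^3\bigr)(x)\,\ud s$. Taking $L^q(\Omega)$-norms, Minkowski's inequality and $\bigl\||w_\varepsilon(s,y)|^3\bigr\|_{L^q}=\|w_\varepsilon(s,y)\|_{L^{3q}}^3$ reduce matters to bounding $\sup_y\|w_\varepsilon(s,y)\|_{L^{3q}}$, which is exactly estimate \eqref{est:mPu}: with $c>1$ chosen so that $c\log\varepsilon^{-1}>t_2^\kappa(\varepsilon)$ one has $\sup_y\|w_\varepsilon(s,y)\|_{L^{3q}}\lesssim s^{-d/4}e^{s}\varepsilon^{d/2-\alpha}$ for $s\in(t_1(\varepsilon),t_2^\kappa(\varepsilon))$. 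Inserting this and evaluating the resulting elementary time integral — which is dominated by its upper endpoint because $e^{2s}$ is increasing while $t_1(\varepsilon)\asymp\log\varepsilon^{-1}$ — gives $\sup_y\|v_\varepsilon(t_2^\kappa(\varepsilon),y)\|_{L^q}\lesssim\varepsilon^{3(d/2-\alpha)}e^{3t_2^\kappa(\varepsilon)}(t_1(\varepsilon))^{-3d/4}\lesssim(\log\varepsilon^{-1})^{-3\kappa-3/2-3d/4}$, using $e^{t_2^\kappa(\varepsilon)}=\varepsilon^{-(d/2-\alpha)}(\log\varepsilon^{-1})^{-(\kappa+1/2)}$.

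Combining the two bounds, $\sup_y\|g_\varepsilon(y)\|_{L^q}\lesssim(\log\varepsilon^{-1})^{-3\kappa-3/2-3d/4}+\varepsilon^{\theta_N}(\log\varepsilon^{-1})^{m'}$, so the normalised quantity $\bigl(1+L_\varepsilon/\sqrt{t(\varepsilon)}\bigr)e^{t(\varepsilon)}\sup_y\|g_\varepsilon(y)\|_{L^q}$ is at most a constant times $(\log\varepsilon^{-1})^{-\kappa-1/2-d/2}+\varepsilon^{\theta_N}(\log\varepsilon^{-1})^{m''}$, which tends to $0$ as $\varepsilon\to0$ because $\kappa\ge0$, $d\ge2$ and $\theta_N>0$. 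Lemma~\ref{lem:Pg} then gives the claimed convergence to $0$ in probability on $\Cloc$. I expect the main difficulty to be the first term: on the window $[t_1(\varepsilon),t_2^\kappa(\varepsilon)]$ the straightforward estimate \eqref{est:mu} is far from sharp — its right-hand side need not even stay bounded at $t=t_2^\kappa(\varepsilon)$ when $d=2$ — so one genuinely has to compare $u_\varepsilon$ with the pure linear evolution of $u_\varepsilon(t_1(\varepsilon),\cdot)$, feed the cubic remainder into Lemma~\ref{lem:abs.v}, and bound it via the sharp linear estimate \eqref{est:mPu}; the rest is bookkeeping — keeping the time windows inside the ranges of validity of \eqref{est:uuN} and \eqref{est:mPu}, and tracking the powers of $\log\varepsilon^{-1}$.
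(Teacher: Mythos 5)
Your proof is correct and follows essentially the same route as the paper: reduce to Lemma~\ref{lem:Pg} through a uniform-in-space $L^q(\Omega)$ bound at time $t_2^\kappa$, obtained from the comparison principle of Lemma~\ref{lem:abs.v} with a cubic source term and the linear estimates of Proposition~\ref{prop:uuN}, then track the powers of $\log\varepsilon^{-1}$ and use $N>d/(2-2\bar\alpha)-1$ to kill the truncation error. The only (harmless) difference is that you compare $u_\varepsilon$ with $P^1_{\cdot-t_1}u_\varepsilon(t_1)$ and peel off $P^1_{t_2^\kappa-t_1}|u_\varepsilon-u^N_\varepsilon|(t_1)$ by a triangle inequality, bounding the cubic source via \eqref{est:mPu}, whereas the paper compares directly with $P^1_{\cdot-t_1}u^N_\varepsilon(t_1)$ so that the truncation error enters as the initial condition in Lemma~\ref{lem:abs.v}; the resulting estimates are identical in structure and rate.
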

	\begin{proof}
		For each $t\ge t_1$, we define
		\begin{align*}
			S^N_\varepsilon(t,x)=u_\varepsilon(t,x)-P^1_{t-t_1}u^N_\varepsilon(t_1,x)\,.
		\end{align*}
		Since $h_\varepsilon(t,x) \eqdef P^1_{t-t_1}u^N_\varepsilon(t_1,x)$ solves $(\partial_t- \Delta-1)h=0$ with initial condition (at time $t_1$) $h(t_1,x)=u^N_\varepsilon(t_1,x)$, the difference $S^N_\varepsilon=u_\varepsilon-h_\varepsilon$ satisfies
		\begin{equation*}
			(\partial_t- \Delta-1)S^N_\varepsilon=-\((S^N_\varepsilon)^2+3(S^N_\varepsilon) h_\varepsilon+3h_\varepsilon^2\)S^N_\varepsilon-h_\varepsilon^3 \quad\mbox{for}\quad t>t_1
		\end{equation*}
		with initial condition $S^N_\varepsilon(t_1,x)=u_\varepsilon(t_1,x)-u^N_\varepsilon(t_1,x) $.
		An application of Lemma~\ref{lem:abs.v} gives
		\begin{equation*}
			|S^N_\varepsilon(t_2^\kappa,x)|\le
\int_{t_1}^{t_2^\kappa}\int_{\R^d}
p^1_{t_2^\kappa-s}(x-y)|P^1_{s-t_1}u_\varepsilon^{N}(t_1,y)|^3 \ud y \ud s
			+P^1_{t_2^\kappa-t_1}|u_\varepsilon-u^N_\varepsilon|(t_1,x) \,.
		\end{equation*}
		Let $q>d+1$ be fixed. Applying Proposition~\ref{prop:uuN}, we have 
		\begin{align*}
			\sup_y\|S^N_\varepsilon(t_2^\kappa,y)\|_q
			&\lesssim 
			\int_{t_1}^{t_2^\kappa}e^{t_2^\kappa-s}\(e^{s}
s^{-\frac d4}\varepsilon^{\frac d2- \alpha}\)^3 \ud s
			+ e^{t_2^\kappa-t_1} \varepsilon^{2-3\bar \alpha}\(\varepsilon^{1-\bar \alpha}\Gamma_{d/2}^{3/2}(t_1 \varepsilon^{-2}) \)^{N-1}
		\end{align*}
		and hence 
		\begin{equs}
			e^{t_\star^\kappa-t_2^\kappa}\sup_y\|S^N_\varepsilon(t_2^\kappa,y)\|_{q}
			&\lesssim  e^{t_\star^\kappa-t_2^\kappa}
\(e^{t_2^\kappa}\varepsilon^{\frac d2 -\alpha}t_1^{-\frac d4}\)^3 (t_2^\kappa -t_1) \\
&\quad +e^{t_\star^\kappa-t_1}\varepsilon^{2-3\bar \alpha}\(\varepsilon^{1-\bar \alpha}\Gamma_{d/2}^{3/2}(t_1 \varepsilon^{-2}) \)^{N-1}\,.
		\end{equs}
		From the definitions it is evident that
		\begin{align*}
			e^{t_\star^\kappa-t_2^\kappa}
\(e^{t_2^\kappa}\varepsilon^{\frac d2 -\alpha}t_1^{-\frac d4}\)^3(t_2^\kappa-t_1)
			\lesssim \(\log \varepsilon^{-1}\)^{-\frac d2- \kappa}.
		\end{align*}
We observe that at this point we used that $ t_{2}(\ve) $ contains an
additional negative term $ - \frac{1}{2} \log{\log{\ve^{-1}}} $, at least in
dimension $ d =2 $, to obtain an upper bound that vanishes as $ \ve \to 0 $.
		As for the second term, we have
		\begin{align*}
			e^{t_\star^\kappa-t_1}\varepsilon^{2-3\bar \alpha}\(\varepsilon^{1-\bar \alpha}\Gamma_{d/2}^{3/2}(t_1 \varepsilon^{-2}) \)^{N-1}
			\lesssim \varepsilon^{(1-\bar \alpha)(N+1)-\frac d2}\(\log \varepsilon^{-1}\)^{-\kappa}\Gamma_{d/2}^{3(N-1)/2}(t_1 \varepsilon^{-2})\,,
		\end{align*}
		which also vanishes as $\varepsilon\to0$ by our choice of $N$.
		It follows that
		\begin{equation*}
			\limsup_{\varepsilon\to0}\(1+\frac{L_\varepsilon}{\sqrt{t_\star^\kappa-t_2^\kappa}} \) e^{t_\star^\kappa-t_2^\kappa}\sup_y\|S^N_\varepsilon(t_2^\kappa,y)\|_{q}=0\,.
		\end{equation*}
		Applying the second part of Lemma \ref{lem:Pg}, the above
estimates imply that the process $(t,x)\mapsto
P^1_{t_\star^\kappa-t_2^\kappa+t}S^N_\varepsilon(t_2^\kappa,L_\varepsilon x)$
converges to 0 in probability in $C_\loc(\R\times\R^d)$ .
	\end{proof}
Next, we observe that the truncated Wild expansion converges to the Bargmann--Fock
field around time $ t_{\star} (\ve) $.

	\begin{lemma}\label{lem:Pu1convg}
		For every $\kappa\ge0$, as $\varepsilon\to0$, we have:
		\begin{enumerate}[(i)]
			\item The process $\{e^{-\kappa\log\log
\varepsilon^{-1}} P^1_{t_\star^\kappa-t_1+t}X^\bullet_{\ve}(t_1,L_\varepsilon
x):(t,x)\in\R\times\R^d \}$ converges in law in $C_{\loc}(\R\times\R^d)$ to
$\{e^t \Psi(x):(t,x)\in\R\times\R^d \} $ with $ \Psi $ as in \eqref{eqn:def-Psi}.
			\item For each $\tau\in\tree_3\setminus\{\bullet\}$,
the process $\{e^{-\kappa\log\log
\varepsilon^{-1}}P^1_{t_\star^\kappa-t_1+t}X^\tau_{\ve}(t_1,L_\varepsilon x):(t,x)\in\R\times\R^d \}$ converges in probability to $0$ in $C_{\loc}(\R\times\R^d)$.
		\end{enumerate}
	\end{lemma}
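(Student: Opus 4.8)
The plan is to reduce both parts to the moment bounds of Proposition~\ref{prop:momentX} together with the tightness criterion of Lemma~\ref{lem:Pg}, exploiting that for $\tau=\bullet$ the field in question is \emph{exactly} Gaussian. For (i), the first step is to note that, since $X^\bullet_\ve$ solves the linear equation \eqref{eqn:Xtrunk}, one has $X^\bullet_\ve(t_1,\cdot)=P^1_{t_1}\eta_\ve$, so by the semigroup property the process of interest equals
\begin{equs}
\Xi_\ve(t,x)\eqdef e^{-\kappa\log\log\ve^{-1}}P^1_{t_\star^\kappa(\ve)-t_1+t}X^\bullet_\ve(t_1,L_\ve x)=e^{-\kappa\log\log\ve^{-1}}\bigl(P^1_{t_\star^\kappa(\ve)+t}\eta_\ve\bigr)(L_\ve x)\,.
\end{equs}
As a linear image of white noise, $\Xi_\ve$ is a centred Gaussian field with smooth sample paths, so its convergence in law on $\Cloc$ follows from (a) tightness and (b) pointwise convergence of the covariance $\E[\Xi_\ve(t,x)\Xi_\ve(s,y)]\to e^{t+s}\exp(-|x-y|^2/8)$, the latter being precisely the covariance of $\{e^t\Psi(x)\}$ by \eqref{eqn:def-Psi} with $\sigma=1$. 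Tightness I would read off from Lemma~\ref{lem:Pg} applied to $g_\ve=e^{-\kappa\log\log\ve^{-1}}X^\bullet_\ve(t_1,\cdot)$ with $t(\ve)=t_\star^\kappa(\ve)-t_1$ and $L(\ve)=L_\ve$: Proposition~\ref{prop:momentX} with $\ell(\bullet)=1$ gives $\|X^\bullet_\ve(t_1,y)\|_{L^q(\Omega)}\lesssim t_1^{-d/4}e^{t_1}\ve^{d/2-\alpha}$ for every $q\ge2$, and inserting the definitions of $t_\star(\ve)$, $T_\ve$ and $\mf{c}$ from \eqref{eqn:t-star} (so that $e^{t_\star(\ve)}\ve^{d/2-\alpha}\lesssim(\log\ve^{-1})^{d/4}$) one finds $e^{t(\ve)}\sup_y\|g_\ve(y)\|_{L^q}\lesssim(\log\ve^{-1}/t_1)^{d/4}=O(1)$ since $t_1=(\bar\alpha-\alpha)\log\ve^{-1}$, while $1+L_\ve/\sqrt{t(\ve)}$ is bounded because $L_\ve^2=T_\ve$ and $t(\ve)\sim T_\ve$.

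The covariance computation is where the real work lies, and it is the step I expect to be the main obstacle. Since $\eta$ is white noise, \eqref{e:defEtax} gives $\E[\eta_\ve(a)\eta_\ve(b)]=\ve^{d-2\alpha}\int_{\R^d}\phi^\ve_a(w)\phi^\ve_b(w)\,\ud w$; writing $T_1\eqdef t_\star^\kappa(\ve)+t$, $T_2\eqdef t_\star^\kappa(\ve)+s$ and $p_r(z)=(4\pi r)^{-d/2}e^{-|z|^2/(4r)}$, this yields
\begin{equs}
\E[\Xi_\ve(t,x)\Xi_\ve(s,y)]=e^{-2\kappa\log\log\ve^{-1}}e^{T_1+T_2}\ve^{d-2\alpha}\int_{\R^d}(p_{T_1}*m_\ve)(L_\ve x-w)\,(p_{T_2}*m_\ve)(L_\ve y-w)\,\ud w\,,
\end{equs}
where $m_\ve$ is the scale-$\ve$ mollifier built from $\phi$. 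Because $\phi$ integrates to $1$ while $p_{T_i}$ varies on scale $\sqrt{T_i}\sim\sqrt{\log\ve^{-1}}\gg\ve$, I would first estimate away the convolutions with $m_\ve$ at the cost of a vanishing error, and then use the Chapman--Kolmogorov identity to collapse the $w$-integral to $p_{T_1+T_2}(L_\ve(x-y))\,(1+o(1))$. It then remains to plug in $T_1+T_2=(d-2\alpha)\log\ve^{-1}+\tfrac d2\log\log\ve^{-1}+2\mf{c}+t+s$ and $L_\ve^2=T_\ve=(\tfrac d2-\alpha)\log\ve^{-1}$: the exponent $L_\ve^2|x-y|^2/\bigl(4(T_1+T_2)\bigr)$ tends to $|x-y|^2/8$, while the remaining prefactor $e^{-2\kappa\log\log\ve^{-1}}e^{T_1+T_2}\ve^{d-2\alpha}\bigl(4\pi(T_1+T_2)\bigr)^{-d/2}$ telescopes exactly to $e^{t+s}$ thanks to the choice of $\mf{c}$ in \eqref{eqn:t-star}. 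The delicate part is to make this bookkeeping of logarithmic prefactors rigorous and to verify that all error terms are uniform on compact sets of $(t,x,s,y)$, which also yields the locally uniform statement of the lemma.

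For (ii), I would repeat the tightness estimate above for a general $\tau\in\tree_3\setminus\{\bullet\}$, for which $\ell(\tau)\ge3$. Proposition~\ref{prop:momentX} now carries an extra factor $\bigl(e^{t_1}\ve^{1-\alpha}\Gamma_{d/2}^{1/2}(t_1\ve^{-2})\bigr)^{\ell(\tau)-1}$ relative to the $\tau=\bullet$ case, so that with $g_\ve=e^{-\kappa\log\log\ve^{-1}}X^\tau_\ve(t_1,\cdot)$ and $t(\ve)=t_\star^\kappa(\ve)-t_1$,
\begin{equs}
e^{t(\ve)}\sup_y\|g_\ve(y)\|_{L^q(\Omega)}\lesssim(\log\ve^{-1}/t_1)^{d/4}\bigl(e^{t_1}\ve^{1-\alpha}\Gamma_{d/2}^{1/2}(t_1\ve^{-2})\bigr)^{\ell(\tau)-1}\,.
\end{equs}
Since $t_1=(\bar\alpha-\alpha)\log\ve^{-1}$ one has $e^{t_1}\ve^{1-\alpha}=\ve^{1-\bar\alpha}\to0$, which beats the at most polylogarithmic growth of $\Gamma_{d/2}^{1/2}(t_1\ve^{-2})$, so the $\limsup$ in the second part of Lemma~\ref{lem:Pg} is $0$ (the factor $1+L_\ve/\sqrt{t(\ve)}$ again being bounded); hence the corresponding rescaled process converges to $0$ in probability on $\Cloc$, completing the argument.
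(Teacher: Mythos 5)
Your proposal is correct and follows essentially the same route as the paper: tightness via Lemma~\ref{lem:Pg} combined with the moment bounds of Propositions~\ref{prop:momentX}/\ref{prop:mPXtau}, then for (i) Gaussianity plus convergence of the covariance (where the choice of $\mf{c}$ and the identity $e^{t_\star}\ve^{d/2-\alpha}=e^{\tau_\star}$ do exactly the bookkeeping you describe), and for (ii) the extra factor $(\ve^{1-\bar\alpha}\Gamma_{d/2}^{1/2}(t_1\ve^{-2}))^{\ell(\tau)-1}\to 0$ fed into the second part of Lemma~\ref{lem:Pg}. The only cosmetic difference is that you compute the two-time covariance after removing the mollifier, whereas the paper keeps the mollified kernels $p^\ve$ and states the equal-time limit directly; both amount to the same computation.
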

	\begin{proof}
		We first show that the family $\{e^{-\kappa\log\log
\varepsilon^{-1}}P^1_{t_\star^\kappa-t_1+t}X^\tau_{\ve}(t_1,L_\varepsilon
x):(t,x)\in\R\times\R^d \}_\varepsilon$ is tight in $\Cloc$ for every $\tau\in\tree_3$.
		Hence, let us fix $q>d+1$. From Proposition \ref{prop:uuN}, we have
		\begin{align*}
			e^{t_\star^\kappa-t_1+t}e^{-\kappa\log\log
\varepsilon^{-1}}\sup_y\|X^\tau_{\ve}(t_1,y)\|_{L^q}\lesssim
e^{t_\star+t}\varepsilon^{\frac d2- \alpha}t_1^{-\frac d4}=\coo(1)\quad
\textrm{as}\quad\varepsilon\downarrow 0\,.
		\end{align*}
		In addition, we observe that
		\begin{equation*}
			\frac{L_\varepsilon}{\sqrt{t_\star^\kappa-t_1}}=\coo(1)
			\quad\textrm{as} \quad \varepsilon \downarrow0\,,
		\end{equation*}
		so that an application of Lemma \ref{lem:Pg} implies tightness
for the sequence.

		Let us now prove the first point. By construction, the process
$$(t,x)\mapsto e^{-\kappa\log\log
\varepsilon^{-1}}P^1_{t_\star-t_1+t}X^\bullet_{\ve}(t_1,L_\varepsilon
x)=e^{-\kappa\log\log \varepsilon^{-1}}
X^\bullet_{\ve}(t_\star^\kappa+t,L_\varepsilon x)$$ is a centered Gaussian
random field with covariance
		\begin{multline*}
			\E[e^{-\kappa\log\log
\varepsilon^{-1}}X^\bullet_{\ve}(t_\star^{\kappa}+t,L_\varepsilon
x)e^{-\kappa\log\log \varepsilon^{-1}}X^\bullet_{\ve}(t_\star^{\kappa}+t,L_\varepsilon y) ]
			\\=e^{2 t}e^{2t_\star}\varepsilon^{d-2\alpha}p^\varepsilon_{t_\star^\kappa+t}*p^\varepsilon_{t_\star^\kappa+t}(L_\varepsilon(x-y))\,.
		\end{multline*}
		Now, it is straightforward to verify that 
		\begin{equ}
			\lim_{\varepsilon\to0} \ (t_\star^{\kappa})^{-\frac d2}
e^{2t_\star}\varepsilon^{d-2 \alpha}=\(\frac d2- \alpha\)^{-\frac d2} e^{2 \mf{c}}
		\end{equ}
		with $ \mf{c} $ as in \eqref{eqn:t-star} and
		\begin{equation*}
			\lim_{\varepsilon\to0} \ (t_\star^{\kappa})^{\frac d2}
p^\varepsilon_{t_\star^\kappa+t}*p^\varepsilon_{t_\star^\kappa+t}(L_\varepsilon(x-y))=(8
\pi)^{-\frac d2}\exp\lt(-\frac{|x-y|^2}{8} \rt)\,,
		\end{equation*}
		so that (i) is verified.

		As for the second point, let $\tau\in\tree_3\setminus\{\bullet\}$ and $q$ be fixed such that $q>d+1$. From Proposition \ref{prop:mPXtau}, we have
		\begin{equation*}
			e^{t_\star^\kappa-t_1}e^{-\kappa\log\log \varepsilon^{-1}}\sup_{y}\|X^\tau_{\ve}(t_1,y)\|_q\lesssim \(\varepsilon^{1-\bar \alpha}\Gamma_{d/2}^{1/2}(t_1 \varepsilon^{-2})\)^{\ell(\tau)-1}\,.
		\end{equation*}
		Since $\bar \alpha<1$ and $\ell(\tau)>1$, the right-hand side
above vanishes as $\varepsilon\to0$. By Lemma \ref{lem:Pg}, this implies (ii).
	\end{proof}

	\begin{remark}
		It is possible to show that the process
$\{P^1_{t_\star-t_1+t}u_\varepsilon(t_1,L_\varepsilon
x):(t,x)\in\R\times\R^d \}$ converges in law to $\{e^t \Psi(x):(
t,x)\in\R\times\R^d \} $ in
$C_{\loc}(\R\times\R^d)$. However, this fact is not needed in what follows, so its proof is omitted.
	\end{remark}
	In the upcoming result we will work with the flow $
\overline{\Phi}(t,u) $ given by
	\begin{equation}\label{eqn:Phi-bar}
		\overline{\Phi}(t,u)=\frac{e^tu}{\(1+(e^{2t}-1)u^2
\)^{1/2}}\;,\qquad t\in \R\;,\quad u \in \RR\;,
	\end{equation}
which solves \eqref{eqn:Phi-new} with initial condition $
\overline{\Phi}(0, u) = u $ (the  feature that
distinguishes $ \Phi $ from $ \overline{\Phi} $ is the initial condition).
With this definition we have for $ t > 0 $  
	\begin{equs}[eqn:bounds-on-Phi]
		\partial_u \overline{\Phi}(t,u)&=e^t(1+u^2(e^{2t}-1))^{-\frac32}\,,
		\\\frac{\partial_u^2 \overline{\Phi}(t,u)}{\partial_u \overline{\Phi}(t,u)}&= 3u\frac{e^{2t}-1}{1+u^2(e^{2t}-1)}
		\quad\mbox{and}\quad|\partial_u^2 \overline{\Phi}(t,u)|\lesssim e^{2t} \,.
	\end{equs}
To obtain the last bound we observe that
\begin{equs}
| \partial_{u}^{2} \overline{\Phi}(t, u) | & = \frac{3 |u | e^{t}(e^{2t}-1)}{(1 +
  u^{2}(e^{2 t} -1))^{\frac{5}{2}}}.
\end{equs}
Then for $ |u| \leqslant  e^{- t} $ we have
$| \partial_{u}^{2} \overline{\Phi}(t, u)| \lesssim e^{2t} -1 \lesssim
e^{2 t},$
since in the denominator
$1 +  u^{2}(e^{2 t} -1) \geqslant 1$.
Instead for $ |u| > e^{-t} $ the leading term in the denominator
is $ u^{2}e^{2 t},$ so we find
\begin{align*}
  | \partial_{u}^{2} \Phi(t, u) | & = \frac{3 |u | e^{t}(e^{2t}-1)}{(1 +
  u^{2}(e^{2 t} -1))^{\frac{5}{2}}} \leqslant  \frac{3 |u | e^{t}(e^{2t}-1)}{
  u^{5}e^{5 t}} \lesssim \frac{1}{| u |^{4}} \frac{e^{3t}}{e^{5t}}  \lesssim e^{ 2t}.
\end{align*}
	Next, let us define for every $t>t_2^\kappa$
	\begin{align*}
		w^{N,\kappa}_\varepsilon(t,x)
		&=\overline{\Phi}(t-t_2^\kappa,P_{t-t_2^\kappa}P^1_{t_2^\kappa-t_1} u^N_\varepsilon(t_1,x))\\
&=\overline{\Phi}(t-t_2^\kappa,e^{-(t-t_2^\kappa)}P^1_{t-t_1} u^N_\varepsilon(t_1,x))\,.
	\end{align*}
	When $\kappa=0$, we simply write $w^{N}_{\ve}$ for $w^{N,0}_{\ve}$. Now
we can prove the main result of this section, which states that $ w^{N, \kappa}_{\ve} $ is a good approximation of $
u_{\ve} (t_{\star}^{\kappa}(\ve), x L_{\ve}) $. 
	\begin{proposition}\label{prop:d3}
		For every $\kappa\in[0,\frac14)$, the process 
		\begin{equation*}
		 	(t,x) \mapsto |u_\varepsilon(t_\star^\kappa(\ve)+t,x L_{\ve}
)-w^{N,\kappa}_\varepsilon(t_\star^\kappa(\ve) +t,x L_{\ve})|\;,
		\end{equation*} 
converges to $0$ in probability in $\Cloc$ as $\varepsilon\to0$.
	\end{proposition}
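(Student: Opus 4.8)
The plan is to realise $w^{N,\kappa}_\varepsilon$ as a solution of the Allen--Cahn equation up to an explicit remainder, to compare it with $u_\varepsilon$ through the maximum principle of Lemma~\ref{lem:abs.v}, and to estimate that remainder by means of Proposition~\ref{prop:mPXtau}, the contribution of the initial data being absorbed into Lemma~\ref{lem:S}. Concretely, write $v_\varepsilon(t,\cdot)\eqdef e^{-(t-t_2^\kappa)}P^1_{t-t_1}u^N_\varepsilon(t_1,\cdot)$, so that $v_\varepsilon$ solves the heat equation $\partial_t v_\varepsilon=\Delta v_\varepsilon$ with datum $v_\varepsilon(t_2^\kappa,\cdot)=P^1_{t_2^\kappa-t_1}u^N_\varepsilon(t_1,\cdot)$, and $w^{N,\kappa}_\varepsilon(t,\cdot)=\overline\Phi(t-t_2^\kappa,v_\varepsilon(t,\cdot))$ with $\overline\Phi$ as in \eqref{eqn:Phi-bar}. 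Since $\overline\Phi$ solves \eqref{eqn:Phi-new}, the chain rule gives, for $t>t_2^\kappa$,
\begin{equs}
(\partial_t-\Delta-1)w^{N,\kappa}_\varepsilon=-(w^{N,\kappa}_\varepsilon)^3-(\partial_u^2\overline\Phi)(t-t_2^\kappa,v_\varepsilon)\,|\nabla v_\varepsilon|^2\,,\qquad w^{N,\kappa}_\varepsilon(t_2^\kappa,\cdot)=v_\varepsilon(t_2^\kappa,\cdot)\,,
\end{equs}
the identity at $t_2^\kappa$ holding because $\overline\Phi(0,u)=u$.

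Set $d_\varepsilon=u_\varepsilon-w^{N,\kappa}_\varepsilon$; subtracting the above from \eqref{eqn:uep}, it solves $(\partial_t-\Delta-1)d_\varepsilon=-G_\varepsilon\,d_\varepsilon+(\partial_u^2\overline\Phi)(t-t_2^\kappa,v_\varepsilon)|\nabla v_\varepsilon|^2$ on $(t_2^\kappa,\infty)\times\R^d$ with $G_\varepsilon=u_\varepsilon^2+u_\varepsilon w^{N,\kappa}_\varepsilon+(w^{N,\kappa}_\varepsilon)^2\ge0$ and $d_\varepsilon(t_2^\kappa,\cdot)=u_\varepsilon(t_2^\kappa,\cdot)-P^1_{t_2^\kappa-t_1}u^N_\varepsilon(t_1,\cdot)$, all terms being smooth on the relevant finite time interval. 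Lemma~\ref{lem:abs.v} then yields $|d_\varepsilon(t,\cdot)|\le A_\varepsilon(t,\cdot)+B_\varepsilon(t,\cdot)$ for $t>t_2^\kappa$, where
\begin{equs}
A_\varepsilon(t,\cdot)=P^1_{t-t_2^\kappa}\big|u_\varepsilon(t_2^\kappa,\cdot)-P^1_{t_2^\kappa-t_1}u^N_\varepsilon(t_1,\cdot)\big|\,,\quad B_\varepsilon(t,\cdot)=\int_{t_2^\kappa}^{t}P^1_{t-s}\big(|(\partial_u^2\overline\Phi)(s-t_2^\kappa,v_\varepsilon)|\,|\nabla v_\varepsilon|^2\big)(s,\cdot)\,\ud s\,.
\end{equs}
For $t$ in a fixed compact set and $\varepsilon$ small one has $t_1\ll t_2^\kappa\ll t_\star^\kappa+t$, so both terms are well defined; since $0\le|d_\varepsilon|\le A_\varepsilon+B_\varepsilon$, it suffices to show that $A_\varepsilon(t_\star^\kappa+t,\cdot L_\varepsilon)$ and $B_\varepsilon(t_\star^\kappa+t,\cdot L_\varepsilon)$ both converge to $0$ in probability in $\Cloc$. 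Fixing $N>d/(2-2\bar\alpha)-1$, the term $A_\varepsilon(t_\star^\kappa+t,\cdot L_\varepsilon)$ is precisely the process treated in Lemma~\ref{lem:S}, hence it vanishes.

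For $B_\varepsilon$ I would first use the deterministic bound $|\partial_u^2\overline\Phi(s,u)|\lesssim e^{2s}$ from \eqref{eqn:bounds-on-Phi}, so that the integrand is $\lesssim e^{2(s-t_2^\kappa)}|\nabla v_\varepsilon(s,\cdot)|^2$. Writing $\nabla v_\varepsilon(s,\cdot)=e^{-(s-t_2^\kappa)}\sum_{\tau\in\tree_3^N}\nabla P^1_{s-t_1}X^\tau_\varepsilon(t_1,\cdot)$, bounding each summand by \eqref{est:mPDXtau} — the tree $\bullet$ dominating the finite sum since $e^{t_1}\varepsilon^{1-\bar\alpha}\Gamma_{d/2}^{1/2}(t_1\varepsilon^{-2})\le1$ for small $\varepsilon$ — and passing from $L^2(\Omega)$ to $L^q(\Omega)$ by hypercontractivity, one gets $\sup_y\|\nabla v_\varepsilon(s,y)\|_{L^q(\Omega)}^2\lesssim(e^{t_2^\kappa}\varepsilon^{\frac d2-\alpha})^2(s\vee\varepsilon^2)^{-\frac d2-1}$ for $s\ge t_1$. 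Inserting this, using $e^{t_2^\kappa}\varepsilon^{\frac d2-\alpha}=(\log\varepsilon^{-1})^{-(\kappa+\frac12)}$ and $t_\star^\kappa-t_2^\kappa=(\tfrac d4+2\kappa+\tfrac12)\log\log\varepsilon^{-1}+\mf{c}$ together with $t_2^\kappa\sim T_\varepsilon$, and carrying out the $s$-integral (which is governed by its upper endpoint), one finds $\sup_y\|B_\varepsilon(t_\star^\kappa+t,y)\|_{L^q(\Omega)}\lesssim(\log\varepsilon^{-1})^{2\kappa-1}$, uniformly over $t$ in a compact set.

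It remains to upgrade this to convergence of $(t,x)\mapsto B_\varepsilon(t_\star^\kappa+t,xL_\varepsilon)$ in $\Cloc(\R\times\R^d)$; because the solution develops fronts on the scale $L_\varepsilon$, the families $u_\varepsilon(t_\star^\kappa+\cdot,\cdot L_\varepsilon)$ and $w^{N,\kappa}_\varepsilon(t_\star^\kappa+\cdot,\cdot L_\varepsilon)$ need not be tight in $\Cloc$ when $\kappa>0$, so one cannot argue via tightness of the two pieces separately and must control the difference, and its derivatives, directly. Following the scheme of Lemma~\ref{lem:Xtight}--Lemma~\ref{lem:Pg} in the space-time variables, one estimates the spatial and temporal derivatives of $B_\varepsilon(t_\star^\kappa+t,xL_\varepsilon)$: the spatial derivative equals $L_\varepsilon(\nabla B_\varepsilon)(t_\star^\kappa+t,xL_\varepsilon)$, and via $\|\nabla P^1_\tau f\|_{L^\infty}\lesssim\tau^{-1/2}e^\tau\|f\|_{L^\infty}$ (an integrable singularity) together with the bound on $\nabla v_\varepsilon$ above, it is of order $L_\varepsilon(\log\varepsilon^{-1})^{2\kappa-1}\sim(\log\varepsilon^{-1})^{2\kappa-\frac12}$; the time derivative, after an integration by parts in the Duhamel representation (whose boundary term vanishes since $\partial_u^2\overline\Phi(0,\cdot)=0$, so the surviving integral only involves $\partial_s|R|$, bounded through $|\partial_u^3\overline\Phi(t,u)|\lesssim e^{3t}$ and the moment bounds for higher derivatives of $v_\varepsilon$), is of a size that vanishes in the range $\kappa<\tfrac14$ and imposes no further restriction. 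Thus the binding constraint is $\kappa<\tfrac14$, coming exactly from the spatial rescaling factor $L_\varepsilon=\sqrt{T_\varepsilon}$; under it all the relevant moments vanish uniformly on compacts, so $B_\varepsilon(t_\star^\kappa+\cdot,\cdot L_\varepsilon)\to0$ in $\Cloc$ in probability and, together with the preceding step and $0\le|d_\varepsilon|\le A_\varepsilon+B_\varepsilon$, this proves the claim. The hard part is precisely this logarithmic bookkeeping: the remainder in the equation for $w^{N,\kappa}_\varepsilon$ is pointwise minuscule, but it must be transported through $P^1$ over the window $[t_2^\kappa,t_\star^\kappa]$ of length $\asymp\log\log\varepsilon^{-1}$, hence amplified by $e^{t_\star^\kappa-t_2^\kappa}\asymp(\log\varepsilon^{-1})^{d/4+2\kappa+1/2}$, and the extra gradient factor $L_\varepsilon$ — unavoidable since the individual processes are not tight in $\Cloc$ — is what pins down the admissible range of $\kappa$.
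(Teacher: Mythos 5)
Your proposal follows the paper's proof in all essentials: the same chain-rule identity exhibiting $w^{N,\kappa}_\varepsilon$ as a solution of the Allen--Cahn equation up to the remainder $-\partial_u^2\overline{\Phi}(t-t_2^\kappa,v_\varepsilon)\,|\nabla v_\varepsilon|^2$, the same comparison via Lemma~\ref{lem:abs.v} splitting $|u_\varepsilon-w^{N,\kappa}_\varepsilon|$ into an initial-data term (disposed of by Lemma~\ref{lem:S}) and a Duhamel term, the same use of \eqref{eqn:bounds-on-Phi} and Proposition~\ref{prop:mPXtau}, and the same bookkeeping giving $(\log\varepsilon^{-1})^{2\kappa-1}$ for the pointwise moments and an extra factor $L_\varepsilon$ from the spatial rescaling, whence the constraint $\kappa<\tfrac14$. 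The one place you deviate is the upgrade from moment bounds to convergence in $\Cloc$: the paper verifies the Kolmogorov/Morrey--Sobolev criterion \eqref{WTS:KK} with parabolic increments, estimating spatial increments through $\|\nabla p^1\|_{L^1}$ and time increments through $\int_{\sigma_1}^{\sigma_2}\|\partial_\sigma p^1_{t_\star^\kappa+\sigma-s}\|_{L^1}\ud\sigma$, which produces the needed $|\sigma_2-\sigma_1|^{1/2}$ without ever differentiating the integrand in time; you instead bound full space and time derivatives in the style of Lemma~\ref{lem:Xtight}. Your spatial-derivative estimate matches the paper's spatial-increment bound, but your time-derivative step is only asserted and is more demanding than necessary: after the integration by parts (whose boundary term indeed vanishes since $\partial_u^2\overline{\Phi}(0,\cdot)=0$) you must control $\partial_s\bigl(|\partial_u^2\overline{\Phi}(s-t_2^\kappa,v_\varepsilon)|\,|\nabla v_\varepsilon|^2\bigr)$, which requires not only a bound on $\partial_u^3\overline{\Phi}$ but also on $\partial_t\partial_u^2\overline{\Phi}$ and, since $\partial_s v_\varepsilon=\Delta v_\varepsilon$, moment bounds on second and third spatial derivatives of $P^1_{s-t_1}u^N_\varepsilon(t_1,\cdot)$, none of which appear in the paper (they do follow by the method of Proposition~\ref{prop:mPXtau} and impose no constraint beyond $\kappa<\tfrac14$, but as written this part is a claim rather than a proof). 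A H\"older-$\tfrac12$ estimate of the time increments, as in the paper, is all that the Morrey--Sobolev argument needs and sidesteps these extra estimates; apart from this under-justified step, your argument is correct and coincides with the paper's.
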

	\begin{proof}		
		From its definition, we see that $w^{N,\kappa}_\varepsilon$
satisfies for $ t > t_{2}^{\kappa}(\ve) $
		\begin{equation*}
			(\partial_t-
\Delta-1)w^{N,\kappa}_\varepsilon=-(w^{N,\kappa}_\varepsilon)^3+f^N_\varepsilon\,,
\quad
w^{N,\kappa}_\varepsilon(t_2^\kappa,\cdot)=P^1_{t_2^\kappa-t_1}u^N_\varepsilon(t_1,\cdot)\,,
		\end{equation*}
		where
		\begin{equation*}
			f^N_\varepsilon(t,x)=-\partial_u^2 \overline{\Phi}\(t-t_2^\kappa,e^{-(t-t_2^\kappa)}P^1_{t-t_1}u^N_\varepsilon(t_1,x)\)e^{-2(t-t_2^\kappa)} |\nabla P^1_{t-t_1}u^N_\varepsilon(t_1,x)|^2\,.
		\end{equation*}
		It follows that the difference $v^N_\varepsilon=w^{N,\kappa}_\varepsilon-u_\varepsilon$ satisfies the equation
		\begin{equation*}
			(\partial_t- \Delta-1)v^N_\varepsilon=-((v^N_\varepsilon)^2+3u_\varepsilon v^N_\varepsilon+3u_\varepsilon^2 )v^N_\varepsilon+f^N_\varepsilon\,\quad\forall t>t_2^\kappa
		\end{equation*}
		with initial condition at $t_2^\kappa$ given by $v^N_\varepsilon(t_2^\kappa,\cdot)=P^1_{t_2^\kappa-t_1}u^N_\varepsilon(t_1,\cdot)-u_\varepsilon(t_2^\kappa,\cdot)$.
		Applying Lemma \ref{lem:abs.v}, we see that
		\begin{align*}
			|u_\varepsilon-w^{N,\kappa}_\varepsilon|(t,x)
			&\le P^1_{t-t_2^\kappa}|P^1_{t_2^\kappa-t_1}u^N_\varepsilon(t_1,\cdot)-u_\varepsilon(t_2^\kappa,\cdot) |(x)
			\\&\quad+
\int_{t_2^\kappa}^t\int_{\R^d}p^1_{t-s}(x-y)|f^N_\varepsilon(s,y)| \ud y \ud s
		\end{align*}
		for every $(t,x)\in[t_2^\kappa,\infty)\times\R^d$. 
		By Lemma \ref{lem:S}, the process
$$(t,x) \mapsto \big(P^1_{t_\star^\kappa+t-t_2^\kappa}|P^1_{t_2^\kappa-t_1}u^N(t_1,\cdot)-u_\varepsilon(t_2^\kappa,\cdot)
|\big)(L_\varepsilon x)$$ converges to 0 in probability
in $\Cloc$.
		We also note that in view of \eqref{eqn:bounds-on-Phi}, $|f^N_\varepsilon(t,x)|\lesssim |\nabla P^1_{t-t_1}u^N_\varepsilon(t_1,x)|^2$.
		Hence it remains to show that the process 
		\begin{equation*}
		 	H_\varepsilon(t,x) = \int_{t_2^\kappa}^{t_\star^\kappa+t}\int_{\R^d}p^1_{t_\star^\kappa+t-s}(L_\varepsilon
x-y)|\nabla P^1_{s-t_1}u^N_\varepsilon(t_1,y)|^2 \ud y \ud
s
		\end{equation*} 
		 converges to $0$ in probability in $\Cloc$.
		
		For every $z_1=(\sigma_1,x_1),z_1=(\sigma_2,x_2)$ in $\R\times\R^d$, define the distance $d(z_1,z_2)=|\sigma_1- \sigma_2|^{\frac12}+|z_1-z_2|$. Let $K$ be a compact set in $\R^{d+1}$ and $q=d+2$. It suffices to show that
		\begin{equation}\label{WTS:KK}
			\lim_{\varepsilon\to0}\sup_{z\in K}\E|H_\varepsilon(z)|^q+
			\E\int_{K\times
K}\frac{|H_\varepsilon(z_1)-H_\varepsilon(z_2) |^q }{d(z_1,z_2)^q } \ud z_1 \ud z_2=0\,.
		\end{equation}
		In fact, by the Morrey--Sobolev inequality, the above estimate implies that  $\sup_{z\in K}|H_\varepsilon(z)|$ has vanishing $q$-th moment as $\varepsilon\to0$, which in turn, implies the convergence of $H_\varepsilon$ to $0$ in $C_\loc(\R\times\R^d)$ in probability. 
		From Proposition \ref{prop:mPXtau}, we have
		\begin{equation*}
			\|\nabla P^1_{s-t_1}u^N_\varepsilon(t_1,y)\|_q\lesssim s^{-\frac d4-\frac12}e^s \varepsilon^{\frac d2- \alpha }\,,
		\end{equation*}
		which implies that
		\begin{align}
			e^{t_\star^\kappa-s} \sup_{y\in\R^d}\|\nabla P^1_{s-t_1}u^N_\varepsilon(t_1,y)\|_q^2
			&\lesssim \((t_2^\kappa)^{-\frac d4-\frac12}e^{t_\star^\kappa}\varepsilon^{\frac d2- \alpha} \)^2
			\nonumber\\&\lesssim\(\log \varepsilon^{-1}\)^{2 \kappa-1}\(\frac{\log \varepsilon^{-1}}{t_2^\kappa} \)^{\frac d2+1} \,,
			\label{tmp:stt}
		\end{align}
		for every $s\in(t_2^\kappa,t_\star^\kappa+\dist(0,K))$.
		It follows that for every $z=(\sigma,x)$ in $K$ 
		\begin{align*}
			\|H_\varepsilon(\sigma,x)\|_q
			&\lesssim
\int_{t_2^\kappa}^{t_\star^\kappa+\sigma}e^{t_\star^\kappa-s} \sup_{y\in\R^d}\|\nabla
P^1_{s-t_1}u^N_\varepsilon(t_1,y)\|_{2q}^2 \ud s
			\\&\lesssim \(\log \varepsilon^{-1}\)^{2 \kappa-1}\(\frac{\log \varepsilon^{-1}}{t_2^\kappa} \)^{\frac d2+1} \(t_\star^\kappa-t_2^\kappa+\dist(0,K)\)\,.
		\end{align*}
		From the definitions of $t_2$ and $t_\star$, the right-hand side above vanishes as $\varepsilon\to0$.

		 For every $z_1=(\sigma_1,x_1),z_2=(\sigma_2,x_2)\in K$, we now estimate the increment $H_\varepsilon(z_2)-H_\varepsilon(z_1) $. The increment in the spatial variables $\|H_\varepsilon(\sigma_1,x_1)-H_\varepsilon(\sigma_1,x_2)\|_q$ can be estimated uniformly by a constant multiple of
		\begin{align*}
			|x_1-x_2|\int_{t_2}^{t_\star+\sigma_1}\int_{\R^d}L_\varepsilon|\nabla
p^1_{t_\star+\sigma_1-s}(y)| \ud y \bigg( \sup_{y} \|\nabla
P^1_{s-t_1}u^N_\varepsilon(t_1,y)\|_{2q}^2\bigg) \ud s\,.
		\end{align*}
		Taking into account the fact that $L_\varepsilon\|\nabla p^1_{t_\star+\sigma_1-s}\|_{L^1(\R^d)}\lesssim L_\varepsilon (t_\star+\sigma_1-s)^{-\frac12} e^{t_\star-s} $ uniformly for every $z_1\in K$, this gives the estimate
		\begin{multline*}
			\|H_\varepsilon(\sigma_1,x_1)-H_\varepsilon(\sigma_1,x_2)\|_q
			\\\lesssim
|x_1-x_2|\int_{t_2}^{t_\star+\sigma_1}L_\varepsilon(t_\star+ \sigma_1-s)^{-\frac12}
\sup_{y} e^{t_\star-s} \|\nabla P^1_{s-t_1}u^N_\varepsilon(t_1,y)\|_{2q}^2 \ud s\,.
		\end{multline*}
		Simplifying the integration on the right-hand side, using the estimate \eqref{tmp:stt}, we obtain
		\begin{multline*}
			\|H_\varepsilon(\sigma_1,x_1)-H_\varepsilon(\sigma_1,x_2)\|_q
			\\\lesssim |x_1-x_2|\(\log \varepsilon^{-1}\)^{2 \kappa-\frac12}\(\frac{\log \varepsilon^{-1}}{t_2^\kappa} \)^{\frac d2+1} \(t_\star^\kappa-t_2^\kappa+\sigma\)^{\frac12}\,.
		\end{multline*}
		To estimate the increment in the time variables, we assume without lost of generality that $\sigma_1<\sigma_2$ and write
		\begin{align*}
			&H(\sigma_2,x_2)-H(\sigma_1,x_2)
			\\&=\int_{t_\star^\kappa+\sigma_1}^{t_\star^\kappa+\sigma_2}\int_{\R^d}p^1_{t_\star^\kappa+\sigma_2-s}(L_\varepsilon
x-y)|\nabla P^1_{s-t_1}u^N_\varepsilon(t_1,y)|^2 \ud y \ud s
			\\&\quad+\int_{t_2^\kappa}^{t_\star^\kappa+\sigma_1}\int_{\R^d}\(p^1_{t_\star^\kappa+\sigma_2-s}-p^1_{t_\star^\kappa+\sigma_1-s}
\)(L_\varepsilon x-y)|\nabla P^1_{s-t_1}u^N_\varepsilon(t_1,y)|^2 \ud y \ud s
			\\&=:I_1+I_2\,.
		\end{align*}
		The first term is estimated easily
		\begin{align*}
			\|I_1\|_q&\lesssim
\int_{t_\star^\kappa+\sigma_1}^{t_\star^\kappa+\sigma_2}e^{t_\star^\kappa+\sigma_2-s}
\sup_y \|\nabla P^1_{s-t_1}u^N_\varepsilon(t_1,y)\|_{2q}^2 \ud s
			\\&\lesssim |\sigma_2- \sigma_1|\(\log \varepsilon^{-1}\)^{2 \kappa-1}\(\frac{\log \varepsilon^{-1}}{t_2^\kappa} \)^{\frac d2+1}\,.
		\end{align*}
		For the second term, we use the elementary estimates
		\begin{align*}
			|p^1_{t_\star^\kappa+\sigma_2-s}(y)-p^1_{t_\star^\kappa+\sigma_1-s}(y)|
			\le\int_{\sigma_1}^{\sigma_2}|\partial_\sigma
p^1_{t_\star^\kappa+\sigma-s}(y)| \ud \sigma
		\end{align*}
		and 
		\begin{align*}
			\|\partial_\sigma p^1_{t_\star^\kappa+\sigma-s}\|_{L^1(\R^d)}
			\lesssim e^{t_\star^\kappa+\sigma-s}\(1+(t_\star^\kappa+\sigma-s)^{-1} \)
		\end{align*}
		to obtain
		\begin{align*}
			\|I_2\|_q\lesssim
\int_{t_2^\kappa}^{t_\star^\kappa+\sigma_1}\int_{\sigma_1}^{\sigma_2}\(1+(t_\star^\kappa+\sigma-s)^{-1}
\)d \sigma e^{t_\star^\kappa+\sigma_2-s}\sup_{y} \|\nabla
P^1_{s-t_1}u^N_\varepsilon(t_1,y)\|_{2q}^2 \ud s\,.
		\end{align*}
		Using the estimate \eqref{tmp:stt}, it is straightforward to verify that
		\begin{align*}
			\|I_2\|_q
			\lesssim |\sigma_2- \sigma_1|^{\frac12}\(\log \varepsilon^{-1}\)^{2 \kappa-1}\(\frac{\log \varepsilon^{-1}}{t_2^\kappa} \)^{\frac d2+1}(t_\star^\kappa-t_2^\kappa+\dist(0,K))^{\frac12}\,.
		\end{align*}
		Combining these estimates yields
		\begin{multline*}
			\E\int_{K\times
K}\frac{|H_\varepsilon(z_1)-H_\varepsilon(z_2) |^q }{d(z_1,z_2)^q } \ud z_1 \ud z_2
			\\\lesssim \(\log \varepsilon^{-1}\)^{2 \kappa-\frac12}\(\frac{\log \varepsilon^{-1}}{t_2^\kappa} \)^{\frac d2+1} \(t_\star^\kappa-t_2^\kappa+\dist(0,K)\)^{\frac12}\,.
		\end{multline*}
		Since $\kappa<\frac14$, the right-hand side above vanishes as $\varepsilon\to0$, 
		which implies \eqref{WTS:KK} and completes the proof.
	\end{proof}


\section{Front propagation}\label{sec:convergence-mcf}

In this section we prove Proposition~\ref{prop:front-formation}
regarding the formation of the initial front and Theorem~\ref{thm:mcf} regarding its
evolution via mean curvature flow. We start by recalling an \textit{a-priori}
bound on solutions to the Allen--Cahn equation.
\begin{lemma}\label{lem:cmng-dwn-inf}
For every $ u_{0} \in C_{\mathrm{loc}}(\RR^{d}) $ satisfying $ \sup_{x \in
\RR^{d}} | u_{0}(x) e^{- |x|} | < \infty$, let $ u $ be the solution to
the Allen--Cahn equation \eqref{eqn:uep} with initial condition $
u_{0} $. Then
\begin{equs}
| u(t) | \leqslant \frac{e^{t}}{\sqrt{ e^{2 t} - 1}}, \qquad \forall t > 0 \;.
\end{equs}
\end{lemma}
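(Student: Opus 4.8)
The plan is to compare $u$ with the spatially homogeneous supersolution obtained from the maximal solution of the associated ODE. Set $m(t)=e^t/\sqrt{e^{2t}-1}$; a direct computation — or the observation that $m(t)=\lim_{u\to+\infty}\overline{\Phi}(t,u)$ for $\overline{\Phi}$ as in \eqref{eqn:Phi-bar} — shows that $m$ solves $\dot m=m-m^3$ on $(0,\infty)$, that $m(t)\to+\infty$ as $t\downarrow0$, and that $m(t)^2=1+(e^{2t}-1)^{-1}>1$ for every $t>0$. Viewed as a function constant in space, $m$ is thus an exact solution of the Allen--Cahn equation on $(0,\infty)\times\R^d$. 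Since $-u$ solves the same equation with initial datum $-u_0$ (which obeys the same growth bound), it suffices to prove the one-sided estimate $u(t,x)\le m(t)$.

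First I would record an a priori growth bound: applying Lemma~\ref{lem:abs.v} to $v=u$ with $G=u^2\ge0$, $R=0$, and letting the initial time tend to $0$, one gets $|u(t,x)|\le P^1_t|u_0|(x)$, so that $\sup_x|u_0(x)e^{-|x|}|<\infty$ furnishes, for each $T>0$, a constant $C$ with $|u(t,x)|\le Ce^{|x|}$ on $(0,T]\times\R^d$. Next, fix $T>0$ and put $w=u-m$ on $(0,T]\times\R^d$; it solves $(\partial_t-\Delta)w=c\,w$ with $c=1-u^2-um-m^2$. On the open set $\{w>0\}$ one has $u>m>1$, hence $u^2+um+m^2>3m^2>3$ and so $c<-2$ there; by Kato's inequality this gives $(\partial_t-\Delta+2)w^+\le0$ in the distributional sense on $(0,T]\times\R^d$. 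Moreover $0\le w^+\le u^+\le|u|\le Ce^{|x|}$ uniformly in $t\in(0,T]$, while $w(t,x)=u(t,x)-m(t)\to u_0(x)-\infty=-\infty$ as $t\downarrow0$ for each fixed $x$, so $w^+(t,\cdot)\to0$ pointwise. The differential inequality together with this exponential growth control yields the mild bound $w^+(t,x)\le e^{-2(t-s)}P_{t-s}\big(w^+(s,\cdot)\big)(x)$ for $0<s<t\le T$ (comparison principle for $\partial_t v=\Delta v-2v$ in the class of functions of at most exponential spatial growth); letting $s\downarrow0$ and using dominated convergence — $w^+(s,\cdot)\to0$ pointwise, dominated by $Ce^{|\cdot|}$, which is integrable against $p_{t-s}$ — the right-hand side vanishes, so $w^+\equiv0$, i.e.\ $u\le m$. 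Since $T$ was arbitrary this holds on $(0,\infty)\times\R^d$, and combined with the bound for $-u$ we conclude $|u(t,x)|\le e^t/\sqrt{e^{2t}-1}$.

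The main obstacle is precisely the interaction between the blow-up of $m$ as $t\downarrow0$ and the fact that $u$ is only controlled up to exponential growth in $x$: one cannot simply invoke a maximum principle at the initial time. What makes the argument close is that on $\{w>0\}$ the coefficient $c$ is bounded above by a strictly negative constant, which turns the evolution of $w^+$ into a genuinely contracting one; this, the uniform exponential growth bound on $w^+$ (which simultaneously licenses the comparison principle on $\R^d$ and supplies the dominating function), and the pointwise vanishing of $w^+$ at $t=0$ are exactly what is needed to exclude an initial boundary layer.
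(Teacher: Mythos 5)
Your proof is correct, and its core is the same as the paper's: both arguments compare $u$ with the spatially homogeneous solution $\Xi(t)=e^{t}/\sqrt{e^{2t}-1}=\lim_{u\to\infty}\overline{\Phi}(t,u)$, which blows up as $t\downarrow 0$, and the only real issue in either case is how to justify the comparison across the initial-time singularity for an unbounded initial datum. Where you differ is in that justification: the paper approximates $u_{0}$ by uniformly bounded initial conditions (for which comparison with $\Xi$ can be initialized at any small positive time, since $\Xi$ eventually dominates any bounded solution) and then removes the approximation using the growth hypothesis; you instead work directly with the possibly unbounded solution, extracting the a priori bound $|u(t,x)|\le P^{1}_{t}|u_{0}|(x)\lesssim e^{|x|}$ from Lemma~\ref{lem:abs.v}, observing that the zeroth-order coefficient of $w=u-\Xi$ is $\le -2$ on $\{w>0\}$ (since $\Xi>1$), and closing via Kato's inequality, a Phragm\'en--Lindel\"of-type comparison for $\partial_t v=\Delta v-2v$ in the exponential growth class, and dominated convergence as $s\downarrow 0$. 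Your route is more explicit about why no boundary layer can form at $t=0$, at the cost of invoking Kato's inequality and a comparison principle on all of $\RR^{d}$; both versions rely, at the same level of rigour as the paper, on well-posedness and locally uniform exponential growth control of the solution in the class determined by the hypothesis on $u_{0}$, which is what legitimises the passage $a\downarrow 0$ in your a priori bound.
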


\begin{proof}
We observe that with the definition of $ \overline{\Phi} $ as in
\eqref{eqn:Phi-bar} we have $ \Xi(t) = e^{t}/ \sqrt{e^{2t} -1} = \lim_{u \to
\infty} \overline{\Phi} (t, u)$, so that $ \Xi(t) $ is a space-independent solution to the
Allen--Cahn equation on $ (0, \infty) $. By comparison, since $ \lim_{t \to 0}
\Xi(t) = \infty $, and approximating $ u_{0} $ with uniformly bounded initial
conditions, we obtain $ u(t) \leqslant \Xi (t)  $ for all $ t > 0 $.
Similarly one derives the upper bound. The growth condition on $ u_{0} $ is
used to justify the approximation procedure, as well as the well-posedness of
the heat flow started in $ u_{0} $.
\end{proof}
Since our results are concerned with 
convergence in law of the process $ u_{\ve} $, the choice of
underlying probability space is not relevant. In the next lemma we build a
probability space on which our family of processes converges in probability. We
note that the Skorokhod representation theorem is usually stated for discrete
families of random variables, rather than continuous ones hence, for
completeness, we include a proof of our statement.

\begin{lemma}\label{lem:prob-space}
There exists a probability space $ (\Omega, \mF, \PP) $ supporting a sequence of processes $ \{u_{\ve}(t,x)  \ \colon \  (t,x) \in
[t_{\star}(\ve), \infty) \times \RR^{d} \} $ with the same law as the solutions to
\eqref{eqn:uep}, and a Gaussian process $ \{ \Psi(x)  \ \colon \ x \in
\RR^{d} \} $ satisfying \eqref{eqn:def-Psi}, 
such that $ \{ u_{\ve}(t_{\star}(\ve), x L_{\ve})  \ \colon \
x \in \RR^{d} \} $ converges, as $ \ve \to 0 $, to $ \{ \Phi(0, \Psi (x))  \
\colon \ x \in \RR^{d} \} $ in probability in $ C_{\mathrm{loc}}(\RR^{d})$.
\end{lemma}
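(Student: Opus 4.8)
The plan is to deduce the lemma from Theorem~\ref{thm:convergvence-Gaussian}(2) via a continuous-parameter version of the Skorokhod representation theorem. Restricting that statement to the time range $t\ge 0$ (the restriction map $C_{\mathrm{loc}}(\RR\times\RR^d)\to C_{\mathrm{loc}}([0,\infty)\times\RR^d)$ being continuous, hence pushing weak convergence forward), the rescaled processes
\[
\check u_\ve(s,x)\eqdef u_\ve(t_\star(\ve)+s,L_\ve x),\qquad (s,x)\in[0,\infty)\times\RR^d,
\]
converge in law as $\ve\to0$ in the Polish space $\mZ\eqdef C_{\mathrm{loc}}([0,\infty)\times\RR^d)$ to $W\eqdef\{\Phi(s,\Psi(x)):(s,x)\in[0,\infty)\times\RR^d\}$, which lies in $\mZ$ almost surely since $\Psi$ has a continuous version and $\Phi$ is smooth. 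Two observations make this the right object to work with: the law of $\check u_\ve$ is in bijection with the law of the restriction of $u_\ve$ to $[t_\star(\ve),\infty)\times\RR^d$ through the deterministic affine rescaling $(t,x)\mapsto(t-t_\star(\ve),x/L_\ve)$; and $\Psi$ is recovered pointwise from $W$ by $\Psi(x)=W(0,x)/\sqrt{1-W(0,x)^2}$, since $v\mapsto v/\sqrt{1-v^2}$ inverts $u\mapsto\Phi(0,u)=u/\sqrt{1+u^2}$ on $(-1,1)$ and $W(0,\cdot)$ stays in a compact subset of $(-1,1)$ on each bounded set, $W$-almost surely.

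Thanks to these observations it suffices to construct a probability space carrying $\tilde{\check u}_\ve\overset{d}{=}\check u_\ve$ for every $\ve\in(0,1)$ together with $\tilde W\overset{d}{=}W$ such that $\tilde{\check u}_\ve\to\tilde W$ almost surely in $\mZ$; one then defines $u_\ve(t,x)\eqdef\tilde{\check u}_\ve(t-t_\star(\ve),x/L_\ve)$ on $[t_\star(\ve),\infty)\times\RR^d$ and $\Psi(x)\eqdef\tilde W(0,x)/\sqrt{1-\tilde W(0,x)^2}$, so that $u_\ve(t_\star(\ve),\cdot\,L_\ve)=\tilde{\check u}_\ve(0,\cdot)$ converges a.s., hence in probability, to $\Phi(0,\Psi(\cdot))$ in $C_{\mathrm{loc}}(\RR^d)$. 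To build this space I would run the classical Skorokhod construction with partitions fixed \emph{once and for all} in terms of $\mu\eqdef\mathrm{Law}(W)$, so that no coupling choice has to be made measurably in the continuous parameter: fix a complete metric on $\mZ$ and a nested sequence of countable Borel partitions $\{A^m_j\}_j$, $m\ge1$, with $\sup_j\mathrm{diam}(A^m_j)\le2^{-m}$ and $\mu(\partial A^m_j)=0$ for all $m,j$ (possible by separability of $\mZ$ and since each point has Borel balls of $\mu$-null boundary for all but countably many radii); for $\nu\in\mathcal P(\mZ)$ let $F_\nu\colon[0,1]\to\mZ$ send $u$ to the unique point of $\bigcap_m\overline{A^m_{j_m(u)}}$, where the $j_m(u)$ are read off the nested subdivision of $[0,1]$ into intervals whose lengths are the conditional $\nu$-masses of the $A^m_j$; then $F_{\nu\,*}\mathrm{Leb}=\nu$. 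On $\Omega=[0,1]$ with Lebesgue measure and $U$ the identity map, put $\tilde W=F_\mu(U)$ and $\tilde{\check u}_\ve=F_{\mathrm{Law}(\check u_\ve)}(U)$.

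It remains to check $\tilde{\check u}_\ve\to\tilde W$ a.s. Since $\mathrm{Law}(\check u_\ve)\to\mu$ weakly and $\mu(\partial A^m_j)=0$, the portmanteau theorem gives $\mathrm{Law}(\check u_\ve)(A^m_j)\to\mu(A^m_j)$ for every $m,j$, so the subdivision endpoints defining $F_{\mathrm{Law}(\check u_\ve)}$ converge to those defining $F_\mu$; hence for every $u$ outside the countable set of limiting subdivision endpoints and every fixed $m$, the level-$m$ index $j_m(u)$ agrees for $\mathrm{Law}(\check u_\ve)$ and for $\mu$ once $\ve$ is small, placing $\tilde{\check u}_\ve(U)$ and $\tilde W(U)$ in a common set of diameter $\le2^{-m}$, and letting $m\to\infty$ gives the claim. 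I expect the genuine work to be exactly this continuous-parameter Skorokhod step: verifying that the fixed-partition quantile maps are well defined (the nested closed sets shrink to a single point, by completeness and the diameter bound) and that the almost-sure convergence $F_{\mathrm{Law}(\check u_\ve)}(U)\to F_\mu(U)$ genuinely holds along the continuum $\ve\to0$ and not merely along sequences; the reduction of the first paragraph, and the fact that convergence in probability — all that is used in the sequel — would follow even more cheaply from Strassen's theorem, are routine.
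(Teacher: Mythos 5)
Your argument is correct, but it takes a genuinely different route from the paper. The paper's proof is a \emph{concrete} coupling: it fixes a single white noise $\eta$, defines $X^{\bullet}_{\ve}(t_{\star}(\ve),\cdot L_{\ve})=K_{\ve}*\varphi^{\ve}*\eta$ with an explicit Gaussian kernel $K_{\ve}$, observes that $K_{\ve}\to K$ so that the linear fields converge almost surely to $\Psi$ (built from the same $\eta$), reconstructs initial data $\eta_\ve$ with the correct law, and then reruns the quantitative estimates behind Theorem~\ref{thm:convergvence-Gaussian} to transfer the convergence to the nonlinear solution in probability. You instead take the statement of Theorem~\ref{thm:convergvence-Gaussian}(2) as a black box and supply the genuinely missing ingredient the paper also flags, namely a Skorokhod representation along the continuum $\ve\to 0$; your fixed-partition quantile construction (partitions with $\mu$-null boundaries chosen once and for all from the limit law, maps $F_{\nu}$ pushing Lebesgue measure to $\nu$, index agreement at each fixed level via portmanteau) is sound, and since the slice map $w\mapsto w(0,\cdot)$ and the inversion $v\mapsto v/\sqrt{1-v^{2}}$ recover $\Psi$ with the right law, the stated conclusion follows with almost sure (hence in probability) convergence. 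This is not circular, as Theorem~\ref{thm:convergvence-Gaussian} does not rely on Lemma~\ref{lem:prob-space}. What each approach buys: the paper's coupling is explicit, makes the $u_\ve$ genuine solutions on the new space by construction, and identifies $\Psi$ as a functional of the driving noise; yours is softer and fully generic, at the cost of proving the continuum-parameter Skorokhod step by hand. Two small points you should make explicit: (i) fix a convention (e.g.\ half-open subdivision intervals) so that $F_{\mathrm{Law}(\check u_\ve)}$ is defined at every $u$, otherwise the exceptional null sets of the uncountably many laws could interfere with the pointwise limit in $\ve$; (ii) since the level-$m$ partitions are countably infinite, convergence of the subdivision endpoints needs termwise convergence of cell masses \emph{plus} conservation of total mass (giving $\ell^{1}$ convergence), not just portmanteau cell by cell. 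Also note that the later sections apply comparison principles to $u_\ve$ on the coupled space, so one should remark that solving \eqref{eqn:uep} on $[t_\star(\ve),\infty)$ is a Borel, law-determined property of an element of $C_{\loc}$, so your equal-in-law processes are indeed almost surely solutions; with that remark your proof serves the same purpose as the paper's.
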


\begin{proof}
Let us consider a probability space $ (\Omega, \mF, \PP) $ supporting a white
noise $ \eta $ on $ \RR^{d} $. Then define, for $ \ve \in (0, 1) $, the random
fields $ x \mapsto X_{\ve}^{\bullet}(t_\star(\ve), x L_{\ve}) $ by
\begin{equs}
X_{\ve}^{\bullet}(t_{\star} (\ve), x L_{\ve}) & = K_{\ve} * \varphi^{\ve} * \eta, \\
K_{\ve}(x) & = e^{t_{\star}(\ve) + \mf{c}} \ve^{\frac{d}{2} - \alpha}
L_{\ve}^{\frac{d}{2}} (4 \pi t_{\star}(\ve) )^{- \frac{d}{2} } \exp \Big\{ -
\frac{| x |^{2} L_{\ve}^{2} }{4 t_{\star}(\ve)} \Big\} \;.
\end{equs}
Here $ \varphi^{\ve} $ is as in \eqref{e:defEtax}, $ \mf{c} $ as in \eqref{eqn:t-star} and we observe that in so far $ x \mapsto  X_{\ve}^{\bullet} (t_{\star}(\ve), x L_{\ve}) $ is a
time-independent Gaussian process constructed to have the same law as the
solution to \eqref{eqn:Xtrunk}
at time $ t_{\star}(\ve) $. From the definition of $ t_{\star}(\ve) $ we obtain
the convergence of $ K_{\ve}(x) \to K(x) $, with
\begin{equs}
K(x)  = \frac{(8 \pi)^{\frac{d}{4}
}}{(4 \pi)^{\frac{d}{2}}} \exp \Big( - \frac{| x |^{2}}{4} \Big) \;,
\end{equs}
from which we obtain that $ X_{\ve}^{\bullet} (t_{\star}(\ve), x L_{\ve}) \to
\Psi(x) $ uniformly over $ x $, almost surely (with $ \Psi $ having the
required covariance structure), as $ \ve \to 0 $. In addition, starting from
the process $ X_{\ve}^{\bullet} (t_{\star}(\ve), \cdot) $ we can construct
a sequence of noises $ \eta_{\ve} $ with the same law as (but not
identical to) the initial conditions $ \ve^{\frac{d}{2} - \alpha} \eta *
\varphi^{\ve} $ appearing in \eqref{eqn:uep}, and such that
$ X_{\ve}^{\bullet}(t_{\star}(\ve), x) = P^{1}_{t_{\star}(\ve)}
 \eta_{\ve} $.

Now we can follow step by step the proof of
Theorem~\ref{thm:convergvence-Gaussian}, to find that if we consider $
u_{\ve} $ the solution to \eqref{eqn:uep} with the initial condition $
\eta_{\ve} $ we just constructed, then $ u_{\ve}(t_{\star}(\ve), x
L_{\ve}) \to \Phi(0, \Psi(x)) $ in probability in $
C_{\mathrm{loc}}(\RR^{d}) $ as $ \ve \to 0 $.
\end{proof}
The next lemma establishes the formation of
the fronts by time $ t_{\star}^{\kappa}(\ve) $ for some $ \kappa \in(0,
\frac{1}{2}]$. We write
$C^{k}_{b}(\RR^{d}; \RR)$
for the space of $ k$ times differentiable functions with all derivatives continuous and
bounded. 
Recall further that \[ t_{\star}^{\kappa}(\ve) = t_{\star}(\ve) +
2 \kappa \log{ L_{\ve} } - 2 \kappa \log{(d/2 - \alpha)} \,. \] 
We the define the random nodal set
\begin{equs}
\Gamma_{1} = \{  x \in \RR^{d}  \ \colon \ \Psi(x) = 0\} \;,
\end{equs}
and recall the definition of $ K^{1}_{\delta} $ from \eqref{eqn:K-sets}.
The proof of the following lemma and of the subsequent proposition
follow roughly the approach of \cite[Theorem 4.1]{barles1998}.

\begin{lemma}\label{lem:front-formation-loglog}
Consider $ (\Omega, \mF, \PP) $ as in Lemma~\ref{lem:prob-space}. For any $ 0 < \kappa \leqslant \frac{1}{2} $ and any sequence
$ \{t(\ve)\}_{\ve \in (0, 1)} $ with $ t(\ve) \geqslant t_{\star}(\ve) $ such that
\begin{equ}
\limsup_{\ve \to 0} \big( t(\ve) - t^{\frac{1}{2}}_{\star}(\ve) \big)  \leqslant 0
\leqslant  \liminf_{\ve \to 0} \big( t(\ve) - t^{\kappa}_{\star}(\ve)\big) \; ,
\end{equ}
it holds that for all $ \delta, \zeta \in (0, 1) $ 
\begin{equs}
\lim_{\ve \to 0} \PP \( \| u_{\ve}(t(\ve), \cdot L_{\ve}) - \sgn (\Psi
(\cdot)) \|_{K_{\delta}^{1}} > \zeta\) = 0 \; .
\end{equs}
\end{lemma}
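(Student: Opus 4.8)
I would work throughout on the probability space of Lemma~\ref{lem:prob-space}, and split the evolution at the intermediate time $t_\star^{\kappa_0}(\ve)$ for
\begin{equ}
\kappa_0 := \tfrac12\bigl(\kappa\wedge\tfrac14\bigr)\in\bigl(0,\tfrac14\bigr),\qquad \kappa_0<\kappa .
\end{equ}
Writing $v_\ve(s,x):=u_\ve(t_\star^{\kappa_0}(\ve)+s,xL_\ve)$, one has $\partial_s v_\ve=L_\ve^{-2}\Delta v_\ve+v_\ve-v_\ve^3$ with $L_\ve^{-2}=((\tfrac d2-\alpha)\log\ve^{-1})^{-1}$, on the interval $s\in[0,S_\ve]$ where $S_\ve:=t(\ve)-t_\star^{\kappa_0}(\ve)$. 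The hypotheses translate into $(\kappa-\kappa_0)\log\log\ve^{-1}-o(1)\le S_\ve\le(\tfrac12-\kappa_0)\log\log\ve^{-1}+o(1)$, so that
\begin{equ}
S_\ve\to\infty\qquad\text{while}\qquad L_\ve^{-2}e^{2S_\ve}\lesssim(\log\ve^{-1})^{-2\kappa_0}\to0\;;
\end{equ}
these two facts are the whole point of the choice of $\kappa_0$ and of the upper threshold $\tfrac12$.

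\emph{Step A: reaching $\pm1$ at time $t_\star^{\kappa_0}(\ve)$.} I would first check that, on this probability space and in probability, $u_\ve(t_\star^{\kappa_0}(\ve),\cdot\,L_\ve)\to\sgn(\Psi(\cdot))$ uniformly on the compact set $K^1_{\delta/2}$. By Proposition~\ref{prop:d3} it suffices to prove this with $w^{N,\kappa_0}_\ve(t_\star^{\kappa_0}(\ve),\cdot\,L_\ve)$ in place of $u_\ve$, which is an explicit computation: using Lemma~\ref{lem:Pu1convg} together with the a.s.\ uniform convergence $X^\bullet_\ve(t_\star(\ve),\cdot\,L_\ve)\to\Psi$ established in the proof of Lemma~\ref{lem:prob-space} (and that the extra heat smoothing over the original time $\kappa_0\log\log\ve^{-1}$ corresponds to a vanishing rescaled time), one gets $e^{-\kappa_0\log\log\ve^{-1}}P^1_{t_\star^{\kappa_0}(\ve)-t_1(\ve)}u^N_\ve(t_1(\ve),xL_\ve)=\Psi(x)+o(1)$ locally uniformly; plugging into \eqref{eqn:Phi-bar} and using $t_\star^{\kappa_0}(\ve)-t_2^{\kappa_0}(\ve)=(\tfrac d4+2\kappa_0+\tfrac12)\log\log\ve^{-1}+\mathfrak c$, the argument of $\overline\Phi$ is of order $(\log\ve^{-1})^{-\frac d4-\kappa_0-\frac12}\Psi(x)$ but is multiplied back up by $e^{t_\star^{\kappa_0}-t_2^{\kappa_0}}$ to size $(\log\ve^{-1})^{\kappa_0}\Psi(x)$, so $w^{N,\kappa_0}_\ve(t_\star^{\kappa_0}(\ve),xL_\ve)\to\sgn(\Psi(x))$ at every $x$ with $\Psi(x)\neq0$, uniformly on $K^1_{\delta/2}$ since there $|\Psi|$ is bounded below. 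Consequently there is an event $\mathcal A_\ve$ with $\PP(\mathcal A_\ve)\to1$ on which $v_\ve(0,\cdot)\ge1-\zeta/4$ on $K^{1,+}_{\delta/2}:=K^1_{\delta/2}\cap\{\Psi>0\}$ and $v_\ve(0,\cdot)\le-1+\zeta/4$ on $K^{1,-}_{\delta/2}$; moreover, by Lemma~\ref{lem:cmng-dwn-inf}, $|v_\ve(s,\cdot)|\le e^{t_\star^{\kappa_0}(\ve)+s}/\sqrt{e^{2(t_\star^{\kappa_0}(\ve)+s)}-1}\le1+o(1)$ everywhere, deterministically.

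\emph{Step B: the barrier.} Fix $x_0\in K^1_\delta\cap\{\Psi>0\}$. Since $\overline{B_{\delta/2}(x_0)}\subseteq K^{1,+}_{\delta/2}$ ($\Psi$ has no zero there and $\Psi(x_0)>0$), on $\mathcal A_\ve$ one has $v_\ve(0,\cdot)\ge1-\zeta/4$ on $\overline{B_{3\delta/8}(x_0)}$ and $v_\ve(0,\cdot)\ge-2$ everywhere. Take a fixed smooth profile $\chi$ with $\chi=1-\zeta/4$ on $B_{\delta/4}(x_0)$, $\chi=-2$ outside $B_{3\delta/8}(x_0)$, $\chi\in[-2,1-\zeta/4]$, so $\|\nabla\chi\|_\infty+\|\Delta\chi\|_\infty\lesssim\delta^{-2}$, and set $\underline v(s,x):=\overline\Phi(s,\chi(x))-\psi_\ve(s)$ with $\psi_\ve(s):=C_\delta L_\ve^{-2}(e^{2s}-1)$ for a suitable $C_\delta\sim\delta^{-2}$. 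Using $\partial_s\overline\Phi=\overline\Phi-\overline\Phi^3$, the bounds $|\partial_u\overline\Phi(s,\cdot)|\le e^{s}$ and $|\partial_u^2\overline\Phi(s,\cdot)|\lesssim e^{2s}$ from \eqref{eqn:bounds-on-Phi}, and $\psi_\ve\ge0$, a direct expansion gives $(\partial_s-L_\ve^{-2}\Delta-1)\underline v+\underline v^3\le-\psi_\ve'+C_\delta L_\ve^{-2}e^{2s}+\psi_\ve+6\psi_\ve^2\le C_\delta L_\ve^{-2}e^{2s}\bigl(-1+O(L_\ve^{-2}e^{2S_\ve})\bigr)\le0$ for $\ve$ small, so $\underline v$ is a subsolution on $(0,S_\ve)\times B_{\delta/2}(x_0)$. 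On the parabolic boundary: $\underline v(0,\cdot)=\chi\le v_\ve(0,\cdot)$ on $B_{\delta/2}(x_0)$ (checking the three regions of $\chi$), and on $\partial B_{\delta/2}(x_0)$, $\underline v(s,\cdot)\le\overline\Phi(s,-2)\le -e^{t_\star^{\kappa_0}(\ve)+s}/\sqrt{e^{2(t_\star^{\kappa_0}(\ve)+s)}-1}\le v_\ve(s,\cdot)$, the middle inequality because $\overline\Phi(s,-2)^2=4e^{2s}/(4e^{2s}-3)\ge1+3/(4e^{2S_\ve})$ while the square of the bound in Lemma~\ref{lem:cmng-dwn-inf} is $1+O(e^{-2t_\star^{\kappa_0}(\ve)})$ and $t_\star^{\kappa_0}(\ve)$ grows like a power of $\ve^{-1}$ whereas $e^{2S_\ve}$ is only polylogarithmic. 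The comparison principle then yields $u_\ve(t(\ve),x_0L_\ve)=v_\ve(S_\ve,x_0)\ge\overline\Phi(S_\ve,1-\zeta/4)-\psi_\ve(S_\ve)\ge1-\zeta$ for $\ve$ small, since $S_\ve\to\infty$ forces $\overline\Phi(S_\ve,1-\zeta/4)\to1$ and $\psi_\ve(S_\ve)\lesssim\delta^{-2}L_\ve^{-2}e^{2S_\ve}\to0$. The mirror-image barrier treats $K^1_\delta\cap\{\Psi<0\}$. All smallness conditions on $\ve$ are deterministic and depend only on $\delta,\zeta,\kappa$, so $\PP\bigl(\|u_\ve(t(\ve),\cdot\,L_\ve)-\sgn(\Psi(\cdot))\|_{K^1_\delta}>\zeta\bigr)\le\PP(\mathcal A_\ve^c)+o(1)\to0$.

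\emph{Main obstacle.} The delicate point is precisely that the barrier cannot be run directly from $t_\star(\ve)$ over a time of order $\log\log\ve^{-1}$: the correction $\psi_\ve$ needed to absorb $L_\ve^{-2}\,|\partial_u^2\overline\Phi|\,|\nabla\chi|^2$ would then accumulate to size $\sim L_\ve^{-2}e^{2s(\ve)}=O(1)$ instead of $o(1)$, because near the transition region of the cut-off $\chi$ one only has $|\partial_u^2\overline\Phi(s,\cdot)|\sim e^{2s}$. Advancing first to $t_\star^{\kappa_0}(\ve)$ by means of Proposition~\ref{prop:d3} simultaneously shortens the remaining barrier time to $S_\ve\le(\tfrac12-\kappa_0)\log\log\ve^{-1}$ (so that $L_\ve^{-2}e^{2S_\ve}\to0$ and the correction vanishes) and places the solution near $\pm1$ (so that $\overline\Phi(S_\ve,1-\zeta/4)$ can saturate to $1$); the constraint $\kappa_0<\tfrac14$ is what Proposition~\ref{prop:d3} requires, and $\kappa_0<\kappa$ is what forces $S_\ve\to\infty$. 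The two cases of the proof of Proposition~\ref{prop:front-formation} correspond exactly to whether $t(\ve)$ does ($\limsup(t(\ve)-t^{1/2}_\star(\ve))\le0$) or does not reach the regime in which this argument applies.
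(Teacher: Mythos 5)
Your proof is correct, but it takes a genuinely different route from the paper's. The paper never leaves time $t_\star(\ve)$: working almost surely along subsequences on the coupled space of Lemma~\ref{lem:prob-space}, it picks $\psi\in C^2_b$ lying below $u_\ve(t_\star(\ve),\cdot\,L_\ve)$ everywhere and positive near $x_0$, and shows that $v_\ve(t,x)=\overline{\Phi}\big(t,\psi(x)-Kt/L_\ve\big)$ is a subsolution of $\partial_t u=L_\ve^{-2}\Delta u+u(1-u^2)$ over the \emph{whole} window $[0,t(\ve)-t_\star(\ve)]$; the Laplacian and gradient errors are absorbed into the drift $-Kt/L_\ve$ of the argument of $\overline{\Phi}$, using $|\partial_u^2\overline{\Phi}|/|\partial_u\overline{\Phi}|\lesssim e^t\le CL_\ve$ (this is exactly where $\limsup(t(\ve)-t^{1/2}_\star(\ve))\le0$ enters), and the $\liminf$ hypothesis then gives $\overline{\Phi}(t(\ve)-t_\star(\ve),\cdot)\ge 1-\lambda L_\ve^{-2\kappa}$ on positive arguments. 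You instead first advance to $t_\star^{\kappa_0}(\ve)$ with $\kappa_0<\tfrac14$ via Proposition~\ref{prop:d3} and Lemma~\ref{lem:Pu1convg} -- precisely the alternative the paper itself flags after Proposition~\ref{prop:front-formation} for $t(\ve)=t_\star^\kappa(\ve)$, $\kappa<\tfrac14$ -- and then run a barrier $\overline{\Phi}(s,\chi(x))-\psi_\ve(s)$ with an \emph{additive} correction over the shortened window $S_\ve\le(\tfrac12-\kappa_0)\log\log\ve^{-1}$, which is what makes $L_\ve^{-2}e^{2S_\ve}\to0$. Your ``main obstacle'' diagnosis is accurate for this additive barrier, but note that the paper's argument-shift barrier does not suffer from it, which is why the paper avoids Proposition~\ref{prop:d3} here entirely; the price of the paper's route is tracking $\partial_u^2\overline{\Phi}/\partial_u\overline{\Phi}$, the price of yours is the extra weak-convergence input in Step~A (the heat-smoothing identification of $e^{-\kappa_0\log\log\ve^{-1}}X^\bullet_\ve(t_\star^{\kappa_0}(\ve),\cdot\,L_\ve)$ with $\Psi$ on the coupled space deserves a few lines, though it is of the same nature as Lemma~\ref{lem:Pu1convg} and as the computations in Lemma~\ref{lem:prob-space}). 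Two small repairs: use distinct constants for the $\chi$-derivative error and for the coefficient in $\psi_\ve$ (as written, with the same $C_\delta$ the cancellation only yields $\le 6\psi_\ve^2$, so take the latter, say, twice the former); and be aware that the paper's proof actually establishes the quantitative bound \eqref{eqn:prf-slowmcf-1}, i.e.\ $1-\lambda L_\ve^{-2\kappa}$, which is what is cited later as \eqref{eqn:slw-mcf-apr} -- your argument proves the lemma as stated, and would recover a comparable rate only after tuning $\kappa_0$ (e.g.\ $\kappa_0=\kappa/2$) and tracking $1-\overline{\Phi}(S_\ve,1-\zeta/4)\lesssim e^{-2S_\ve}$ instead of stopping at $1-\zeta$.
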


\begin{proof}
Suppose that along a subsequence $ \{ \ve_{n} \}_{n \in \NN} $, with $
\ve_{n} \in (0, 1), \lim_{n \to \infty} \ve_{n} =0
$, it holds that for
some $ \delta, \zeta , \zeta^{\prime}  \in (0, 1) $
$$ \lim_{n \to \infty} \PP (  \| u_{\ve_{n}}(t(\ve_{n}), \cdot L_{\ve_{n}}) - \sgn (\Psi
(\cdot)) \|_{K_{\delta}^{1}} > \zeta ) \geqslant \zeta^\prime.$$ 
By our choice of probability space, up to further refining the subsequence, we can assume that $
u_{\ve_{n}}(t_{\star} (\ve_{n}), \cdot L_{\ve_{n}}) \to \Phi(0, \Psi(\cdot)) $ almost
surely in $ C_{\mathrm{loc}}(\RR^{d}) $. We will then show that our
assumption is absurd, by proving that almost surely
\begin{equs}
\lim_{n \to \infty}   \| u_{\ve_{n}}(t(\ve_{n}), \cdot L_{\ve_{n}}) - \sgn (\Psi
(\cdot)) \|_{K_{\delta}^{1}} = 0 \; .
\end{equs}
In particular, it will suffice to show that for any $ x_{0} \in \Gamma_{1}^{c} $ there exist (random) $
\lambda(x_{0}), \varrho(x_{0}), \ve (x_{0}) > 0 $ such that for all $
\ve_{n} \in (0, \ve (x_{0})) $:
\begin{equ}[eqn:prf-slowmcf-1]
 1 - \frac{\lambda}{L_{\ve_{n}}^{2 \kappa}} \leqslant \sgn( \Psi (x)) \cdot
u_{\ve_{n}}(t(\ve_{n}), x
L_{\ve_{n}}) \leqslant 1 + \lambda \ve^{\frac{d}{2} - \alpha}_{n}, \quad \forall x \in B_{\delta}(x_{0}).
\end{equ}
For the sake of clarity, let us refrain from writing the subindex $ n $ and fix
$ x_{0} $ such that $ \Psi(x_{0}) > 0 $ (the opposite case follows analogously).
For the upper bound we use Lemma~\ref{lem:cmng-dwn-inf} to find for some $
\lambda > 0 $
\begin{equs}
| u_{\ve}(t(\ve), x L_{\ve}) | \leqslant \frac{e^{t(\ve)}}{\sqrt{e^{2
t(\ve)} -1}} \leqslant 1 + \lambda e^{t(\ve)} \leqslant 1 + \lambda
\ve^{\frac{d}{2} - \alpha}.
\end{equs}
To establish the lower bound, consider for any constant $ K >0 $ and any $ \psi \in C^{2}_{b}$ the
following function (here $ \overline{\Phi} $ is as in \eqref{eqn:Phi-bar}), 
\begin{equs}
v_{\ve}(t, x) = \overline{\Phi} \Big(t, \psi(x) - \frac{K}{L_{\ve}} t
\Big).
\end{equs}
We see that, since $ \partial_{u} \overline{\Phi} \geqslant 0 $
\begin{equs}
\partial_{t} v_{\ve} &= L^{-2}_{\ve}\Delta v_{\ve} + v_{\ve}(1 - v_{\ve}^{2}) -
K L^{-1}_{\ve} \partial_{u} \overline{\Phi} - L^{-2}_{\ve}\partial_{u} \overline{\Phi} \Delta \psi -
L^{-2}_{\ve} \partial_{u}^{2} \overline{\Phi} | \nabla \psi |^{2} \\
& \leqslant L^{-2}_{\ve}\Delta v_{\ve} + v_{\ve}(1 - v_{\ve}^{2}) -
\frac{\partial_{u}\overline{\Phi} }{L^{2}_{\ve}} \Big(
K L_{\ve} - |\Delta \psi| - \frac{| \partial_{u}^{2} \overline{\Phi} |}{|
\partial_{u} \overline{\Phi} |} | \nabla \psi |^{2}
\Big).
\end{equs}
Now we observe that for $ t > 0 $, similarly to~\eqref{eqn:bounds-on-Phi} distinguishing the
cases $ |u| \leqslant e^{- t} $ and $ |u| \geqslant e^{-
t}$, we can bound
\begin{equ}
\bigg\vert \frac{\partial_{u}^{2} \overline{\Phi}(t, u)}{\partial_{u} \overline{\Phi} (t,
u)} \bigg\vert \lesssim \bigg\vert u \frac{ e^{2 t} -1}{ 1 +
u^{2}(e^{2 t} -1)} \bigg\vert \lesssim e^{t}.
\end{equ}
Hence there exists
a $ K (\psi) > 0 $ such that $ v_{\ve} $ is a subsolution to $
\partial_{t} u = L_{\ve}^{-2} \Delta u + u(1 - u^{2}) $ on the time interval $
[0, t(\ve)- t_{\star}(\ve) ] $ with initial condition $ \psi $. Here we use that $
\limsup_{\ve \to 0} \{t(\ve)- t^{\frac{1}{2}}_{\star}(\ve) \}  \leqslant 0$, so that for
a constant $ C>0 $ independent of $ \ve $ we have $ \exp( t(\ve)-
t_{\star}(\ve)) \leqslant C
L_{\ve} $. In particular, by our assumptions and by the upper bound of
\eqref{eqn:prf-slowmcf-1}, we can choose $ \psi \in C^{2}_{b} $ so that $
\psi(x) >0 $ for $ x $ in a closed ball $ \overline{B}_{\varrho}(x_{0}) $ about $ x_{0} $ and such that for some $ \ve(\psi) >0 $
\begin{equs}
u_{\ve}(t_{\star}(\ve), x L_{\ve}) \geqslant \overline{\Phi}( 0, \psi(x) ) =
\psi(x), \qquad \forall x \in
\RR^{d}, \ \ve \in (0, \ve(\psi)).
\end{equs}
Now, using that $ u =1 $ is an exponentially stable fixed point for $ \overline{\Phi} $, we have that for
every $ u > 0 $ there exists a $ \lambda(u) $ that can be chosen locally
uniformly over $ u $, such that $$ \overline{\Phi}(t(\ve) -
t_{\star}(\ve), u) \geqslant \overline{\Phi}(
 2\kappa \log{L_{\ve}} - C, u) \geqslant 1 - \frac{\lambda}{L_{\ve}^{2 \kappa}} $$ for all $ \ve $
sufficiently small, and for $ C>0 $ such that $ t (\ve) \geqslant
t^{\kappa}_{\star}(\ve) - C$ for all $ \ve $. Then by comparison, using that $
 \frac{t (\ve) - t_{\star}(\ve)}{L_{\ve}} \lesssim \frac{\log{L_{\ve}}}{L_{\ve}} \to 0 $, for $ \ve $ sufficiently small:
\begin{equs}
u_{\ve}(t(\ve), x L_{\ve}) \geqslant \inf_{y \in B_{\varrho}(x_{0})}  v_{\ve}(
t(\ve) - t_{\star}(\ve), y) \geqslant
1 - \frac{\lambda}{L_{\ve}^{2 \kappa}}, \qquad \forall x \in
B_{\varrho}(x_{0}) \;.
\end{equs}
This completes the proof of \eqref{eqn:prf-slowmcf-1} and of the lemma.
\end{proof}
The following proposition treats slightly longer time scales.

\begin{proposition}\label{prop:slower-than-mcf}
Consider $ (\Omega, \mF, \PP) $ as in Lemma~\ref{lem:prob-space} and fix any sequence $ \{ t (\ve) \}_{\ve
\in (0,1)}  $, with $ t(\ve) \geqslant t_\star(\ve) $, and such that
\begin{equs}
\liminf_{\ve \to 0} \big(t(\ve) - t^{\frac{1}{2}}_{\star}(\ve)\big) > 0 \; , \qquad \lim_{\ve \to 0}
\frac{t(\ve) - t_{\star}(\ve)}{T_{\ve}} = 0 \;.
\end{equs}
Then for any $ \delta, \zeta \in (0, 1) $
\begin{equ}
\lim_{\ve \to 0} \PP \( \| u_{\ve}(t(\ve), \cdot L_{\ve}) -
\sgn(\Psi(\cdot)) \|_{K_{\delta}} > \zeta\) =0 \; .
\end{equ}
\end{proposition}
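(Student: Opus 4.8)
The plan is to route the argument through the intermediate time $t^{1/2}_{\star}(\ve)=t_{\star}(\ve)+\tfrac12\log\log\ve^{-1}$, at which Lemma~\ref{lem:front-formation-loglog} (applied to the constant sequence $t(\ve)\equiv t^{1/2}_{\star}(\ve)$, whose hypotheses hold with any $\kappa\in(0,\tfrac12)$) already shows that $u_{\ve}(t^{1/2}_{\star}(\ve),\cdot\,L_{\ve})$ is uniformly close to $\sgn(\Psi)$, and then to prove that this approximation persists over the remaining time $t(\ve)-t^{1/2}_{\star}(\ve)$, which by the hypotheses on $t(\ve)$ is bounded below by a positive constant and is $o(T_{\ve})$. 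As in the proof of Lemma~\ref{lem:front-formation-loglog} it suffices to work on the probability space of Lemma~\ref{lem:prob-space} and argue by contradiction: if the probability in the statement did not vanish along some $\ve_{n}\to0$, we pass to a subsequence along which $\|u_{\ve_{n}}(t^{1/2}_{\star}(\ve_{n}),\cdot\,L_{\ve_{n}})-\sgn(\Psi)\|_{K^{1}_{\delta}}\to0$ almost surely for every $\delta$ in a countable dense set, and then derive that the desired convergence holds on a full-measure event, a contradiction.

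Fix such an $\omega$ and $\delta,\zeta\in(0,1)$. The upper bound $u_{\ve_{n}}(t(\ve_{n}),\cdot\,L_{\ve_{n}})\leqslant1+o(1)$ and the symmetric bound $\geqslant-1-o(1)$ follow immediately from Lemma~\ref{lem:cmng-dwn-inf}, since $e^{t(\ve_{n})}/\sqrt{e^{2t(\ve_{n})}-1}\to1$. For the matching lower bound it is enough, after covering the compact set $K^{1}_{\delta}$ (which avoids $\Gamma_{1}$) by finitely many balls, to show that for every $x_{0}$ with, say, $\Psi(x_{0})>0$ there is $\varrho>0$ with $\Psi>0$ on $\overline{B_{\varrho}(x_{0})}$ and $\liminf_{n}\inf_{|x-x_{0}|\leqslant\varrho/4}u_{\ve_{n}}(t(\ve_{n}),x L_{\ve_{n}})\geqslant1$; the case $\Psi(x_{0})<0$ is symmetric.

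Write $t_{n}:=t^{1/2}_{\star}(\ve_{n})$, $\theta_{n}:=t(\ve_{n})-t_{n}$ (so $\liminf_{n}\theta_{n}>0$ and $\theta_{n}=o(T_{\ve_{n}})$, since $t^{1/2}_{\star}(\ve)-t_{\star}(\ve)=\tfrac12\log\log\ve^{-1}$), and let $v_{n}(t,x):=u_{\ve_{n}}(t_{n}+t,x L_{\ve_{n}})$, which solves the rescaled Allen--Cahn equation $\partial_{t}v=L_{\ve_{n}}^{-2}\Delta v+v-v^{3}$. By the previous paragraph, for $n$ large $v_{n}(0,\cdot)\geqslant1-\mu_{n}$ on $\overline{B_{\varrho}(x_{0})}$ with $\mu_{n}\to0$, while $v_{n}(t,\cdot)\geqslant-1-\lambda_{n}$ on $[0,\theta_{n}]$ with $\lambda_{n}\to0$ by Lemma~\ref{lem:cmng-dwn-inf}. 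On any space-time region where $v_{n}\geqslant1/2$, the function $1-v_{n}$ solves the linear heat equation with killing $\partial_{t}(1-v_{n})-L_{\ve_{n}}^{-2}\Delta(1-v_{n})+c_{n}(1-v_{n})=0$ with $c_{n}=v_{n}(1+v_{n})\geqslant3/4$. The core of the proof is the following. Whenever $v_{n}\geqslant1/2$ on $[0,T]\times B_{r}(x_{0})$ with $B_{r}(x_{0})\supseteq\overline{B_{\varrho/2}(x_{0})}$, comparison of $1-v_{n}$ with the supersolution $\mu_{n}e^{-3t/4}+\tfrac12\Gamma_{n}(t,x)$ --- where $\Gamma_{n}(t,x)$ is the probability that the diffusion with generator $L_{\ve_{n}}^{-2}\Delta$ started at $x$ leaves $B_{r}(x_{0})$ before time $t$ --- gives on $\overline{B_{\varrho/2}(x_{0})}$ that $(1-v_{n})(t,x)\leqslant\mu_{n}+Ce^{-c\varrho^{2}T_{\ve_{n}}/\theta_{n}}$ for all $t\leqslant T$, which tends to $0$ because $T_{\ve_{n}}/\theta_{n}\to\infty$; in other words, the influence on $x_{0}$ of the far-away region near $\Gamma_{1}$, where $v_{n}$ is not close to $1$, is superpolynomially small, since the diffusion length $\sqrt{L_{\ve_{n}}^{-2}\theta_{n}}=\sqrt{\theta_{n}/T_{\ve_{n}}}$ tends to $0$. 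A continuity argument in the spirit of \cite{barles1998} uses this self-consistently to show that the bad set $\{v_{n}\leqslant1/2\}$ never reaches $\overline{B_{\varrho/2}(x_{0})}$ during $[0,\theta_{n}]$, so that the estimate above is genuinely applicable with $T=\theta_{n}$ and yields $v_{n}(\theta_{n},\cdot)\to1$ uniformly on $\overline{B_{\varrho/4}(x_{0})}$, which is the claim.

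The main obstacle is precisely this persistence over a time as long as $o(T_{\ve})$. The barriers $\overline{\Phi}(t,\psi(x)-\tfrac{K}{L_{\ve}}t)$ of Lemma~\ref{lem:front-formation-loglog}, and more generally any global-in-space subsolution of the form $\overline{\Phi}(t,\psi-\mathrm{correction})$ with $\psi$ a profile interpolating between $\pm1$, are of no use here: near the transition of $\psi$ one has $|\partial_{u}^{2}\overline{\Phi}/\partial_{u}\overline{\Phi}|\lesssim e^{t}$, which forces the correction to grow at least like $L_{\ve}^{-2}e^{t}$ and hence to leave the small regime once $t\gtrsim\log\log\ve^{-1}$. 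One is therefore forced to pass first through $t^{1/2}_{\star}(\ve)$ --- where the interface has already formed --- and then to localise near $x_{0}$, exploiting the exponential stability of the equilibria $\pm1$ (so that away from the interface the equation for $1\mp v_{\ve}$ is effectively a heat equation with a uniformly positive killing rate) together with the vanishing of the diffusion length on the time scale $o(T_{\ve})$. Making the resulting continuity argument close --- that is, quarantining the interface away from $x_{0}$ so that the killing rate stays bounded below on the relevant balls throughout $[0,\theta_{n}]$ --- is the delicate technical point, carried out following \cite{barles1998}.
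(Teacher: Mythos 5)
Your overall skeleton matches the paper's: work on the coupling space of Lemma~\ref{lem:prob-space}, pass to an almost surely convergent subsequence, get the upper bound from Lemma~\ref{lem:cmng-dwn-inf}, use Lemma~\ref{lem:front-formation-loglog} to know that at time $t_{\star}^{1/2}(\ve)$ one has $u_{\ve}\geqslant 1-\mathcal{O}(L_{\ve}^{-1})$ on a ball $B_{\varrho}(x_{0})$ when $\Psi(x_{0})>0$, and then show persistence over the remaining time $\theta_{n}=t(\ve_{n})-t_{\star}^{1/2}(\ve_{n})=o(T_{\ve_{n}})$. But the core of that persistence step is not actually proved. Your key estimate (Feynman--Kac for $1-v_{n}$ with killing rate $c_{n}=v_{n}(1+v_{n})\geqslant 3/4$, plus the exit probability $\Gamma_{n}$) is \emph{conditional} on $v_{n}\geqslant 1/2$ holding on the whole cylinder $[0,T]\times B_{r}(x_{0})$, and its conclusion controls $1-v_{n}$ only on the strictly smaller ball $\overline{B}_{\varrho/2}(x_{0})$. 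The proposed continuation argument therefore does not close: defining $T^{*}$ as the first time the hypothesis fails, the touching point where $v_{n}=1/2$ may lie in the uncontrolled annulus $B_{r}\setminus B_{\varrho/2}$, and nothing prevents the ``bad'' set from creeping inward through that annulus on a time scale $\theta_{n}$ which may diverge (any $\theta_{n}=o(T_{\ve})$ is allowed, e.g.\ $\theta_{n}\sim T_{\ve}^{3/4}$). The point you flag at the end as ``the delicate technical point, carried out following \cite{barles1998}'' is precisely the content that is missing: what is needed is a bound on the \emph{speed of the interface itself}, and a linearisation around $+1$ cannot provide it, because near the level-$1/2$ set the linearisation is invalid and the motion is governed by the nonlinear (balanced, bistable) structure. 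Indeed, your estimates would hold verbatim for a slightly unbalanced nonlinearity (say $v-v^{3}-\delta$ with small $\delta>0$, for which the killing rate near $+1$ is still bounded below), yet for such an equation the front moves at speed of order one in the original variables and would sweep across $B_{\varrho}(x_{0})$ within a time $o(T_{\ve})$; so the conclusion genuinely requires an argument that uses the balanced structure, not just exponential stability of $\pm1$.

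This is exactly what the paper supplies and what your sketch omits: after rescaling time by $s(\ve)$ so that the equation becomes \eqref{eqn:prf-slow-mcf-ac} with diffusivity $\zeta(\ve)=s(\ve)/L_{\ve}^{2}\to0$, it constructs an explicit viscosity subsolution near the interface of the form $\mf{q}\big(L_{\ve}\,\overline{d}(\zeta(\ve)\sigma,x)-K_{1}\tfrac{s(\ve)}{L_{\ve}}\sigma\big)-\tfrac{\lambda}{L_{\ve}}$, with $\mf{q}=\tanh$ the standing wave and $\overline{d}$ a damped signed distance to the mean curvature flow of $B_{\varrho/2}(x_{0})$, glued to the spatially homogeneous solution $\overline{\Phi}(s(\ve)\sigma,-1-\overline{\lambda}\ve^{d/2-\alpha})$ outside and interpolated with the constant $1-\lambda/L_{\ve}$ inside. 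This subsolution genuinely tracks the front and shows its total displacement is $\mathcal{O}(\zeta(\ve))\to0$, after which comparison (together with your correct use of Lemma~\ref{lem:cmng-dwn-inf} for the upper bound) finishes the proof. Without such a front-tracking barrier (or an equivalent ingredient quantifying that the balanced front moves only at curvature speed $\mathcal{O}(L_{\ve}^{-2})$ per unit time), your argument has a genuine gap at its central step.
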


\begin{proof}
As in the proof of the previous lemma, it suffices to prove that for any
subsequence $ \{ \ve_{n} \}_{n \in \NN}$ with $ \ve_{n} \in (0, 1),
\lim_{n \to \infty} \ve_{n} = 0  $ for which almost surely
\begin{equs}
u_{\ve}(t_{\star}(\ve), \cdot L_{\ve}) \to \Phi(0, \Psi(\cdot)) \ \ \text{
in } \ C_{\mathrm{loc}}(\RR^{d}) \; ,
\end{equs}
it holds that for all $ \delta \in (0, 1) $
\begin{equs}
\lim_{n \to \infty} \| u_{\ve_{n}}(t(\ve_{n}), \cdot L_{\ve_{n}}) -
\sgn(\Psi(\cdot)) \|_{K_{\delta}} = 0 \;.
\end{equs}
Hence we will work with a fixed realization of all random variables and for the
sake of clarity we will refrain from writing the subindex $ n $. In addition,
since the case $ \Psi < 0 $ is identical to the case $ \Psi>0 $, let us choose
an $ x_{0} \in \RR^{d}$ such that $ \Psi(x_{0}) > 0 $ and define $ s(\ve) =
t(\ve) - t_{\star}^{\frac{1}{2}} (\ve) $. We can also assume that $
s(\ve) \geqslant 0 $ for all $ \ve $. Our aim is to prove the convergence $\lim_{\ve \to 0}
u_{\ve}(t_{\star}^{\frac{1}{2} }(\ve) + s(\ve),x L_{\ve} ) =1 $ holds true for all $
x $ in a ball $ B_{\varrho}(x_{0}) $ about $ x_{0} $ of radius $ \varrho>0 $.

By Lemma~\ref{lem:front-formation-loglog} we already know that
there exist $ \overline{\lambda} (x_{0}), \varrho(x_{0}) >0$ and $\ve(x_{0}) \in (0, 1) $ such that:
\begin{equs}[eqn:slw-mcf-apr]
u_{\ve}(t_{\star}^{\frac{1}{2}} (\ve), x L_{\ve}) & \geqslant 1 - \frac{
\overline{\lambda}}{L_{\ve}},
\qquad & \forall \ve \in (0, \ve( x_{0})), \ \ x \in B_{\varrho}(x_{0}), \\
u_{\ve}(t_{\star}^{\frac{1}{2}} (\ve), x L_{\ve}) & \geqslant -1 -
 \overline{\lambda}  \ve^{\frac{d}{2} - \alpha}, \qquad & \forall x \in \RR^{d}.
\end{equs}
Here the second bound is a consequence of Lemma~\ref{lem:cmng-dwn-inf}
(in fact in the second statement $ \overline{\lambda} $ can be chosen deterministic and
independent of $ x_{0} $).
Our aim is to show that this front does not move after an additional time $
s(\ve) $.
Let us define $ \widetilde{u}_{\ve}(\sigma, x) =
u_{\ve}(t_{\star}^{\frac{1}{2}}(\ve) + \sigma s(\ve) , x  L_{\ve} ) $, which solves
\begin{equs}[eqn:prf-slow-mcf-ac]
\partial_{\sigma} \widetilde{u}_{\ve} = \frac{s(\ve)}{L^{2}_{\ve}} \Delta
\widetilde{u}_{\ve} + s(\ve) \widetilde{u}_{\ve}(1 -
\widetilde{u}_{\ve}^{2}), \qquad \widetilde{u}_{\ve}(0, \cdot) =
\widetilde{u}_{\ve, 0}(\cdot) \;,
\end{equs}
with an initial condition $ \widetilde{u}_{\ve, 0}(x) =
u_{\ve}(t^{\frac{1}{2}}_{\star}(\ve), x L_{\ve}) . $ Our purpose is to construct
an explicit subsolution $ \underline{u}_{\ve} $ to \eqref{eqn:prf-slow-mcf-ac}
with initial condition ``close'' to $ 1_{B_{\frac{\varrho}{2}}(x_{0})} $,
such that $ \lim_{\ve \to 0} \underline{u}_{\ve}(1, x) = 1$ for all $ x
$ in a neighbourhood of $ x_{0} $. Our ansatz is that close to the interface the
subsolution is of the following form,
for $ \zeta(\ve) = \frac{s(\ve)}{L_{\ve}^{2}} $:
\begin{equs}
\mf{q} \Big( L_{\ve} \ d(\zeta(\ve) \
\sigma, x) + L_{\ve} \mathcal{O}( \zeta(\ve))\Big) - \mathcal{O}(L_{\ve}^{-1}).
\end{equs}
Here $ d(\sigma, x) $ is the signed distance function associated to the mean
curvature flow evolution at time $ \sigma \geqslant 0 $ of the ball $
B_{\frac{\varrho}{2}}(x_{0}) $ with the sign convention $ d(0, x)> 0
$ if $ x \in B_{\frac{\varrho}{2}} (x_{0}) $, and $ d(0, x) \leqslant 0 $ if $
 x \in B_{\frac{\varrho}{2}}^{c}(x_{0})$  and $
\mf{q}(u) = \tanh (u) $ is the traveling wave solution to the Allen--Cahn
equation:
\begin{equs}
\ddot{\mf{q}} + \mf{q}(1 - \mf{q}^{2}) = 0, \qquad \lim_{x \to \pm \infty}
\mf{q} = \pm 1 \;.
\end{equs}

Our first step is to construct precisely the subsolution near the interface. Define $ \overline{d}(\sigma, x) = e^{- \mu \sigma} d (\sigma, x) $, for some
$ \mu >0 $ that will be chosen later on. We observe that there are $
\sigma^{\prime}> 0, \varrho^{\prime} \in (0, \varrho/2) $ such that $ d(\sigma,x) $ is smooth in the
set
\begin{equs}
Q_{\varrho} = [0, \sigma^{\prime}] \times \{ x\,:\,| x - x_{0} | \in [\varrho/2 -
\varrho^{\prime} , \varrho/2 + \varrho^{\prime} ] \}
\end{equs}
and (see for example \cite[Equation (6.4), p.663]{evans1991}) there exists a
constant $ C>0 $, depending on $ \varrho, \varrho^{\prime},
\sigma^{\prime} $, such that
\begin{equs}
\partial_{\sigma} d - \Delta d \leqslant C d \quad \text{ on } \ \ Q_{\varrho}.
\end{equs}
In particular, fixing $ \mu \geqslant C $ we have $ (\partial_{\sigma} - \Delta)
\overline{d} \leqslant 0 $ on $ Q_{\varrho} $.  Then consider, for some $ K_{1} > 0 $
\begin{equs}
w_{\ve}(\sigma, x) = \mf{q} \Big( L_{\ve} \ \overline{d} (\zeta(\ve) \sigma,
x) - K_{1} \frac{s(\ve)}{L_{\ve}} \sigma \Big)- \frac{\lambda}{L_{\ve}}.
\end{equs}
We claim that for $K_{1} $ sufficiently large $ w_{\ve} $ is a subsolution to
\begin{equs}[eqn:prf-slowmcf-2]
\partial_{\sigma} w_{\ve} - \zeta(\ve) \Delta w_{\ve}& - s(\ve) w_{\ve}(1 -
w_{\ve}^{2}) \leqslant 0, \quad \text{ on } \ Q_{\varrho}^{\ve},
\end{equs}
where
\begin{equs}
Q_{\varrho}^{\ve} = [0, \zeta(\ve)^{-1} \sigma^{\prime}] \times \{x\,:\, | x - x_{0}
| \in [\varrho/2 -
\varrho^{\prime} , \varrho/2 + \varrho^{\prime} ] \}.
\end{equs}
In fact, since $ \dot{\mf{q}} \geqslant 0 $, we can compute
\begin{equs}
\partial_{\sigma} w_{\ve} & = L_{\ve} \dot{\mf{q}} \ \zeta(\ve) \ \partial_{\sigma}
\overline{d} - K_{1}\frac{s(\ve)}{L_{\ve}} \dot{\mf{q}}\\
& =  \zeta(\ve) \Delta w_{\ve} + L_{\ve}  \zeta(\ve) \ \dot{\mf{q}} 
(\partial_{\sigma} - \Delta )\overline{d} - \zeta(\ve) L^{2}_{\ve} \ddot{\mf{q}}
| \nabla d |^{2}  - K_{1} \frac{s(\ve)}{L_{\ve}} \dot{\mf{q}}\\
& \leqslant \zeta(\ve) \Delta w_{\ve} - s(\ve) \ddot{\mf{q}}  - K_{1} \frac{s(\ve)}{L_{\ve}} \dot{\mf{q}}
\end{equs}
where we used that $ | \nabla d |^{2} =1 $ in a neighborhood of the boundary of $
B_{\frac{\varrho}{2}}$. Now we use the definition of $ \mf{q} $ to rewrite the
last term as
\begin{equs}
\zeta(\ve) \Delta w_{\ve} & + s(\ve)\mf{q}(1 - \mf{q}^{2})  -
K_{1}\frac{s(\ve)}{L_{\ve}} \dot{\mf{q}}.
\end{equs}
At this point we would like to replace $ \mf{q} $ with $ w_{\ve} =
\mf{q} - \frac{\lambda}{L_{\ve}} $ in the nonlinearity.
We observe that since $ u \mapsto u(1-u^{2}) $ is decreasing near $ u=1 $ and
$ u = -1 $ there exists a $ \gamma \in (0, 1) $ such that if $ | \mf{q} | \in (
\gamma, 1) $ and $ \ve $ is sufficiently small, then $ \mf{q}(1 - \mf{q}^{2})
\leqslant w_{\ve}(1 - w_{\ve}^{2}) $. On the other hand, on the set $ |\mf{q}|
\leqslant \gamma $ there exists a constant $ c(\gamma) >0 $ such that $
\dot{\mf{q}} \geqslant c(\gamma)>0 $. Hence in this last case:
\begin{equs}
\zeta(\ve) \Delta w_{\ve} & + s(\ve)\mf{q}(1 - \mf{q}^{2})  -
K_{1}\frac{s(\ve)}{L_{\ve}} \dot{\mf{q}} \\
& \leqslant   \zeta(\ve) \Delta w_{\ve}  + s(\ve) w_{\ve}(1 - w_{\ve}^{2}) \\
& \quad \quad + s(\ve) \bigg\{ \frac{\lambda}{L_{\ve}} - 3
\frac{\lambda}{L_{\ve}} w_{\ve}^{2} -  
\frac{c(\gamma) K_{1}}{L_{\ve}}  + \mathcal{O}\Big(\frac{1}{L^{2}_{\ve}}\Big)
\bigg\} 1_{\{ | \mf{q} | \leqslant \gamma \}}\\
& \leqslant \zeta(\ve) \Delta w_{\ve} + s(\ve) w_{\ve}(1 -
w_{\ve}^{2}),
\end{equs}
where the last inequality holds for all $ \ve $ sufficiently small, and
provided $ K_{1} $ is chosen large enough. Hence we have proven
\eqref{eqn:prf-slowmcf-2}.

The next step is to extend this subsolution $w_{\ve}$ to all $ x \in \RR^{d} $ (at
the moment it is defined only for 
$ | x - x_{0} | \in [\varrho/2 - \varrho^{\prime}, \varrho/2+ \varrho^{\prime}] $).
Here we follow two different arguments in the interior and the exterior of the
ball $ B_{\frac{\varrho}{2}} $. Let us start with the exterior. We observe that
for any fixed $ \varrho^{\prime
\prime} \in (0, \varrho^{\prime}) $ it holds that for all $ \ve $ sufficiently
small and some $ \lambda^{\prime} >0 $:
\begin{equs}
w_{\ve}  (\sigma, x) &\leqslant -1 - \frac{\lambda^{\prime}}{ L_{\ve}} , \\
& \forall (\sigma, x) \in [0,
\zeta(\ve)^{-1}  \sigma^{\prime}] \times \{ x  \ \colon \ | x - x_{0} | \in
[\varrho/2 + \varrho^{\prime \prime }, \varrho/2 + \varrho^{\prime}] \}.
\end{equs}
Here we use that asymptotically, for $ x \to - \infty $, we have $
\mf{q}(x) \leqslant -1 + 2 e^{- 2 x},$ so that by definition, for some $
c(x_{0}) > 0 $
\begin{equs}
\sup_{| x - x_{0} | \in [\varrho/2 + \varrho^{\prime \prime}  , \varrho/2+
\varrho^{\prime} ]}w_{\ve}(\sigma, x) & \leqslant  \mf{q} \left(-L_{\ve}\left( c(x_{0})
+ \mathcal{O}(\zeta(\ve) ) \right) \right) - \frac{\lambda}{L_{\ve}} \\
& \leq -1 + 2 \exp ( - \left( c(x_{0}) + \mO(\zeta_{\ve}) \right) L_{\ve}) - \frac{\lambda}{L_{\ve}} \leqslant
-1 - \frac{\lambda^{\prime}}{L_{\ve}},
\end{equs}
where the last inequality holds for $ \ve $ sufficiently small. Now consider $ \overline{\lambda} $ as in \eqref{eqn:slw-mcf-apr} and $
\overline{\Phi} $ as in \eqref{eqn:Phi-bar}. Then let $ S^{\ve} $ be
the set
\begin{equs}
S^{\ve} = \{   x  \ \colon \ | x - x_{0} | \in
[\varrho/2 + \varrho^{\prime \prime }, \varrho/2 + \varrho^{\prime}] \}.
\end{equs}
Since $ \sigma \mapsto
\overline{\Phi} (s(\ve) \sigma, -1 - \overline{\lambda} \ve^{\frac{d}{2} -
\alpha}) \eqdef \overline{\Phi}_{\ve}(\sigma)$
is a spatially homogeneous solution to \eqref{eqn:prf-slow-mcf-ac} we find
that for small $ \ve $,
\begin{equs}
\underline{w}_{\ve} (\sigma, x) = \begin{cases} \max \{ w_{\ve}(\sigma, x),
\overline{\Phi}_{\ve}(\sigma ) \}, \
& \text{ if } | x - x_{0} | \in [ \frac{\varrho}{ 2}  - \varrho^{\prime},
\frac{\varrho}{2}  + \varrho^{\prime}], \\
\overline{\Phi}_{\ve} (\sigma) \quad & \text{ else. } 
\end{cases}
\end{equs}
is a subsolution to \eqref{eqn:prf-slow-mcf-ac} in the viscosity sense (cf. the
proof of Lemma~\ref{lem:abs.v}) on the set
\begin{equs}
\ [0, \zeta(\ve)^{-1} \sigma^{\prime}] \times \{ x  \ \colon \ | x -
x_{0}| \geqslant \varrho/2 - \varrho^{\prime} \}.
\end{equs}
Here we use that asymptotically $ \lambda^{\prime} L_{\ve}^{-1} \gg \ve^{\frac{d}{2}
 - \alpha} $, so that $ \underline{w}_{\ve}(\sigma, x) =
\overline{\Phi}_{\ve} (\sigma) $ for all $ x \in S^{\ve} $.

Finally, we want to extend the subsolution to the interior of the ball $
B_{\frac{\varrho}{2}}(x_{0}) $. To complete this extension we consider a convex
combination between $ \underline{w}_{\ve} $ and the constant $ 1 -
\frac{\lambda}{L_{\ve}}.$ Let us fix a decreasing smooth
function $ \Upsilon \colon \RR \to
[0,1] $ such that $ \Upsilon (x) = 1 $ if $
x \leqslant  \big( \frac{\varrho - \varrho^{\prime}}{2} \big)^{2}  $, and $
\Upsilon (x) = 0 $ if $ x \geqslant
\big( \frac{\varrho - \varrho^{\prime}/2 }{2} \big)^{2}. $ Then define, for
some constant $ K_{2} > 0 $
\begin{equs}
\underline{u}_{\ve}(\sigma, x) = \big( 1 - & \Upsilon \big( | x- x_{0} |^{2}
+K_{2} \zeta(\ve) \sigma \big) \big) \underline{w}_{\ve}( \sigma , x) \\
& + \Upsilon( | x - x_{0} |^{2} + K_{2} \zeta(\ve) \sigma) \big(1 -
\frac{\lambda}{L_{\ve}} \big).
\end{equs}
Note that by considerations on the support of $ \Upsilon $ and the domain of
definition of $ \underline{w}_{\ve} $, $ \underline{u}_{\ve} $ is well defined.
We claim that if $ K_{2} $ is sufficiently large the function $
\underline{u}_{\ve} $ is a viscosity subsolution to \eqref{eqn:prf-slow-mcf-ac}
on $ [0, \zeta(\ve)^{-1}\sigma^{\prime}] \times \RR^{d} $. In fact, by our previous calculations we find
\begin{equs}
(\partial_{\sigma} - \zeta(\ve) \Delta) \underline{u}_{\ve} \leqslant
s(\ve) & \ (1-\Upsilon) \ \underline{w}_{\ve}(1 - \underline{w}_{\ve}^{2})  \\
& + \zeta(\ve) \Big( \underline{w}_{\ve} - \Big(1 -
\frac{\lambda}{L_{\ve}} \Big) \Big) \Big[ 2 \dot{\Upsilon} +
4 \ddot{\Upsilon} | x - x_{0} |^{2} - K_{2} \dot{\Upsilon} \Big] \\
 \leqslant s(\ve) & \ (1-\Upsilon) \ \underline{w}_{\ve}(1 -
\underline{w}_{\ve}^{2}) \\
& + 4 \ \zeta(\ve) \ \ddot{\Upsilon} \ | x - x_{0} |^{2}( \underline{w}_{\ve} - (1 -
\lambda/ L_{\ve}) ),
\end{equs}
assuming $ K_{2} \geqslant 2 $, and using that $ \underline{w}_{\ve} \leqslant 1 -
\frac{\lambda}{L_{\ve}}$ and $ \dot{\Upsilon} \leqslant 0 $, as $ \Upsilon $ is
decreasing.
Now on the set $ \{ \Upsilon >0 \}  $, we have $ \underline{w}_{\ve} \geqslant
0$, provided $ \ve $ is sufficiently small. So using the concavity of $ [0, 1] \ni u \mapsto u
(1- u^{2}) $:
\begin{equs}
(1-\Upsilon) \underline{w}_{\ve}(1 - \underline{w}_{\ve}^{2}) & \leqslant \underline{u}_{\ve}(1 -
\underline{u}_{\ve}^{2}) - \Upsilon (1 - \lambda/L_{\ve})(1 - ( 1 -
\lambda/L_{\ve})^{2}).
\end{equs}
Furthermore, we can find a constant $ \nu \in (0, 1) $ such that
\begin{equs}
\ddot{\Upsilon} \geqslant  0 \quad \text{ if } \ \ \Upsilon \leqslant \nu.
\end{equs}
Hence we see that on the set $ \{ \Upsilon \leqslant \nu \}$
\begin{equs}
(\partial_{\sigma} - \zeta(\ve) \Delta) \underline{u}_{\ve}& \leqslant
s(\ve)  \ (1-\Upsilon) \ \underline{w}_{\ve}(1 - \underline{w}_{\ve}^{2})  \\
& \leqslant s(\ve)\underline{u}_{\ve} (1 - \underline{u}_{\ve}^{2}).
\end{equs}
On the other hand, on the set $ \{ \Upsilon> \nu \}$ we have for some $ C>0 $:
\begin{equs}
(\partial_{\sigma} - \zeta(\ve) \Delta) \underline{u}_{\ve} & \leqslant
s(\ve)\underline{u}_{\ve}  (1 - \underline{u}_{\ve}^{2}) - \nu s(\ve) (1 - \lambda/L_{\ve})(1 - ( 1 -
\lambda/L_{\ve})^{2})+  C \ \zeta(\ve) \ | \ddot{\Upsilon}| \\
& \leqslant s(\ve)\underline{u}_{\ve}  (1 - \underline{u}_{\ve}^{2}) - 2 \nu
\frac{\lambda s(\ve)}{L_{\ve}}+ \mathcal{O}(\zeta(\ve))  \\
& \leqslant  s(\ve)\underline{u}_{\ve}  (1 - \underline{u}_{\ve}^{2}),
\end{equs}
where the last inequality holds for $ \ve $ is sufficiently small.

To conclude, for $ \ve $ sufficiently small, we have constructed a subsolution
to \eqref{eqn:prf-slow-mcf-ac} such that, by \eqref{eqn:slw-mcf-apr} and up to
choosing $ \lambda $ sufficiently large, the initial condition satisfies
\begin{equ}
\widetilde{u}_{\ve, 0}(\cdot) \geqslant \underline{u}_{\ve}(0, \cdot).
\end{equ}
By comparison, since $ \lim_{\ve \to 0}
\underline{u}_{\ve}(1, x) = 1$, for all $ x  $ in a neighborhood of $
x_{0} $ and through the upper bound of Lemma~\ref{lem:cmng-dwn-inf} our proof is complete.
\end{proof}
The next result establishes convergence to level set solutions of mean
curvature flow. Recall that we have defined $$ U_{\ve} (\sigma, x) = u_{\ve}( \sigma
T_{\ve} + \tau_{\star}(\ve), x L_{\ve}) = u_{\ve}( t_{\star}(\ve) + (\sigma-1)
T_{\ve}, x L_{\ve}).$$
In these variables the initial condition for the mean curvature flow appears at time $ \sigma = 1 $.
The level set formulation of mean curvature
flow is then given by viscosity solutions to the following equation for $
(t, x) \in [1, \infty) \times \RR^{d} $
\begin{equs}[eqn:lvl-mcf]
\partial_{t} w= \Delta w - \frac{(\nabla w)^{\otimes 2} : \nabla^{2}w}{| \nabla w
|^{2}}  , \qquad w(1, x) = w_{1}(x).
\end{equs}
Here $ (\nabla w)^{\otimes 2} : \nabla^{2}w = \sum_{i,j=1}^{d}
\partial_{i} w \partial_{j}w \partial_{ij}w $.
If $ w_{1} $ is uniformly continuous on $ \RR^{d} $ there exists a unique viscosity solution
to the above equation, see e.g. \cite[Theorem 1.8]{IshiiSouganidis1995}. Furthermore, we will be only interested in the evolution of the sets $ \{ w > 0 \}, \{ w < 0 \} $
and $ \{ w = 0 \} $, which motivates the following definition.

\begin{definition}\label{def:mcf}
For any $ f \in C_{\mathrm{loc}}(\RR^{d} ; \RR) $ we define $ v(f ; \cdot,
\cdot ) \colon [1, \infty) \times \RR^{d} \to \{ -1, 0, 1 \} $ by
\begin{equs}
v(f; \sigma, x) = \sgn \big( w(\sigma, x) \big), \qquad (\sigma, x) \in [1,
\infty) \times \RR^{d},
\end{equs}
with $ w $ the viscosity solution to
\eqref{eqn:lvl-mcf} with an arbitrary initial condition
$ w_{1} \in C^{2}_{b}(\RR^{d}; \RR) $ satisfying:
\begin{equs}
\{ w_{1} > 0 \} = \{ f >0 \}, \quad \{ w_{1} <0 \} = \{ f < 0 \}, \quad \{ w_{1} = 0 \} = \{ f = 0 \}.
\end{equs}
The function $ v(f ; \cdot, \cdot) $ does not depend on the particular choice
of $ w_{1} $ by \cite[Theorem 5.1]{evans1991}.
\end{definition}
We recall the definition fo the sets $ K_{\delta} $ as in \eqref{eqn:K-sets}
for $ \delta \in (0, 1) $:
\begin{equ}
K_{\delta}  = \{ z = (\sigma, x) \in (1, \infty) \times \RR^{d}  \ \colon  \ | z | \leqslant
\delta^{-1}, \ \sigma > 1 + \delta,  \ d(z, \Gamma) \geqslant \delta\} \;.
\end{equ}
Now we can state our concluding result.
\begin{proposition}\label{prop:conv-mcf}
Consider $ (\Omega, \mF, \PP) $ as in Lemma~\ref{lem:prob-space} and let $
v(\Psi ; \cdot, \cdot) \colon [1,
\infty) \times \RR^{d} \to \{ -1, 0, 1 \} $ be as in Definition~\ref{def:mcf}.
Then for any $ \delta, \zeta \in (0, 1) $
\begin{equs}
\lim_{ \ve \to 0} \PP \( \| U_{\ve}(\cdot) - v(\Psi; \cdot) \|_{K_{\delta}}
\geqslant \zeta  \) =0 \;.
 \end{equs}
\end{proposition}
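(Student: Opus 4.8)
\emph{Proof proposal.} The plan is to reduce everything, via the a priori bound and the symmetry of the equation, to a single lower barrier, and then to run the Allen--Cahn--to--mean-curvature-flow machinery of \cite{barles1998,evans1991} starting from the datum produced by Proposition~\ref{prop:front-formation}. First, note that $U_\ve(\sigma,x)=u_\ve(t_\star(\ve)+(\sigma-1)T_\ve,xL_\ve)$, since $L_\ve^2=T_\ve$, solves the standard Allen--Cahn equation $\partial_\sigma U_\ve=\Delta U_\ve+T_\ve(U_\ve-U_\ve^3)$ with vanishing parameter $L_\ve^{-1}\to0$. Work on the probability space of Lemma~\ref{lem:prob-space} and pass to a subsequence along which $u_\ve(t_\star(\ve),\cdot L_\ve)\to\Phi(0,\Psi(\cdot))$ almost surely; since a.s.\ convergence along subsequences gives convergence in probability, it suffices to prove the statement for a fixed realisation, for which $\Psi$ is a fixed smooth function with $0$ a regular value. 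Pick $w_1\in C^2_b(\RR^d)$ with the same sign pattern as $\Psi$ (e.g.\ $w_1=\tanh\circ\Psi$, which lies in $C^2_b$ since $\Psi$ grows sub-polynomially), let $w$ solve \eqref{eqn:lvl-mcf} with $w(1,\cdot)=w_1$, so that $v(\Psi;\cdot)=\sgn w$, and observe that $K_\delta\cap\{w>0\}$ and $K_\delta\cap\{w<0\}$ are compact subsets of the open sets $\{w>0\}$, $\{w<0\}$. By Lemma~\ref{lem:cmng-dwn-inf} in the rescaled variables, $|U_\ve(\sigma,\cdot)|\le(1-e^{-2T_\ve(\sigma-\sigma_\ve)})^{-1/2}\to1$ for $\sigma$ fixed past the initial time $\sigma_\ve$ introduced below; together with the oddness of the equation under $(\Psi,w,U_\ve)\mapsto(-\Psi,-w,-U_\ve)$, this reduces the claim on $K_\delta$ to the single lower barrier
\[
\liminf_{\ve\to0}U_\ve(\sigma,x)\ge1\qquad\text{locally uniformly on }\{w>0\}.
\]

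For the initial datum, apply Proposition~\ref{prop:front-formation} with $t(\ve)=t^1_\star(\ve)$: the hypotheses hold, as $t^1_\star(\ve)-t^1_\star(\ve)=0$ and $(t^1_\star(\ve)-t_\star(\ve))/T_\ve=\log\log\ve^{-1}/T_\ve\to0$. Writing $\sigma_\ve=1+\log\log\ve^{-1}/T_\ve\to1$, this yields $\|U_\ve(\sigma_\ve,\cdot)-\sgn\Psi(\cdot)\|_{K^1_\delta}\to0$ for every $\delta$, i.e.\ $U_\ve(\sigma_\ve,\cdot)\to\sgn\Psi(\cdot)$ locally uniformly on $\RR^d\setminus\Gamma_1$; in particular, for every $R>0$ and $c>0$ one has $U_\ve(\sigma_\ve,x)\ge1-o(1)$ uniformly on $\{|x|\le R,\ \Psi(x)\ge c\}$, while $|U_\ve(\sigma_\ve,\cdot)|\le1+o(1)$ everywhere.

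The crux is then the construction of a subsolution. Fix a compact $\mathcal K\subseteq\{w>0\}$ and let $\sigma_{\mathcal K}=\sup\{\sigma:(\sigma,x)\in\mathcal K\}$. Using the level-set comparison principle, the approximation of the open set $\{\Psi>0\}$ from inside by bounded smooth domains, and the fact that classical mean curvature flow is an inner barrier exhausting the level-set flow (\cite[Theorem~5.1]{evans1991}, \cite[Theorem~4.1]{barles1998}), choose a bounded open $\Omega$ with $\overline\Omega\subseteq\{\Psi>0\}$ and smooth boundary $\Sigma$ whose classical mean curvature flow $\Sigma_\sigma$ (with enclosed domain $\Omega_\sigma$, $\Omega_1=\Omega$) stays smooth on $[1,\sigma_{\mathcal K}]$ and satisfies $\mathcal K\subseteq\bigcup_{\sigma}\{\sigma\}\times\Omega_\sigma$. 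Let $d(\sigma,x)$ be the signed distance to $\Sigma_\sigma$, positive inside $\Omega_\sigma$; it is smooth with $|\nabla d|=1$ and $\partial_\sigma d-\Delta d\le Cd$ in a space-time tube around $\bigcup_\sigma\{\sigma\}\times\Sigma_\sigma$, see \cite[(6.4)]{evans1991}. Exactly as in the proofs of Lemma~\ref{lem:front-formation-loglog} and Proposition~\ref{prop:slower-than-mcf}, build $\underline U_\ve$ on $[\sigma_\ve,\sigma_{\mathcal K}]\times\RR^d$ of optimal-profile type: near the moving front it is $\mf{q}(L_\ve e^{-\mu(\sigma-\sigma_\ve)}d(\sigma,x)-K_1L_\ve^{-1}(\sigma-\sigma_\ve))-\Lambda_\ve$ with $\mf{q}=\tanh$, $\mu\ge C$; outside $\Omega_\sigma$ it is capped below by the spatially homogeneous solution $\sigma\mapsto\overline\Phi(T_\ve(\sigma-\sigma_\ve),-1-\overline\lambda\ve^{\frac d2-\alpha})$; and it is extended into the interior of $\Omega_\sigma$ by a convex combination with the spatially homogeneous solution from $1-\Lambda_\ve$ via a smooth cutoff of $\dist(x,\Sigma_\sigma)^2+K_2(\sigma-\sigma_\ve)$. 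Here $\Lambda_\ve\to0$ is chosen to dominate the defect of $U_\ve(\sigma_\ve,\cdot)$ from $1$ on $\overline\Omega$. The computations of Proposition~\ref{prop:slower-than-mcf} — which used only smoothness of $d$ near the moving surface, $|\nabla d|=1$ and $\partial_\sigma d-\Delta d\le Cd$ there, and the monotonicity and concavity of $u\mapsto u(1-u^2)$ near $u=\pm1$ — carry over and show that, for $\mu,K_1,K_2$ large enough, $\underline U_\ve$ is a viscosity subsolution of $\partial_\sigma U=\Delta U+T_\ve(U-U^3)$. Because the front of $\underline U_\ve(\sigma_\ve,\cdot)$ sits on $\Sigma$, compactly contained in $\{\Psi>0\}$, the second paragraph gives $\underline U_\ve(\sigma_\ve,\cdot)\le U_\ve(\sigma_\ve,\cdot)$ for $\ve$ small (the bulk bound holds on $\overline\Omega$, and $U_\ve(\sigma_\ve,\cdot)\ge-1-o(1)$ holds everywhere, matching the capping solution via Lemma~\ref{lem:cmng-dwn-inf}). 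By comparison $U_\ve\ge\underline U_\ve$; since $\mf{q}(L_\ve\cdot)\to1$ inside $\bigcup_\sigma\{\sigma\}\times\Omega_\sigma$ and $\Lambda_\ve\to0$, we obtain $\liminf_{\ve\to0}U_\ve\ge1$ uniformly on $\mathcal K$. Taking $\mathcal K=K_\delta\cap\{w>0\}$ and combining with oddness and the a priori upper bound completes the proof. (Alternatively one may invoke a ready-made convergence theorem for Allen--Cahn to level-set mean curvature flow, the only points to check being that the initial time $\sigma_\ve\to1$ matches the nominal time $1$, handled by continuity of the flow $w(\sigma_\ve,\cdot)\to w_1$, and that $|U_\ve|\le1+o(1)$ rather than $\le1$.)

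\emph{Main obstacle.} The delicate step is the third paragraph: the level-set solution $w$ may be genuinely non-smooth and its zero set may fatten for $d\ge3$ (Remark~\ref{rem:fattening}), so one cannot flow $\Gamma_1$ itself and must instead squeeze each compact $\mathcal K\subseteq\{w>0\}$ between smooth mean curvature flows issued strictly inside $\{\Psi>0\}$; moreover the datum at $\sigma_\ve$ is controlled only away from $\Gamma_1$, so the subsolution's own interface has to be planted strictly inside $\{\Psi>0\}$, letting the Allen--Cahn dynamics generate the interface as in Proposition~\ref{prop:slower-than-mcf}, and one must verify that the $\mathcal O(L_\ve^{-1})$ and $\mathcal O(\ve^{\frac d2-\alpha})$ corrections in the subsolution inequality survive an $\mathcal O(1)$-time (rather than $o(1)$-time) mean-curvature-flow evolution — with the effective small parameter $L_\ve^{-1}$ decaying only logarithmically, the relevant errors must beat powers of $L_\ve^{-1}$, exactly as the estimates of Proposition~\ref{prop:slower-than-mcf} are organised.
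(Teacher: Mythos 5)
Your overall skeleton (coupling from Lemma~\ref{lem:prob-space}, passing to an a.s.\ convergent subsequence, reduction by the a priori bound of Lemma~\ref{lem:cmng-dwn-inf} and odd symmetry to a one-sided barrier, then comparison) matches the paper, but the core step is different and contains a genuine gap. You propose to squeeze every compact $\mathcal K\subseteq\{w>0\}$ inside the \emph{classical} mean curvature flow of a smooth domain $\Omega\subset\subset\{\Psi>0\}$ that ``stays smooth on $[1,\sigma_{\mathcal K}]$'' and then to re-run the hand-made traveling-wave subsolution of Proposition~\ref{prop:slower-than-mcf}. Neither half of this is justified. First, for $d\geqslant 3$ a smooth compact hypersurface generically develops singularities under classical MCF in finite time, so the existence of a smooth inner flow over the \emph{order-one} time horizon $[1,\sigma_{\mathcal K}]$ is not available; and even granting smoothness, the assertion that the domains of such inner flows exhaust $\{w>0\}$ as the inner approximation improves is exactly a stability statement for level-set flows that needs proof --- it is what the paper establishes through its $R\to\infty$ argument (uniform $C^\alpha$ bounds from the comparison principle of \cite[Theorem 4.1]{evans1991} plus stability of viscosity solutions), not something that follows from \cite[Theorem 5.1]{evans1991} alone. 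Second, the computation of Proposition~\ref{prop:slower-than-mcf} does \emph{not} ``carry over'' to order-one MCF times: that construction is calibrated to a vanishing flow time $\zeta(\ve)\to0$ (the damping factor $e^{-\mu\zeta\sigma}$ stays $1+o(1)$ and the front-drift correction $K_1 s(\ve)/L_\ve$ is $o(L_\ve)$, hence an $o(1)$ macroscopic displacement); with $s(\ve)\sim T_\ve=L_\ve^2$ the same ansatz produces a mismatch of size $L_\ve^2(1-e^{-2\mu(\sigma-\sigma_\ve)})$ against the profile equation and an $O(1)$ front displacement, so one would need genuinely finer matched-asymptotics barriers (Chen/de Mottoni--Schatzman type), i.e.\ precisely the content of the theorem you would be trying to avoid.

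The paper circumvents all of this by never flowing anything classically: it builds $\ve$-\emph{independent} barrier data $\underline u_R=\varphi_R+(1+\varphi_R)\Psi/\sqrt{1+\Psi^2}-R^{-1}$ at $\sigma=1$, lying below $U_\ve(1,\cdot)$ by the a.s.\ convergence plus Lemma~\ref{lem:cmng-dwn-inf}, applies \cite[Theorem 4.1]{barles1998} as a black box to the Allen--Cahn evolution of each fixed $\underline u_R$ (this theorem needs no smoothness of the level-set flow), uses comparison, and then sends $R\to\infty$, handling the failure of uniform continuity of $\Psi/\sqrt{1+\Psi^2}$ by the partition-of-unity rescaling $g$ so that all initial data are uniformly in $C^2_b$ and viscosity stability applies. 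Your parenthetical ``alternatively invoke a ready-made convergence theorem'' is in fact where the real work lies, since such theorems apply to fixed (or well-prepared) data, not to the random, $\ve$-dependent $U_\ve(\sigma_\ve,\cdot)$; sandwiching by $\ve$-independent data and then removing the cutoff is the substance of the paper's proof and is missing from yours. A smaller but related flaw: your proposed choice $w_1=\tanh\circ\Psi$ need not lie in $C^2_b$, because $\nabla\Psi$ is a.s.\ unbounded on $\RR^d$ (this is exactly what the weights $a_k$ in the paper's construction are for), so even the identification $v(\Psi;\cdot)=\sgn w$ requires the more careful choice of $w_1$.
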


\begin{proof}

In analogy to the previous results, up to considering suitable subsequences we
can assume that $ U_{\ve}(1, \cdot) \to \overline{\Phi}(0, \Psi (\cdot)) $
almost surely in $ C_{\mathrm{loc}}(\RR^{d}) $ as $ \ve \to 0 $. Then it
suffices to prove that almost surely, for any $ \delta \in (0, 1) $
\begin{equs}
 \lim_{\ve \to 0} \| U_{\ve}(\cdot) - v(\Psi; \cdot) \|_{K_{\delta}} = 0 \;.
\end{equs}

In the setting just introduced, our aim is to construct suitable super- and sub-solutions to $
u_{\ve}(t_{\star}(\ve) + (\sigma-1) T_{\ve}, x L_{\ve}) $ with initial conditions that
are independent of $ \ve $ and constant outside of a compact set. After establishing convergence to mean curvature
flow for these super- and sub-solutions we will use comparison to obtain the
convergence of the original sequence. For convenience we will restrict
ourselves to the construction of subsolutions, which will guarantee convergence on the set $
\{ v( \Psi; \cdot, \cdot) > 0 \} $. The construction of supersolutions is
analogous.

  Consider a smooth function $ \varphi \colon \RR^{d} \to [-1, 0] $ such that $
  \varphi(x) =0$, for all $| x | \leqslant
  \frac{1}{2} $ and $ \varphi(x) = -1,$ for all $ | x | \geqslant 1$. Then for
any $ R \geqslant 1 $ define $ \varphi_{R}(x) =
  \varphi(x/R) $ and consider
  \begin{align*}
    \underline{u}_{R} & = \varphi_{R} + (1 +
    \varphi_{R}) \frac{\Psi}{\sqrt{1 +  \Psi^{2}}} - R^{-1}.
  \end{align*}
In particular by our assumptions for any $ R \geqslant 1$ there
exists a $ \ve(R) \in
(0, 1) $ such that
\begin{equs}[eqn:super-sub-solutions]
\underline{u}_{R}(x) \leqslant U_{\ve}(1, x), \quad \forall x \in \RR^{d}, \quad \ve \in (0,
\ve(R)).
\end{equs}
Here we use the convergence $ U_{\ve}(1, \cdot) \to \overline{\Phi}(0, \Psi
(\cdot)) $ in $ C_{\mathrm{loc}} (\RR^{d}) $ together with the a-priori bound
from Lemma~\ref{lem:cmng-dwn-inf}, which guarantees that $ U_{\ve}(\sigma,
\cdot) \geqslant -1 - R^{-1}$ for $ \sigma \geqslant  1 $ and $ \ve $
sufficiently small.
Moreover, locally uniformly over $ x \in \RR^{d} $:
$\lim_{R \to \infty} \underline{u}_{R}(x) =
\overline{\Phi}(0, \Psi(x))$.
Now, let $\underline{u}_{R, \ve} (\sigma,x) $ be the solution
to the rescaled Allen--Cahn equation
\begin{equ}[eqn:defn-super-sub-eqn]
\partial_{t} \underline{u}_{R, \ve}  = \Delta \underline{u}_{R,
\ve} +
T_{\ve} \underline{u}_{R, \ve}(1 - \underline{u}_{R,
\ve}^{2}), \qquad \underline{u}_{R, \ve}(1, x) =
\underline{u}_{R}(x).
\end{equ}
Then by \cite[Theorem 4.1]{barles1998}
$\lim_{\ve \to 0} \underline{u}_{R , \ve}  = \sgn(
\underline{w}_{R}),$ in $C_{\mathrm{loc}}((1, \infty )
\times \RR^{d} \setminus \{ \underline{w}_{R} = 0 \})$,
where $ \underline{w}_{R} $ is the unique level set solution to
\eqref{eqn:lvl-mcf} with initial condition $ \underline{u}_{R} $.
By comparison, using the ordering \eqref{eqn:super-sub-solutions} and the fact
that $ U_{\ve} $ also solves $ \eqref{eqn:defn-super-sub-eqn} $ with a
different initial condition, we obtain that for any $ R \geqslant 1$
  \begin{align*}
    \lim_{\ve \to 0} U_{\ve} (\sigma, x ) = 1& \qquad \text{
locally uniformly over } \qquad (\sigma, x) \in \underline{\mP}^{R},
  \end{align*}
  where $ \underline{\mP}^{R} = \{ \underline{w}_{R}( \cdot, \cdot) >
  0\} \subseteq (1, \infty) \times \RR^{d} $.

  Now we would like to pass similarly to the limit $ R \to \infty $, but we
have to take care of the fact that the limiting initial condition $
\lim_{R \to \infty} \underline{u}_{R} = \Psi / \sqrt{1 + \Psi^{2}} $ may not be uniformly
  continuous on $ \RR^{d} $, which complicates the construction of solutions.
Of course, this problem is not significant, since we can rescale the initial
condition arbitrarily by multiplying with a positive function without modifying
the mean curvature flow evolution. 

Hence consider a partition of the unity $ \{ \varphi_{k} \}_{k \in \NN}$ of $
\RR^{d} $. Namely, for every $ k \in \NN $, let $ \varphi_{k}
\colon \RR^{d} \to [0, \infty) $ be a smooth function with compact support,
such that $ \sum_{k \in \NN} \varphi_{k}(x) = 1.$ We will assume that the
partition is locally finite, in the sense that there exists an $ M \in \NN $
such that for every $ k_{1}, \ldots, k_{M} \in \NN $ with $ k_{i} \neq
k_{j} $ for all $ i \neq j $ we have $ \cap_{k =1}^{M}
\supp(\varphi_{k}) = \emptyset $. In addition we assume that $ \sup_{k \in \NN}
\| \varphi_{k} \|_{C^{2}} < \infty $. Then let 
\[ a_{k} = 1 + \sup_{x \in \supp(\varphi_{k}) } \big\{ | \Psi (x) | + | \nabla \Psi (x)| + |
\nabla^{2} \Psi(x)| \big\}\]
and define
\begin{align*}
  g(x) = \sum_{k
  \in \NN} \frac{1}{a_{k}} \varphi_{k}(x) \sqrt{1 + \Psi^{2}(x)}.
\end{align*}
Now we can consider the solution $ \underline{v}_{R}$ to
  \eqref{eqn:lvl-mcf} with initial condition $
  \underline{v}_{R, 1}(x) = g (x) \cdot \underline{u}_{R, 1}
  (x)$, so that from the definition of $ g $ we obtain:
\begin{equation}\label{eqn:prf-mcf-uniform-bound}
  \sup_{R \geqslant 1}   \| \underline{v}_{R , 1} \|_{C^{2}_{b}(\RR^{d})}  < \infty.
\end{equation}
  By \cite[Theorem 5.1]{evans1991} the evolution of the interface does not depend on the
  particular choice of the initial condition, as long as they share the same
  initial interface. In particular, since $ g $ is strictly positive we find
  that
  \[ \underline{\mP}^{R} = \{ \underline{v}_{R}(\cdot, \cdot) > 0 \}.\] 
The
bound \eqref{eqn:prf-mcf-uniform-bound} now guarantees a uniform bound
on the solutions (this is the consequence of the comparison principle in \cite[Theorem 4.1]{evans1991}) for any $ \alpha \in (0, 1) $:
\begin{equ}
  \sup_{R \geqslant 1} \| \underline{v}_{R} \|_{C^{\alpha}_{b}((1, \infty) \times
\RR^{d})} < \infty.
\end{equ}
Since in addition 
$$ \lim_{R \to \infty}\underline{v}_{R, 1} = \sum_{k \in \NN} \varphi_{k}
\frac{\Psi}{a_{k}} \eqdef w_{1},$$ 
with $ w_{1} \in C^{2}_{b}(\RR^{d}; \RR) $, using compactness as well as stability of viscosity solutions we find that
$\lim_{R \to \infty} \underline{v}_{R}(\cdot, \cdot) = w(\cdot, \cdot )$ in  
$C([1, \infty) \times \RR^{d}; \RR)$,
where the latter is the unique viscosity solution to \eqref{eqn:lvl-mcf} with
initial condition $ w_{1} $. This completes the proof.
\end{proof}
The following corollary relates level set solutions to mean curvature flow to
classical solutions in the case $ d=2 $.
Consider the unit torus $ \TT = \RR/\ZZ $. A continuous (resp.\
smooth) closed curve is any continuous (resp.\ smooth) map $ \gamma \colon \TT \to
\RR^{2} $. We say that the curve is non self-intersecting if the map $ \gamma
$ is injective.

\begin{corollary}\label{cor:2D-mcf}
If $ d=2 $, the set $ \Gamma_{1} = \{ \Psi = 0 \} $ consists of a countable
collection of disjoint smooth, closed, non self-intersecting curves: $ \Gamma_{1} = \bigcup_{i \in \NN}
\gamma_{i}(\TT). $
Then, for any $ \sigma \geqslant 1 $ the set $ \Gamma_{\sigma} = \{ v( \Psi;
\sigma, \cdot) = 0 \}$, with $ v( \Psi) $ as in
Proposition~\ref{prop:conv-mcf}, is the disjoint union $ \Gamma_{\sigma} =
\bigcup_{i \in \NN} \gamma_{i, \sigma}(\TT) $ of continuous curves $
\gamma_{i, \sigma} $ which are the mean curvature flow evolution of
the curves $ \gamma_{i} $, in the sense of \cite{Grayson87}.
\end{corollary}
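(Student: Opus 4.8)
The plan is to reduce the corollary to three ingredients: an almost-sure geometric description of the nodal set $\Gamma_1$, the classical Gage--Hamilton--Grayson theory for the resulting curves, and the consistency between classical curve-shortening flow and the level-set formulation of Definition~\ref{def:mcf}.

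First I would record the regularity of $\Psi$. Because the covariance \eqref{eqn:def-Psi} is smooth, Kolmogorov's continuity criterion applied to $\Psi$ together with all its derivatives yields a modification lying in $C^\infty(\RR^2)$, which I fix. For each $x$ the Gaussian vector $(\Psi(x),\nabla\Psi(x))$ is non-degenerate, so Bulinskaya's lemma gives that almost surely $0$ is a regular value of $\Psi$; hence $\Gamma_1=\{\Psi=0\}$ is a closed subset of $\RR^2$ which is a properly embedded smooth $1$-submanifold. It therefore has at most countably many connected components, each itself closed in $\RR^2$ and connected, hence diffeomorphic to $\RR$ or to $\TT$; and since $\Gamma_1\cap\overline{B}_R$ is a compact $1$-manifold with boundary for every $R$, the components form a locally finite family. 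The only non-topological input at this stage is the exclusion of the non-compact (line) components, i.e.\ the fact, specific to dimension $2$, that the nodal line of the planar Bargmann--Fock field does not percolate at the critical level $0$ — this is where dimension enters, cf.\ the discussion following Remark~\ref{rem:fattening} and \cite{duminil} for the opposite behaviour when $d\ge3$. Granting it, every component is bounded, hence compact (Heine--Borel), hence a smoothly embedded copy of $\TT$, which gives $\Gamma_1=\bigcup_i\gamma_i(\TT)$ as a locally finite, disjoint family of smooth non self-intersecting closed curves.

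Next I would apply the Gage--Hamilton--Grayson theorem \cite{Grayson87}: each $\gamma_i$ evolves by classical curve-shortening flow on a maximal interval $[1,\sigma_i)$, stays embedded, and shrinks to a round point as $\sigma\uparrow\sigma_i$; I set $\gamma_{i,\sigma}(\TT)=\emptyset$ for $\sigma\ge\sigma_i$. By the avoidance principle, curves disjoint at $\sigma=1$ remain disjoint on their common interval of existence, so $\bigcup_i\gamma_{i,\sigma}(\TT)$ makes sense. To identify the set $\Gamma_\sigma=\{x:v(\Psi;\sigma,x)=0\}$, i.e.\ the zero set of the viscosity solution $w$ to \eqref{eqn:lvl-mcf}, with this union, I would invoke the standard consistency between classical and level-set mean curvature flow (for instance \cite[Theorem~6.1]{evans1991}): wherever and whenever the living curves $\gamma_{i,\sigma}$ together form a smooth classical solution, $\{w(\sigma,\cdot)=0\}$ coincides with $\bigcup_i\gamma_{i,\sigma}(\TT)$, which in particular has empty interior, so no fattening occurs. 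Crossing an extinction time $\sigma_i$ is then handled by a barrier argument: for $\sigma$ slightly below $\sigma_i$ one encloses $\gamma_{i,\sigma}$ in a small round circle, itself a classical solution extinct at some $\sigma_i'\le\sigma_i$, so by the avoidance principle the component of $\{w=0\}$ near the vanishing point disappears by time $\sigma_i$, while away from the extinction points the flow is still classical.

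I expect the main work to lie not in the geometry of Step~1 — whose only non-soft input, the absence of unbounded nodal components in two dimensions, is a known percolation fact — but in the bookkeeping of the last step. Indeed, the enclosed areas of the $\gamma_i$ can be arbitrarily small, so their extinction times $\sigma_i$ can be arbitrarily close to $1$ and there is in general no single interval on which \emph{all} of the $\gamma_i$ form a smooth classical flow. The identification must therefore be carried out locally in space and time: one fixes a space-time region such as $K_\delta$, uses that curve-shortening flow confines each curve to the convex hull of its initial position so that only finitely many $\gamma_i$ can influence that region, applies the consistency statement there, and then patches across the locally finitely many relevant extinction times. Assembling these pieces yields $\Gamma_\sigma=\bigcup_i\gamma_{i,\sigma}(\TT)$ for every $\sigma\ge1$, as claimed.
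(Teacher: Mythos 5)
Your proposal follows essentially the same route as the paper: the structure of $\Gamma_{1}$ is obtained exactly as in Lemma~\ref{lem:properties-field} (non-degeneracy of $(\Psi,\nabla\Psi)$ via Bulinskaya's lemma, the implicit function theorem, and boundedness of the nodal components, which is precisely the percolation input you grant and which the paper takes from \cite[Corollary 1.4]{beffara2017}), while the identification of $\Gamma_{\sigma}$ rests on Grayson's theorem together with the consistency and disjointness results of Evans--Spruck (\cite[Theorems 6.1 and 7.3]{evans1991}), just as in the paper's proof. Your additional bookkeeping around extinction times and local finiteness is a more detailed rendering of what the paper dispatches by those citations, not a different argument.
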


\begin{proof}
The fact that $
\Gamma_{1} $ is the disjoint union of smooth closed curves is proven in
Lemma~\ref{lem:properties-field}. Each of these curves evolves according to the
level set notion of mean curvature flow, which for concreteness we denote by $
\mM_{\sigma} \gamma_{i}(\TT) $, for $ i \in \NN $ and $ \sigma \geqslant 1 $.
We observe that \cite[Theorem 7.3]{evans1991} implies that the sets $ \mM_{\sigma}
\gamma_{i}(\TT) $ are pairwise disjoint over $ i \in \NN $, for any $ \sigma >1
$. In addition the level set mean curvature flow evolution of a smooth closed curve coincides
with the classical evolution by \cite[Theorem 6.1]{evans1991}, as long as the
latter is defined. Since by Lemma~\ref{lem:properties-field} our curves are non self-intersecting, \cite{Grayson87}
proves that the classical evolution is defined for all times, and our result
follows.
\end{proof}

\begin{lemma}\label{lem:properties-field}
Let $ \Psi $ be as in~\eqref{eqn:def-Psi}, in dimension $ d=2
$, and consider the random
set $\Gamma_{1}  = \{ x  \ \colon \ \Psi(x) = 0 \} \subseteq \RR^{2}.$
Then $ \PP - $almost surely the set $ \Gamma_{1} $ is a
countable union of smooth, non self-intersecting and disjoint closed curves:
$\Gamma_{1} =
\bigcup_{i \in \NN} \gamma_{i}(\TT),$ with $ \gamma_{i} \colon \TT \to
\RR^{2} $ smooth.

\end{lemma}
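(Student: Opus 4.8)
The plan is to use the standard regularity theory for nodal sets of smooth Gaussian fields, combined with the specific non-degeneracy properties of the Bargmann--Fock field $\Psi$. First I would record that $\Psi$ is a.s.\ smooth: its covariance kernel $\E[\Psi(x)\Psi(y)] = \exp(-|x-y|^2/8)$ is real-analytic, so by Kolmogorov's criterion (applied to all derivatives) $\Psi$ has a modification in $C^\infty(\RR^2)$, and we fix this modification. The key non-degeneracy input is that, almost surely, $0$ is a \emph{regular value} of $\Psi$, i.e.\ $\nabla \Psi(x) \neq 0$ at every $x$ with $\Psi(x) = 0$. This follows from a Bulinskaya-type argument: the pair $(\Psi(x), \nabla\Psi(x)) \in \RR^3$ is a non-degenerate Gaussian vector (its covariance matrix is invertible, which one checks from the explicit kernel --- the values and first derivatives of $\Psi$ at a single point are linearly independent in $L^2(\Omega)$), so the law of $(\Psi(x),\nabla\Psi(x))$ has a bounded density uniformly in $x$; hence the expected measure of $\{x \in B_R : |\Psi(x)| < \epsilon, |\nabla\Psi(x)| < \epsilon\}$ is $O(\epsilon^3)$, and letting $\epsilon \to 0$ shows this set is a.s.\ empty for each ball $B_R$, whence for all of $\RR^2$.

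Once $0$ is a regular value, the implicit function theorem applies at every point of $\Gamma_1 = \{\Psi = 0\}$: in a neighbourhood of each such point, $\Gamma_1$ is the graph of a smooth function, so $\Gamma_1$ is a smooth embedded $1$-dimensional submanifold of $\RR^2$. A connected smooth $1$-manifold without boundary is diffeomorphic either to $\RR$ or to $\TT$; to rule out the non-compact case and to organise $\Gamma_1$ into countably many disjoint closed curves, I would argue that each connected component of $\Gamma_1$ is bounded. This is where compactness enters: $\Gamma_1$ is a closed subset of $\RR^2$, it has no isolated points (being a manifold), and each component is a properly embedded $1$-manifold. A properly embedded copy of $\RR$ in $\RR^2$ would be an unbounded curve, which would contradict the known qualitative behaviour of planar Bargmann--Fock nodal lines --- the relevant fact is that in $d=2$ the nodal set a.s.\ has only bounded components (no unbounded connected component), which is classical; see for instance the discussion around percolation of Bargmann--Fock nodal sets, and contrast with the $d \geq 3$ situation referenced in Remark~\ref{rem:fattening} via \cite{duminil}. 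Given boundedness, each component is a bounded, closed, connected smooth $1$-manifold, hence diffeomorphic to $\TT$, giving a smooth embedding $\gamma_i \colon \TT \to \RR^2$; injectivity of $\gamma_i$ is automatic from embeddedness, the $\gamma_i(\TT)$ are pairwise disjoint as distinct components of a manifold, and there are countably many since $\RR^2$ is second countable (each component contains a rational point).

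The main obstacle is the assertion that every component of $\Gamma_1$ is bounded (equivalently, that there is no unbounded nodal line). The regularity part is routine Gaussian-field theory; the topological statement that $\Gamma_1$ decomposes into smooth closed curves is then formal given boundedness of components, since a properly embedded connected $1$-manifold in the plane that is bounded must be a circle. I would either cite the existing literature on planar Bargmann--Fock nodal sets for the absence of unbounded components, or give a short self-contained argument: an unbounded component, being a properly embedded line, separates the plane (Jordan-type argument), and one can derive a contradiction with the ergodicity and the a.s.\ finiteness of the expected number of nodal domains meeting a large box together with a crossing estimate --- but invoking the known result is cleaner and is what I would do in the write-up, deferring to it and to \cite{duminil} for the dimensional dichotomy.
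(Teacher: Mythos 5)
Your proposal is correct and follows essentially the same route as the paper: non-degeneracy of $(\Psi(x),\nabla\Psi(x))$ via a Bulinskaya-type argument, the implicit function theorem to get local smooth graph structure, and an appeal to the known result (the paper cites \cite[Corollary 1.4]{beffara2017}) that all connected components of the planar Bargmann--Fock nodal set are almost surely bounded, which is exactly the input you identify as the main obstacle. The only difference is that you spell out the classification of compact connected $1$-manifolds and the countability argument, which the paper leaves implicit.
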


\begin{proof}
By \cite[Corollary 1.4]{beffara2017} there exists a null set $
\mf{N}_{1} $ such that for all $ \omega \in \mf{N}_{1}^{c} $ every connected
component of $ \Gamma_{1}(\omega), \ \mP_{1}(\omega) $ and $ \mN_{1}(\omega) $ is bounded.

Hence the proof is complete if we show that the boundary of every region is
given by a smooth closed curve. Let us first show that there exists a null set $
\mf{N}_{2} $ such that for all $ \omega \in \mf{N}_{2}^{c} $ it holds that
\begin{equ}[eqn:non-zero-grad]
| \nabla \Psi (\omega, x)| \neq 0, \qquad \forall x \in \Gamma_{1}(\omega).
\end{equ}
This follows from Bulinskaya's lemma, see for example \cite[Lemma 11.2.10]{adler2009}, as long as we
can prove the following nondegeneracy condition, namely that the map
\begin{equ}
 h \colon \Omega \times \RR^{2} \to \RR^{3}, \qquad h(\omega, x) =
(\nabla \Psi (\omega, x), \Psi(\omega, x)),
\end{equ}
satisfies that for any $ x \in \RR^{2} $ the probability measure
\begin{equ}
\PP_{x}(A) = \PP( h(x) \in A) 
\end{equ}
has a density $ p_{x}(y) $ with respect to Lebesgue measure on $
\RR^{3} $ such that for some $ C>0$:
$| p_{x}(y) | \leqslant C, \ \forall x \in \RR^{2}, \ y \in \RR^{3}.$ In our
case this condition is trivially satisfied as for every $ x \in \RR^{2} $, $
h (x) $ is distributed as a  Gaussian vector in $ \RR^{3}$ with diagonal
covariance matrix (here $ \lambda = \frac{d - 2 \alpha}{2} $ and $ C(\lambda)>0
$ is a constant):
\begin{equs}
\EE [ \partial_{i} \Psi (x) \partial_{j} \Psi (x)] & = \frac{1}{
(4 \pi \lambda)^{d} (2 \lambda)^{2}}  \int_{\RR^{2}} 
e^{ - 2 \frac{| y |^{2}}{4 \lambda}} y_{i} y_{j} \ud y = C(\lambda) 1_{\{i =
j\}}, \\
\EE [ \partial_{i} \Psi (x)\Psi (x)] & = \frac{1}{
(4 \pi \lambda)^{d} 2 \lambda}  \int_{\RR^{2}} 
e^{ - 2 \frac{| y |^{2}}{4 \lambda}} y_{i}  \ud y = 0.
\end{equs}
It follows that \eqref{eqn:non-zero-grad} holds true. An application of the implicit function
theorem allows us to deduce our result.
\end{proof}

\bibliographystyle{Martin}
\bibliography{refs}

\begin{thebibliography}{DCRRV21}
\expandafter\ifx\csname url\endcsname\relax
  \def\url#1{\texttt{#1}}\fi
\expandafter\ifx\csname urlprefix\endcsname\relax\def\urlprefix{URL }\fi
\expandafter\ifx\csname href\endcsname\relax
  \def\href#1#2{#2}\fi
\expandafter\ifx\csname burlalt\endcsname\relax
  \def\burlalt#1#2{\href{#2}{\texttt{#1}}}\fi

\bibitem[AT07]{adler2009}
\textsc{R.~J. Adler} and \textsc{J.~E. Taylor}.
\newblock \emph{Random fields and geometry}.
\newblock Springer Monographs in Mathematics. Springer, New York, 2007.

\bibitem[BG17]{beffara2017}
\textsc{V.~Beffara} and \textsc{D.~Gayet}.
\newblock Percolation of random nodal lines.
\newblock \emph{Publ. Math. Inst. Hautes \'{E}tudes Sci.} \textbf{126}, (2017),
  131--176.
\newblock \burlalt{arXiv:1605.08605}{http://arxiv.org/abs/1605.08605}.
\newblock
  \burlalt{doi:10.1007/s10240-017-0093-0}{http://dx.doi.org/10.1007/s10240-017-0093-0}.

\bibitem[Bra94]{bray}
\textsc{A.~Bray}.
\newblock Theory of phase-ordering kinetics.
\newblock \emph{Advances in Physics} \textbf{43}, no.~3, (1994), 357--459.
\newblock
  \burlalt{arXiv:cond-mat/9501089}{http://arxiv.org/abs/cond-mat/9501089}.
\newblock
  \burlalt{doi:10.1080/00018739400101505}{http://dx.doi.org/10.1080/00018739400101505}.

\bibitem[BS98]{barles1998}
\textsc{G.~Barles} and \textsc{P.~E. Souganidis}.
\newblock A new approach to front propagation problems: theory and
  applications.
\newblock \emph{Arch. Rational Mech. Anal.} \textbf{141}, no.~3, (1998),
  237--296.
\newblock
  \burlalt{doi:10.1007/s002050050077}{http://dx.doi.org/10.1007/s002050050077}.

\bibitem[DCRRV21]{duminil}
\textsc{H.~Duminil-Copin}, \textsc{A.~Rivera}, \textsc{P.-F. Rodriguez}, and
  \textsc{H.~Vanneuville}.
\newblock Existence of an unbounded nodal hypersurface for smooth {G}aussian
  fields in dimension d $\ge$ 3, 2021.

\bibitem[ES91]{evans1991}
\textsc{L.~C. Evans} and \textsc{J.~Spruck}.
\newblock Motion of level sets by mean curvature. {I}.
\newblock \emph{J. Differential Geom.} \textbf{33}, no.~3, (1991), 635--681.

\bibitem[Gra87]{Grayson87}
\textsc{M.~A. Grayson}.
\newblock The heat equation shrinks embedded plane curves to round points.
\newblock \emph{J. Differential Geom.} \textbf{26}, no.~2, (1987), 285--314.

\bibitem[IS95]{IshiiSouganidis1995}
\textsc{H.~Ishii} and \textsc{P.~Souganidis}.
\newblock Generalized motion of noncompact hypersurfaces with velocity having
  arbitrary growth on the curvature tensor.
\newblock \emph{Tohoku Math. J. (2)} \textbf{47}, no.~2, (1995), 227--250.
\newblock
  \burlalt{doi:10.2748/tmj/1178225593}{http://dx.doi.org/10.2748/tmj/1178225593}.

\bibitem[KO02]{otto2002}
\textsc{R.~V. Kohn} and \textsc{F.~Otto}.
\newblock Upper bounds on coarsening rates.
\newblock \emph{Comm. Math. Phys.} \textbf{229}, no.~3, (2002), 375--395.
\newblock
  \burlalt{doi:10.1007/s00220-002-0693-4}{http://dx.doi.org/10.1007/s00220-002-0693-4}.

\bibitem[OJK82]{OhtaKawasaki}
\textsc{T.~Ohta}, \textsc{D.~Jasnow}, and \textsc{K.~Kawasaki}.
\newblock Universal scaling in the motion of random interfaces.
\newblock \emph{Phys. Rev. Lett.} \textbf{49}, (1982), 1223--1226.
\newblock
  \burlalt{doi:10.1103/PhysRevLett.49.1223}{http://dx.doi.org/10.1103/PhysRevLett.49.1223}.

\end{thebibliography}

\end{document}